\newtheorem{theorem}{Theorem}[section]
\newtheorem*{theorem*}{Theorem}
\newtheorem{lemma}[theorem]{Lemma}
\newtheorem{proposition}[theorem]{Proposition}
\newtheorem{corollary}[theorem]{Corollary}
\theoremstyle{definition}
\newtheorem*{conjecture*}{Conjecture}
\newtheorem{remark}[theorem]{Remark}
\newtheorem{definition}[theorem]{Definition}
\renewcommand{\hat}[1]{\widehat{#1}}
\newcommand{\im}{{\rm Im}\,}
\newcommand{\End}{{\rm End}\,}
\newcommand{\Sym}{{\rm Sym}\,}
\newcommand{\Aut}{{\rm Aut}\,}
\newcommand{\irr}{{\rm Irr}\,}
\newcommand{\Irr}{{\rm Irr}\,}
\newcommand{\Z}{\mathbb{Z}}
\newcommand{\C}{\mathbb{C}}
\def\C{{\mathbb C}}
\def\R{{\mathbb R}}
\def\Z{{\mathbb Z}}
\def\1{{\bf 1}}
\def \End{{\rm End}}
\def \h{\mathfrak{h}}
\def \w{\omega}
\def \g{\mathfrak{g}}
\makeatletter \@addtoreset{equation}{section}
\begin{document}

\title[Automorphism groups of parafermion VOAs: general case]{Automorphism groups of parafermion vertex operator algebras: general case}
 \subjclass[2020]{Primary  17B69, Secondary 17B22, 20B25}
 \keywords{automorphism groups, parafermion vertex operator algebras, root systems}
\author[C.H. Lam]{Ching Hung Lam}
\address[C.H. Lam]{Institute of Mathematics, Academia Sinica, Taipei 10617, Taiwan}
\email{chlam@math.sinica.edu.tw}
\thanks{C.H. Lam was  partially supported by   NSTC grant  110-2115-M-001-003-MY3   of Taiwan}
\author[X. Lin]{Xingjun Lin}
\address[X. Lin]{School of Mathematics and Statistics, Wuhan University, Wuhan 430072,
China}
\email{linxingjun88@126.com}
\thanks{X.\ Lin was partially supported by  China NSF grant  12171371}
\author[H. Shimakura]{Hiroki Shimakura}
\address[H. Shimakura]{Department of Applied Mathematics, Faculty of Sciences, Fukuoka University, Fukuoka 814-0180, Japan }%
\email {shimakura@fukuoka-u.ac.jp}%
\thanks{H.\ Shimakura was partially supported by JSPS KAKENHI Grant Numbers JP19KK0065 and JP24K06658.}
\begin{abstract}
We complete the program for determining the full automorphism groups of all parafermion vertex operator algebras associated with simple Lie algebras and positive integral levels. 
We show that the full automorphism group of the parafermion vertex operator algebra is isomorphic to the automorphism group of the associated root system for the remaining cases: (i) the level is at least $3$; (ii) the level is $2$ and the simple Lie algebra is non simply laced.
\end{abstract}
\maketitle
\section{Introduction}

Let $\g$ be a (finite-dimensional) simple Lie algebra and let $\hat{\g}$ be the associated affine Lie algebra.
For a positive integer $k$, let $L_{\hat{\g}}(k,0)$ be the simple affine vertex operator algebra associated with ${\g}$ at level $k$. Let $\h$ be a Cartan subalgebra of $\g$. 
Then the vertex operator subalgebra generated by $\h$ is isomorphic to the Heisenberg vertex operator algebra $M_{{\h}}(k,0)$. The parafermion vertex operator algebra $K(\g,k)$ is the commutant of $M_{{\h}}(k,0)$ in $L_{\hat{\g}}(k,0)$.

For arbitrary $\g$ and $k\in\Z_{>0}$, basic properties of $K(\g,k)$ have been established; the $C_2$-cofiniteness was proved in \cite{ALY}; the rationality was proved in \cite{DR} by using \cite{CM}; the irreducible modules were classified in \cite{DR,ADJR}; the fusion products were determined in \cite{ADJR}.
However irreducible $K(\g,k)$-modules have not been described explicitly as they are realized as some substructures inside irreducible $L_{\hat{\g}}(k,0)$-modules via mirror symmetries (\cite{DW3}).
For example, the conformal weights of irreducible $K(\g,k)$-modules have not been determined yet.

The determination of the automorphism group is one of the fundamental problems in the theory of vertex operator algebras and it is natural to ask what $\Aut(K(\g,k))$ is.
By the construction, the automorphism group $\Aut(\Delta_\g)$ of the root system $\Delta_\g$ of $\g$ acts on $K(\g,k)$ as an automorphism group.
Here $\Aut(\Delta_\g)$ is the subgroup of the orthogonal group $O(\R\Delta_\g)$ preserving $\Delta_\g$.
The main question is whether $\Aut(K(\g,k))$ is obtained from $\Aut(\Delta_\g)$ or not.

For the case $\g=sl_2$ and $k\in\Z_{\ge3}$, it follows immediately from the description of generators of $K(sl_2,k)$ in \cite{DLWY} that $\Aut(K(sl_2,k))\cong\Aut(\Delta_{\g})\cong\Z/2\Z$.
For the case $\g=sl_3$ and $k\in\Z_{\ge3}$, it was proved in \cite{W} that $\Aut(K(sl_3,k))\cong\Aut(\Delta_{\g})$ based on the analysis on the weight $2$ and $3$ subspaces of $K(sl_3,k)$.
When the level $k$ is two and $\g$ is simply laced, the parafermion vertex operator algebra $K(\g,k)$ is isomorphic to the commutant $M_Q$ in $V_{\sqrt2Q}^+$ of certain Virasoro vertex operator algebra (see \cite{JLY}).
Here $Q$ is the root lattice of $\g$.
In addition, $\Aut(M_Q)$ has been determined in \cite{LSY}; $\Aut(K(\g,2))\cong\Aut(\Delta_{\g})/\{\pm1\}$ if $\g$ is not of type $E_8$ and $\Aut(K(\g,2))\cong Sp_8(2)$ if $\g$ is of type $E_8$.
Notice that $\Aut(K(\g,2))$ is larger than $\Aut(\Delta_{\g})/\{\pm1\}$ if $\g$ is of type $E_8$.

In this article, we prove that the full automorphism group of $K(\g,k)$ is isomorphic  to $\Aut(\Delta_\g)$ for all the remaining cases:

\begin{theorem}[See Theorem \ref{Thm:main2}]\label{T:intro1}  Let $\g$ be a simple Lie algebra and let $k$ be a positive integer.
Assume that one of the following holds: (1) $k\ge3$; (2) $k=2$ and $\g$ is non simply laced.
Then $\Aut(K(\g,k))\cong \Aut(\Delta_\g)$.
\end{theorem}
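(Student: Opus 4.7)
The canonical homomorphism $\Aut(\Delta_\g)\hookrightarrow\Aut(K(\g,k))$ is injective in the cases at hand, so the task is to prove surjectivity. The plan is to exploit a family of distinguished parafermion subalgebras indexed by the positive roots: for each $\alpha\in\Delta_\g^+$, the $sl_2$-triple associated with $\alpha$ generates an affine subVOA of $L_{\hat\g}(k,0)$, whose parafermion coset is a subalgebra $K_\alpha\cong K(sl_2,k_\alpha)\subset K(\g,k)$, where $k_\alpha$ is determined by the embedding index of this $sl_2$ in $\g$ (and hence by the length of $\alpha$). The collection $\{K_\alpha\}$ generates $K(\g,k)$, and the root system $\Delta_\g$ is imprinted on the mutual operator product expansions of their generators.

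I would proceed in three steps. \emph{Step 1 (intrinsic characterization):} locate the Virasoro vector $\omega^{(\alpha)}$ of $K_\alpha$ inside the Griess algebra (weight-$2$ subspace) of $K(\g,k)$, characterizing it by its central charge $c_\alpha$ and the Virasoro relation $\omega^{(\alpha)}_{(1)}\omega^{(\alpha)}=2\omega^{(\alpha)}$. Any $\sigma\in\Aut(K(\g,k))$ then permutes the set $\{\omega^{(\alpha)}\}$, inducing a map $\bar\sigma$ on $\Delta_\g^+/\{\pm1\}$. \emph{Step 2 (root-system compatibility):} the Griess product $\omega^{(\alpha)}_{(1)}\omega^{(\beta)}$, together with the fusion of the weight-$4$ primary generators of $K_\alpha,K_\beta$ supplied by \cite{DLWY} for $k\ge 3$, should detect the Cartan integer $\langle\alpha,\beta^\vee\rangle$; hence $\bar\sigma\in\Aut(\Delta_\g)$. \emph{Step 3 (reduction):} composing with the preimage of $\bar\sigma^{-1}$, we may assume $\sigma$ stabilizes each $K_\alpha$ setwise; the identification $\Aut(K(sl_2,k_\alpha))\cong\Z/2\Z$ of \cite{DLWY} and the Chevalley-type compatibility relations across $\alpha$ then force $\sigma$ to coincide with an element of $\Aut(\Delta_\g)$ (in fact the identity, after what has been subtracted).

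The main obstacle will be Step 1, namely extracting $\omega^{(\alpha)}$ intrinsically from the Griess algebra. For $k\ge 3$ the weight-$2$ subspace is small enough that an analysis parallel to Wang's treatment of $sl_3$ in \cite{W}, extended to arbitrary type and aided where necessary by the weight-$3$ subspace (which carries the spin-$3$ fields of each $K_\alpha$), should succeed. For $k=2$ and $\g$ non simply laced, a useful feature intervenes: long and short roots yield Virasoro vectors $\omega^{(\alpha)}$ of \emph{different} central charges (the long-root copy being the Ising model $L(1/2,0)$, the short-root copy having strictly larger $c$), so any automorphism automatically respects the root length. This is precisely the reason why the answer here is $\Aut(\Delta_\g)$, in contrast with the simply laced $E_8$-at-$k=2$ case whose uniform central charges permit the exotic $Sp_8(2)$ symmetry of \cite{LSY}. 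The residual subtlety — ruling out spurious Virasoro vectors of the same central charge as $\omega^{(\alpha)}$ inside the Griess algebra — will likely require a type-by-type analysis for $B_n,C_n,F_4,G_2$ at level $2$, while the level-$k\ge 3$ analysis should admit a more uniform treatment.
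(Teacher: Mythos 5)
Your strategy is genuinely different from the paper's, but as it stands it has an essential unproved step, which you yourself flag: Step 1. Everything downstream depends on showing that an arbitrary $\sigma\in\Aut(K(\g,k))$ permutes the specific set $\{\omega^{(\alpha)}\}$ of coset Virasoro vectors, and for that you need an \emph{intrinsic} characterization of these vectors inside the Griess algebra, including ruling out other idempotent-like vectors of the same central charge $c_{k_\alpha}=2(k_\alpha-1)/(k_\alpha+2)$. For $k=2$ simply laced this is exactly the hard content of \cite{LSY} (Ising vectors, $c=1/2$, Miyamoto involutions), and for $sl_3$ it is the hard content of \cite{W}; for general type and general $k$ the weight-$2$ subspace grows with the rank and no such classification of Virasoro vectors is available. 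Writing that the analysis ``should succeed'' and ``will likely require a type-by-type analysis'' leaves the main theorem unproved. There are secondary gaps as well: fixing $\omega^{(\alpha)}$ does not immediately imply that $\sigma$ stabilizes $K_\alpha$ (for $k_\alpha\ge3$ the subalgebra $K_\alpha$ is not generated by its conformal vector, and it need not equal the full commutant of the complementary Virasoro vector), and in Step 3 the independent $\Z/2\Z$'s on the various $K_\alpha$ must be shown to be correlated by the generation relations, which you do not do. Also, the injectivity of $\Aut(\Delta_\g)\to\Aut(K(\g,k))$ is asserted rather than proved; it is not automatic, since the kernel is $\{\pm1\}$ when $k=2$ and $\g$ is simply laced (the paper's Lemma \ref{L:T1} handles this via the action on $Q/kQ_L$ and Lemma \ref{L:minnorm}).

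For contrast, the paper sidesteps the Griess algebra entirely and works with the action of $\Aut(K(\g,k))$ on the set of simple current modules $\{M^{0,\beta+kQ_L}\mid\beta+kQ_L\in Q/kQ_L\}$. The key input is Theorem \ref{Thm:key}: the cosets $\beta+kQ_L$ containing a root of norm $2/t$ are exactly those whose simple current has conformal weight $1-1/(tk)$, proved by lattice-theoretic lower bounds on the ``length'' $\ell_Q(\gamma)$. Since conformal weights and fusion products are preserved under conjugation, any automorphism induces a permutation of $\Delta_\g$ (or of the short roots when $k=2$) preserving inner products modulo $k\Z$, which Lemma \ref{L:innerZ} upgrades to exact preservation, and Lemma \ref{L:symdelta} then produces an element of $\Aut(\Delta_\g)$; injectivity comes from lifting to $\Aut(L_{\hat\g}(k,0))$ (Lemma \ref{L:inj}). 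That route replaces your hardest step (classifying Virasoro vectors in the Griess algebra) by a purely combinatorial computation in root lattices, which is why it closes uniformly for all types and all $k$. If you want to salvage your approach, you would need to supply the missing classification of Virasoro vectors of central charge $c_{k_\alpha}$ in $K(\g,k)_2$, which is an open-ended problem in its own right.
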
 

As a consequence, $\Aut(K(\g,k))$ is larger than the subgroup induced from $\Aut(\Delta_\g)$ if and only if $\g$ is of type $E_8$ and $k=2$; it is related to the fact that $L_{\hat{\g}}(k,0)$ has an extra simple current module if and only if $\g$ is of type $E_8$ and $k=2$ (cf.\ \cite{L3}).

Our main strategy is to use the action of $\Aut(K(\g,k))$ on the set $\irr(K(\g,k))_{sc}$ of isomorphism classes of simple current $K(\g,k)$-modules.
One key observation is that all simple current $K(\g,k)$-modules can be realized inside $L_{\hat\g}(k,0)$ unless $\g$ is of type $E_8$ and $k=2$ (\cite{ADJR}), that is, for such $\g$ and $k$, 
$$\irr(K(\g,k))_{sc}=\{M^{0,\beta+kQ_L}\mid \beta+kQ_L\in Q/kQ_L\}, $$   where $Q$ is the root lattice of $\g$ and $Q_L$ is the sublattice of $Q$ generated by long roots (see Proposition \ref{simplecurrent}).  For the definition of $M^{0,\beta+kQ_L}$, see  \eqref{Eq:defM}. 
The simple current $K(\g,k)$-modules $M^{0,\beta+kQ_L}$ associated with roots can be characterized in terms of their conformal weights as follows:  

\begin{theorem}[See Theorem \ref{Thm:key}]\label{T:intro2}  Let $k\in\Z_{\ge2}$.
Let $\g$ be a simple Lie algebra and let $Q$ be the root lattice of $\g$.
Let $Q_L$ be the sublattice of $Q$ generated by long roots.
Let $M^{0,\beta+kQ_L}$ be the simple current $K(\g,k)$-module associated with $\beta+kQ_L\in Q/kQ_L$.
Let $r$ be the lacing number of $\g$.
Then for $t\in\{1,r\}$, $\rho(M^{0,\beta+kQ_L})=1-\dfrac{1}{tk}$ if and only if $\beta+kQ_L$ contains a root of norm $2/t$.
\end{theorem}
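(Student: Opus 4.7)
The plan is to compute $\rho(M^{0,\beta+kQ_L})$ by realizing the simple current $K(\g,k)$-module inside $L_{\hat{\g}}(k,0)$. Decomposing $L_{\hat{\g}}(k,0)$ under the joint action of the Heisenberg VOA $M_{\h}(k,0)$ and the parafermion VOA $K(\g,k)$, the $\h$-weight-$\gamma$ subspace splits as $L_{\hat{\g}}(k,0)_{\gamma}\cong M_{\h}(k,\gamma)\otimes U_{\gamma}$, where $U_{\gamma}$ is a graded subspace of the simple current $M^{0,\gamma+kQ_L}$. Consequently,
$$
\rho(M^{0,\beta+kQ_L})= \min_{\gamma \in \beta + kQ_L} \Bigl( h^{L}_{\gamma} - \frac{\langle \gamma,\gamma\rangle}{2k}\Bigr),
$$
where $h^{L}_{\gamma}\in \Z_{\geq 0}$ is the minimum conformal weight of a weight-$\gamma$ vector in $L_{\hat{\g}}(k,0)$. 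A crucial fact is that $h^{L}_{\gamma}\geq 1$ for $\gamma\neq 0$, with equality exactly when $\gamma$ is a root (realized by $e_{\gamma}$), since the conformal-weight-one subspace of $L_{\hat{\g}}(k,0)$ is precisely $\g$.

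The direction $(\Leftarrow)$ then supplies the upper bound $\rho\leq 1-1/(tk)$: a root $\alpha\in\beta+kQ_L$ of norm $2/t$ contributes $f(\alpha):=h^{L}_{\alpha}-\langle\alpha,\alpha\rangle/(2k)=1-1/(tk)$ to the minimum. The matching lower bound and the converse direction both follow from a single claim: whenever $\gamma\in\beta+kQ_L$ satisfies $f(\gamma)\leq 1-1/(tk)$, the coset $\beta+kQ_L$ must contain a root of norm $2/t$. The case $h^{L}_{\gamma}=1$ is immediate, since then $\gamma$ itself is a root of some norm $2/s$ with $s\in\{1,r\}$, and the inequality $1-1/(sk)\leq 1-1/(tk)$ combined with $k\geq 2$ pins down $s=t$.

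The technical core is the case $h^{L}_{\gamma}\geq 2$. Here $f(\gamma)\leq 1-1/(tk)$ forces $\langle\gamma,\gamma\rangle\geq 2k+2/t$, placing $\gamma$ in a large-norm regime. I plan a coset-reduction argument in $Q/kQ_L$: by subtracting a suitable $k\mu$ with $\mu\in Q_L$, one replaces $\gamma$ by $\gamma'\in\gamma+kQ_L$ with strictly smaller norm, and iterates until reaching a minimum-norm representative. Since $\rho>0$ forces the coset to be non-trivial, this minimum-norm element is non-zero; I then argue that it must be a root of norm $2/t$, producing the required root in the coset.

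The main obstacle is this lattice-geometric step: one must verify, uniformly across all Cartan types and all $k\geq 2$, that a coset admitting any $\gamma$ with $h^{L}_{\gamma}\geq 2$ and $f(\gamma)\leq 1-1/(tk)$ necessarily contains a root of norm $2/t$. The non-simply-laced types are the most delicate because of the richer interaction between $Q$ and $Q_L$: short and long roots live in different cosets, and one must control the possibility that large-norm non-root elements with small parafermion contribution arise in cosets that do not contain the expected root. I expect this to be resolved by a clean norm-reduction argument coupled with a short case analysis across the Cartan types of $\g$.
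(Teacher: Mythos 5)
Your framework coincides with the paper's: the decomposition $L_{\hat\g}(k,0)(\gamma)\cong M_{\h}(k,\tfrac{1}{\sqrt k}\gamma)\otimes M^{0,\beta+kQ_L}$ of \eqref{Eq:confwt1} (where the second tensor factor is the \emph{whole} simple current module rather than a graded subspace, so that $h^L_\gamma-\langle\gamma,\gamma\rangle/(2k)$ equals $\rho(M^{0,\beta+kQ_L})$ for \emph{every} $\gamma$ in the coset and your minimum is over constant terms), the characterization $h^L_\gamma=1\Leftrightarrow\gamma\in\Delta_\g$, and hence the ``if'' direction and the reduction of ``only if'' to representatives with $h^L_\gamma\ge2$; all of this matches Proposition \ref{P:lowest}.

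The gap is that everything after that point --- the step you describe as ``a clean norm-reduction argument coupled with a short case analysis'' --- is the actual content of the theorem, and your sketch lacks the one ingredient that makes it work: a lower bound on $h^L_\gamma$ that grows with $\gamma$. The paper uses the length $\ell_Q(\gamma)$ (the minimal number of roots summing to $\gamma$): since $L_{\hat\g}(k,0)(\gamma)$ is spanned by monomials $h_1(-n_1)\cdots E_{\alpha_{s+1}}(-n_{s+1})\cdots E_{\alpha_t}(-n_t)\1$ with $\sum\alpha_i=\gamma$, one has $h^L_\gamma\ge\ell_Q(\gamma)$; coordinate-wise lower bounds on $\ell_Q$ (Propositions \ref{P:length}, \ref{P:lengthE}) combined with reduction to representatives whose coordinates are bounded by roughly $k/2$ or $k-1$ (Lemmas \ref{L:upperAD}, \ref{L:keyB}, \ref{L:keyC}, \ref{L:upperE}, \ref{L:upperE6}, \ref{L:keyF}, \ref{L:keyG}) then give $\ell_Q(\gamma)-\langle\gamma,\gamma\rangle/(2k)\ge1-1/(tk)$, and the analysis of the equality case (via Lemma \ref{L:lformula}) is what actually forces a root of norm $2/t$ into the coset. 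Without some such quantitative bound your plan does not close: from $h^L_\gamma\ge\ell_Q(\gamma)=m$ and the crude estimate $|\gamma|^2\le2m^2$ one only contradicts $|\gamma|^2\ge2k(m-1)+2/t$ when $2\le m\le k-2$, so representatives of larger length survive, and nothing in your argument rules out a non-root minimum-norm representative achieving the critical value. Note also that the target is not ``the minimum-norm representative is a root'' but ``the coset contains a root of norm $2/t$''; in several cases (type $D_n$ with two nonzero coordinates, the half-integral vectors in types $E_n$ and $F_4$) the paper extracts the root from the equality conditions rather than from norm minimality. The type-by-type analysis, especially for $E_6,E_7,E_8$ where the Hamming-code structure of the roots enters, is the proof, not a finishing touch.
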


For the proof of Theorem \ref{T:intro2}, we consider the charge $\gamma$ subspace
\[ 
L_{\hat\g}(k,0)(\gamma):=\{v\in L_{\hat\g}(k,0)\mid h(0)v=\langle h, \gamma\rangle v\ \text{for all}\ h\in\mathfrak{h}\}
\] of $L_{\hat\g}(k,0)$ with respect to $\mathfrak{h}$, where $\h$ is a (fixed) Cartan subalgebra and $\gamma\in Q$. 
By the construction of $L_{\hat{\g}}(k,0)$, 
the conformal weights of elements in $L_{\hat\g}(k,0)(\gamma)$ are at least $\ell=\ell_Q(\beta)$, which is  
the minimum number $\ell$ such that the sum of $\ell$ elements of $\Delta_\g$ is $\beta$; we call it the \emph{length} (see Definition \ref{Def:length}). 
Some lower bounds of the length $\ell_Q(\beta)$ can be obtained based on the standard descriptions of $\Delta_\g$ for all root systems.
By the construction of $M^{0,\beta+kQ_L}$, the conformal weight of $M^{0,\beta+kQ_L}$ is at least $\ell_Q(\gamma)-\langle\gamma,\gamma\rangle/2$ for any $\gamma\in \beta+kQ_L$.
Then, choosing a suitable representative $\gamma$ of $\beta+kQ_L$ and evaluating $\ell_Q(\gamma)-\langle\gamma,\gamma\rangle/2$, 
we can prove Theorem \ref{T:intro2}.

Next we explain our proof of Theorem \ref{T:intro1} when $\g$ is simply laced and $k\ge3$.
Let $$\varphi:\Aut(\Delta_\g)\to \Aut(K(\g,k))$$ be the group homomorphism mentioned before.
By the description of $\irr(K(\g,k))_{sc}$, $\varphi$ is injective if $\Aut(\Delta_\g)$ acts faithfully on $Q/kQ_L$ (cf. Lemma \ref{L:inj0}).  
By $k\ge3$, any element in $Q/kQ_L$ contains at most one root, which implies that $\varphi$ is injective.

Since $\Aut(\Delta_\g)$ is finite, it suffices to construct an injective group homomorphism from $\Aut(K(\g,k))$ to $\Aut(\Delta_\g)$.
By Theorem \ref{T:intro2}, the action of $\sigma\in \Aut(K(\g,k))$ on $\irr(K(\g,k))_{sc}$ induces a permutation $\tilde{\sigma}$ on the set $\Delta_\g$ since any element in $Q/kQ_L$ contains at most one root.
Since the action of $\sigma$ on $\irr(K(\g,k))_{sc}$ preserves the fusion product, $\tilde{\sigma}$ preserves the inner products on $\Delta_\g$.
This $\tilde{\sigma}$ can be extended to a linear automorphism of $\R \Delta_\g$ preserving the inner product, which implies $\tilde{\sigma}\in \Aut(\Delta_\g)$ (see Lemma \ref{L:symdelta}).
Hence we obtain a group homomorphism from $\Aut(K(\g,k))$ to $\Aut(\Delta_\g)$.
This is also injective; indeed, if $\sigma\in\Aut(K(\g,k))$ fixes every elements in $\Delta_\g$, then $\sigma$ acts trivially on $\irr(K(\g,k))_{sc}$.
Such an element can be obtained from the restriction of the stabilizer of $\h$ in $\Aut(L_{\hat{\g}}(k,0))$.
This stabilizer acts faithfully on $K(\g,k)$ as $\Aut(\Delta_\g)$.
Hence $\sigma=id$.
Therefore we obtain an injective group homomorphism from $\Aut(K(\g,k))$ to $\Aut(\Delta_\g)$.
When $\g$ is non simply laced, we can use a similar argument for the subset consisting of all short roots in $\Delta_\g$.

This article is organized as follows: In Section 2, we recall some basic facts about vertex operator algebras and parafermion vertex operator algebras. In Section 3, we introduce the notion of \emph{length} and study the length of the elements in the root lattice associated with a root system. In Section 4, we study the conformal weights of simple current $K(\mathfrak{g},k)$-modules and prove one of our key results in Theorem \ref{Thm:key}. In Section 5, we prove our main result in Theorem \ref{Thm:main2} and determine the automorphism groups of parafermion vertex operator algebras.

\begin{center}
{\bf Notations}
\begin{small}
\begin{longtable}{ll} 
$\langle\cdot,\cdot\rangle$& 
the normalized Killing form of $\g$, i.e., $\langle\theta, \theta\rangle=2$ for the highest root $\theta$ \\ & 
or the inner product on the Euclidean space.\\
$|\alpha|^2$& the (squared) norm of a vector $\alpha$, that is, $|\alpha|^2=\langle\alpha,\alpha\rangle$\\
$\Aut(\Delta_\g)$&the subgroup of the orthogonal group $O(\R\Delta_\g)$ preserving $\Delta_\g$.\\
$\Delta_{X_n}$,$\Delta_\g$& the set of all roots in the root system of type $X_n$ or of $\g$.\\
$\g$& a (finite-dimensional) simple Lie algebra over $\C$.\\
$\h$& a Cartan subalgebra of $\g$.\\
$\irr(K(\g,k))_{sc}$& the set of all isomorphism classes of simple current $K(\g,k)$-modules.\\
$K(\g,k)$& the parafermion vertex operator algebra associated with $\g$ and $k\in\Z_{>0}$.\\
$\ell_Q(\beta)$& the length of $\beta$ in a root lattice $Q$ with respect to $\Delta_\g$ (see Definition \ref{Def:length}).\\
$L_{\hat{\g}}(k,0)$& the simple affine vertex operator algebra associated with $\g$ and $k\in\Z_{>0}$.\\
$M_{{\h}}(k,0)$& the Heisenberg vertex operator algebra generated by $\h$.\\
$\rho(M)$& the conformal weight of a module $M$.\\
$Q_L$& the sublattice of a root lattice $Q$ generated by long roots.\\
$V_{\sqrt{k}Q_L}$& the lattice vertex operator algebra associated with the even lattice $\sqrt{k}Q_L$.\\
$\Z_{\ge a}$&the set of all integers at least $a$.\\
\end{longtable}
\end{small}
\end{center}

\section{Preliminaries}

\subsection{Root systems and their automorphism groups}
In this subsection, we discuss some results related to root systems, which will be used later.
For the definition of root systems, see \cite[Chapter III]{H}.

Let $\Delta$ be a root system.
Let $\langle\cdot,\cdot\rangle$ be the positive-definite bilinear form on the Euclidean space $\R\Delta$ spanned by $\Delta$.
Let $Q$ be the root lattice generated by $\Delta$ and let $Q_L$ be the sublattice generated by long roots of $\Delta$.
For $\gamma\in Q$, the value $|\gamma|^2=\langle\gamma,\gamma\rangle$ is called the (squared) \emph{norm} of $\gamma$.

\begin{lemma}\label{L:minnorm} Let $\Delta$ be an irreducible root system.
Assume that the norm of any long root of $\Delta$ is $2$.
Let $k$ be a positive integer.
\begin{enumerate}[{\rm (1)}]
\item If $k\ge3$, then for $\alpha\in\Delta$, $(\alpha+kQ_L)\cap\Delta=\{\alpha\}$.
\item If $k=2$ and the type of $\Delta$ is non simply laced, then for a short root $\alpha\in\Delta$, $(\alpha+kQ_L)\cap\Delta=\{\alpha\}$.
\end{enumerate}
\end{lemma}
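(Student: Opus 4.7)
The plan is a direct norm-comparison argument. Suppose for contradiction that there exists $\beta \in (\alpha + kQ_L) \cap \Delta$ with $\beta \neq \alpha$, so $\beta - \alpha = k\gamma$ for some nonzero $\gamma \in Q_L$.

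The first ingredient is that every nonzero element of $Q_L$ has squared norm at least $2$. This is immediate from the classification: with the normalization $|\text{long root}|^2 = 2$, one has $Q_L = Q$ in the simply laced cases, and $Q_L$ is (isometric to) a root lattice of type $D_n$, $nA_1$, $D_4$, $A_2$ when $\Delta$ is of type $B_n$, $C_n$, $F_4$, $G_2$ respectively, each of which has minimum squared norm $2$. Consequently,
\[
|\beta - \alpha|^2 \;=\; k^2 |\gamma|^2 \;\geq\; 2k^2.
\]

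The second ingredient is the elementary triangle inequality $|\beta - \alpha|^2 \leq (|\alpha| + |\beta|)^2$. Since every root of $\Delta$ has squared norm at most $2$, this yields $(|\alpha| + |\beta|)^2 \leq 8$. For part (1), $k \geq 3$ gives $2k^2 \geq 18 > 8$, contradicting the lower bound. For part (2), the short-root hypothesis gives $|\alpha|^2 < 2$ while $|\beta|^2 \leq 2$, so the upper bound is strict: $(|\alpha| + |\beta|)^2 < (\sqrt{2} + \sqrt{2})^2 = 8 = 2k^2$, again contradicting the lower bound.

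The only moderately delicate point is the case-by-case verification that $Q_L$ has minimum squared norm $2$, but this follows directly from inspection of the standard descriptions of the root systems. Everything else is an elementary norm estimate, and the two cases differ only in whether the strict inequality comes from $k \geq 3$ or from $\alpha$ being short.
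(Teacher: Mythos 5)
Your proof is correct and rests on the same two inputs as the paper's: every nonzero vector of $Q_L$ has squared norm at least $2$, while every root has squared norm at most $2$ (strictly less for a short root). The only cosmetic difference is that you package the norm comparison via the triangle inequality $|\beta-\alpha|\le|\alpha|+|\beta|$, whereas the paper expands $|\alpha+k\beta|^2-|\alpha|^2=2k\langle\alpha,\beta\rangle+k^2|\beta|^2$ and bounds $\langle\alpha,\beta\rangle$ by Cauchy--Schwarz; both amount to the same estimate, so this is essentially the paper's argument.
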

\begin{proof} 
Let $\alpha\in\Delta$ and let $\beta\in Q_L$ with $\beta\neq0$.
Then $|\alpha|^2\le|\beta|^2$.
Since the norm of any vector in $\alpha+kQ_L$ belongs to $|\alpha|^2+2k\Z$,  we have 
\[
(\alpha+kQ_L)\cap\Delta\subset\{\gamma\in \alpha+kQ_L\mid |\gamma|^2=|\alpha|^2\}.
\]
It follows from $|\langle\alpha,\beta\rangle|\le |\beta|^2$ and $k\ge2$ that $|\alpha+k\beta|^2-|\alpha|^2=k\langle\beta,2\alpha+k\beta\rangle=0$ if and only if $k=2$ and $\alpha=-\beta$.
If $k\ge3$, then $\alpha+k\beta\notin\Delta$, which shows (1).
If $k=2$, then $\Delta$ is non simply laced and $\alpha$ is a short root by the assumptions.
Hence $\alpha\neq-\beta\in Q_L$, and $\alpha+k\beta\notin\Delta$, which shows (2). 
\end{proof}

Recall that the automorphism group $\Aut(\Delta)$ of $\Delta$ is given by 
\begin{equation}
\Aut(\Delta)=\{g\in O(\R\Delta)\mid g(\Delta)=\Delta\},\label{Eq:defautr}
\end{equation}
where $O(\R\Delta)$ is the orthogonal group of $\R\Delta$ preserving $\langle\cdot,\cdot\rangle$.
The following lemma is immediate from \cite[Section 12.2]{H} and the explicit description of root systems of non simply laced type (cf.\ \cite[Section 12.1]{H}).
\begin{lemma}\label{L:auts} Let $\Delta$ be an irreducible root system of non simply laced type.
Let $\Delta_s$ be the (scaled) root system consisting of all short roots in $\Delta$.
\begin{enumerate}[{\rm (1)}]
\item If the type of $\Delta$ is $B_n,C_n,F_4,G_2$ $(n\ge2)$, then that of $\Delta_s$ is $A_1^n$, $D_n$, $D_4$, $A_2$, respectively, where $D_2=A_1^2$ and $D_3=A_3$.
\item $\Aut(\Delta_s)\cong\Aut(\Delta)$ if and only if the type of $\Delta$ is not $C_4$.
\item If the type of $\Delta$ is $C_4$, then $\Aut(\Delta)$ is an index $3$ subgroup of $\Aut(\Delta_s)$.
\end{enumerate}
\end{lemma}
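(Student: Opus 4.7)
The plan splits the statement into its combinatorial part (1) and its group-theoretic part (2)–(3), which I would attack by introducing a single restriction map and then comparing orders.

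For part (1), I would simply consult the standard coordinate realizations of the non simply laced root systems in \cite[Section 12.1]{H} and read off the short roots. For $B_n$ the short roots $\{\pm e_i\}$ form an orthogonal $A_1^n$; for $C_n$ the short roots $\{\pm e_i\pm e_j\}$ form $D_n$ (using $D_2=A_1^2$ and $D_3=A_3$ in low rank); for $F_4$ the short roots $\{\pm e_i\}\cup\{\tfrac12(\pm e_1\pm e_2\pm e_3\pm e_4)\}$ give a rescaled $D_4$; and for $G_2$ one identifies the three pairs of short roots as an $A_2$. No real argument is needed beyond inspection.

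For parts (2) and (3), I would introduce the restriction map
\[
\rho\colon\Aut(\Delta)\longrightarrow\Aut(\Delta_s),\qquad g\mapsto g|_{\Delta_s}.
\]
Any $g\in\Aut(\Delta)\subseteq O(\R\Delta)$ preserves norms, hence preserves $\Delta_s$ and restricts to an element of $\Aut(\Delta_s)$; this makes $\rho$ a well-defined homomorphism. In every case of (1) the short roots span $\R\Delta$, so an isometry acting trivially on $\Delta_s$ is the identity on $\R\Delta$, giving injectivity of $\rho$. Combined with the standard orders
\begin{align*}
|\Aut(B_n)|&=|\Aut(C_n)|=2^nn!=|\Aut(A_1^n)|,\\
|\Aut(D_n)|&=2^nn!\ (n\ne 4),\qquad |\Aut(D_4)|=3\cdot 2^4\cdot 4!=1152,\\
|\Aut(F_4)|&=1152,\qquad |\Aut(G_2)|=12=|\Aut(A_2)|,
\end{align*}
matching each case of (1) with the corresponding order forces $\rho$ to be an isomorphism in every case except $\Delta=C_4$, where its image has index $|\Aut(D_4)|/|\Aut(C_4)|=1152/384=3$. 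This simultaneously yields (2) and (3).

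The one nontrivial input, and the main obstacle, is the value $|\Aut(D_4)|=1152$: one must invoke triality to see that $\Aut(D_4)/W(D_4)\cong S_3$ rather than the $\Z/2\Z$ appearing for every other $D_n$. This triality is exactly what creates the index-$3$ anomaly for $C_4$, since the extra outer-automorphism cosets of $D_4$ do not preserve the long-root set $\{\pm 2e_i\}$ of $C_4$ and therefore cannot lie in the image of $\rho$. Once this is known, everything else is a routine order comparison.
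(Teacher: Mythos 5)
Your proof is correct and follows essentially the route the paper intends: the paper states this lemma without proof, declaring it immediate from the explicit coordinate descriptions of the non simply laced root systems and the known automorphism groups in \cite[Sections 12.1--12.2]{H}, which is exactly the content you spell out via the injective restriction map $\Aut(\Delta)\to\Aut(\Delta_s)$ and the order comparison, with triality for $D_4$ accounting for the index-$3$ exception at $C_4$.
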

 
Let $\Sym\Delta$ denote the permutation group on $\Delta$.
The following lemma is probably well-known; however, we include a proof for completeness.

\begin{lemma}\label{L:symdelta} Let $\Delta$ be a (not necessarily irreducible) root system of rank $n$.
Let $g\in\Sym \Delta$ such that $\langle g(\alpha_1),g(\alpha_2)\rangle=\langle\alpha_1,\alpha_2\rangle\ \text{for all}\ \alpha_1,\alpha_2\in\Delta$.
Then there exists an element in $\Aut(\Delta)$ whose restriction to $\Delta$ is $g$. 
In other words, $g$ can be extended to an element in $\Aut(\Delta)$.
\end{lemma}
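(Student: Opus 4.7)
The plan is to build the desired extension $G\in\Aut(\Delta)$ by fixing a basis of $\R\Delta$ taken from $\Delta$ itself, defining $G$ by linearity on this basis via $g$, and then verifying that (a) $G$ preserves the inner product and (b) $G$ agrees with $g$ on all of $\Delta$, not just on the chosen basis.

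\medskip

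First I would choose $\alpha_1,\dots,\alpha_n\in\Delta$ that form a basis of $\R\Delta$; this is possible because $\Delta$ spans $\R\Delta$. Then I would define $G\colon \R\Delta\to\R\Delta$ to be the unique linear map sending $\alpha_i$ to $g(\alpha_i)$ for $i=1,\dots,n$. The hypothesis $\langle g(\alpha_i),g(\alpha_j)\rangle=\langle\alpha_i,\alpha_j\rangle$ says that the Gram matrix of $(g(\alpha_1),\dots,g(\alpha_n))$ equals that of $(\alpha_1,\dots,\alpha_n)$. In particular, since $(\alpha_1,\dots,\alpha_n)$ is a basis, its Gram matrix is non-singular, so $(g(\alpha_1),\dots,g(\alpha_n))$ is also linearly independent, hence a basis of $\R\Delta$. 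Comparing Gram matrices on a basis gives that $G$ preserves $\langle\cdot,\cdot\rangle$, so $G\in O(\R\Delta)$.

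\medskip

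The main step — and the only place where the hypothesis on all of $\Delta$ (not just on a basis) is used — is to check that $G(\alpha)=g(\alpha)$ for every $\alpha\in\Delta$. Given $\alpha\in\Delta$, write $\alpha=\sum_i c_i\alpha_i$. Then for each $j$,
\[
\langle g(\alpha), g(\alpha_j)\rangle=\langle\alpha,\alpha_j\rangle=\sum_i c_i\langle\alpha_i,\alpha_j\rangle=\sum_i c_i\langle g(\alpha_i),g(\alpha_j)\rangle=\langle G(\alpha), g(\alpha_j)\rangle,
\]
using the hypothesis twice. Since $(g(\alpha_1),\dots,g(\alpha_n))$ is a basis and $\langle\cdot,\cdot\rangle$ is non-degenerate, this forces $g(\alpha)=G(\alpha)$.

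\medskip

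Finally, because $g$ is a permutation of $\Delta$, we have $G(\Delta)=g(\Delta)=\Delta$, so $G$ lies in $\Aut(\Delta)$ by the definition \eqref{Eq:defautr}, and its restriction to $\Delta$ is $g$ by the previous paragraph. The only subtlety I expect is being careful that the linear independence of $(g(\alpha_i))$ comes for free from the Gram-matrix argument, so no separate case analysis on the type of $\Delta$ is needed; the proof is uniform in the (possibly reducible) root system.
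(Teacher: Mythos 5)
Your proof is correct, and it takes a genuinely different route from the paper's. The paper works root-theoretically: it first derives the additivity relations $g(\alpha_1+\alpha_2)=g(\alpha_1)+g(\alpha_2)$ (for $\alpha_1+\alpha_2\in\Delta$) and $g(-\alpha)=-g(\alpha)$ from non-degeneracy of the form, then fixes a set of simple roots, invokes the fact that every positive root can be reached by successively adding simple roots (Lemma A in Section 10.2 of Humphreys), and runs an induction on the height to show that $g$ agrees on all of $\Delta$ with the linear map defined on the simple-root basis. You instead pick an arbitrary basis of $\R\Delta$ contained in $\Delta$, define $G$ by linearity, and verify $G(\alpha)=g(\alpha)$ for every $\alpha\in\Delta$ in one stroke by showing $g(\alpha)-G(\alpha)$ is orthogonal to the basis $(g(\alpha_j))$ — whose linear independence you correctly extract from the Gram-matrix identity. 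Both arguments ultimately rest on the same non-degeneracy trick, but yours uses no root-system structure at all (no simple roots, no height induction, no citation of Humphreys), so it applies verbatim to any finite spanning subset of a Euclidean space equipped with an inner-product-preserving permutation; what the paper's version buys is only that it stays within the standard toolkit of root systems. The one step worth stating explicitly is that positive definiteness of $\langle\cdot,\cdot\rangle$ makes the common Gram matrix non-singular, which you do note; everything else is routine bilinearity.
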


\begin{proof} 
Let $\alpha_1,\alpha_2,\alpha\in\Delta$ such that $\alpha_1+\alpha_2\in\Delta$.
By the assumption on $g$, we have $\langle g(\alpha_1+\alpha_2)-g(\alpha_1)-g(\alpha_2),g(\gamma)\rangle
= \langle \alpha_1+\alpha_2, \gamma\rangle - \langle \alpha_1, \gamma\rangle
- \langle \alpha_2, \gamma\rangle
=0$ for any $\gamma\in\Delta$.
Similarly, we have $\langle g(-\alpha) +g(\alpha),g(\gamma)\rangle=0$ for any $\gamma\in\Delta$.
Since $\langle\cdot,\cdot\rangle$ is non-degenerate on $\R\Delta$, we have 
\begin{equation}
g(\alpha_1+\alpha_2)=g(\alpha_1)+g(\alpha_2)\quad \text{ and } \quad  g(-\alpha)= - g(\alpha).\label{Eq:innersigma}
\end{equation} 

Let $\Phi=\{\beta_1,\dots,\beta_n\}$ be a set of simple roots of $\Delta$.
For any positive (but not simple) root $\alpha$, there exists $\beta_i\in\Phi$ such that $\alpha-\beta_i\in\Delta$ (see \cite[Lemma A in Section 10.2]{H}).
Let $\gamma=\sum_{i=1}^nc_i\beta_i\in\Delta$.
If $\gamma$ is positive, then by \eqref{Eq:innersigma} and an induction on the height, we obtain 
\begin{equation}
g(\gamma)=\sum_{i=1}^nc_ig(\beta_i).\label{Eq:innersigma2}
\end{equation}
If $\gamma$ is negative, then $-\gamma$ is positive.
By \eqref{Eq:innersigma} and \eqref{Eq:innersigma2}, we have
\begin{equation}
g(\gamma)= -g(-\gamma)=  \sum_{i=1}^n c_ig(\beta_i).\label{Eq:innersigma3}
\end{equation}

Now, we define the linear isomorphism $\tilde{g}$ of $\R\Delta$ by $\tilde{g}(\sum_{i=1}^nc_i\beta_i)=\sum_{i=1}^nc_ig(\beta_i)$, $(c_i\in\R)$.
By \eqref{Eq:innersigma2} and \eqref{Eq:innersigma3} , $\tilde{g}(\gamma)=g(\gamma)$ for any $\gamma\in\Delta$;
 in particular,  $\tilde{g}(\Delta)=\Delta$.
In addition, $\tilde{g}$ preserves the inner product on $\R\Delta$.
Hence $\tilde{g}\in\Aut(\Delta)$ and its restriction to $\Delta$ is $g$.
\end{proof}

\begin{lemma}\label{L:innerZ} Let $\Delta$ be an irreducible root system and let $k\in\Z_{\ge2}$.
Assume that the norm of any long root in $\Delta$ is $2$.
Let $g\in\Sym\Delta$ and let $\alpha_1,\alpha_2\in\Delta$ such that
\begin{align}
\dfrac{\langle\alpha_1,\alpha_2\rangle}{k}\equiv\dfrac{\langle g(\alpha_1),g(\alpha_2)\rangle}{k}\pmod\Z\label{Eq:Z},\\
g(-\alpha_i)=-g(\alpha_i)\quad (i=1,2).\label{Eq:-}
\end{align}
Furthermore, we assume that one of the following holds:
\begin{enumerate}[{\rm (1)}]
\item $k\ge3$;
\item $k=2$, $\Delta$ is of non simply raced type and $\alpha_1,\alpha_2$ are short roots. 
\end{enumerate}
Then $\langle\alpha_1,\alpha_2\rangle=\langle g(\alpha_1),g(\alpha_2)\rangle$.
\end{lemma}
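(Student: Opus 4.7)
The plan is to set $d:=\langle\alpha_1,\alpha_2\rangle-\langle g(\alpha_1),g(\alpha_2)\rangle$, which lies in $k\Z$ by \eqref{Eq:Z}, and then to bound $|d|$ until it is forced to be zero. The coarse Cauchy--Schwarz bound $|\langle\alpha,\beta\rangle|\le|\alpha|\,|\beta|\le 2$ gives only $|d|\le 4$, which is not sharp enough; the refinement is to rule out the extreme inner products $\pm 2$ first (using the permutation/involution structure of $g$), and then to invoke Cartan integrality to cut $|d|$ down to $\le 2$, after which the hypotheses on $k$ close the argument.

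I would first handle the four ``extreme'' cases in which one of $\langle\alpha_1,\alpha_2\rangle$, $\langle g(\alpha_1),g(\alpha_2)\rangle$ equals $\pm 2$. Since long roots have norm $2$, the Cauchy--Schwarz equality $|\langle\alpha,\beta\rangle|=2$ forces $\alpha=\pm\beta$ (with both long). For example, if $\langle g(\alpha_1),g(\alpha_2)\rangle=-2$, then $g(\alpha_1)=-g(\alpha_2)=g(-\alpha_2)$ by \eqref{Eq:-}; since $g$ is a permutation, it is injective, so $\alpha_1=-\alpha_2$ and $\langle\alpha_1,\alpha_2\rangle=-|\alpha_1|^2\in\{-2,-2/r\}$. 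The two candidate values of $d$ are $0$ and $2(r-1)/r\in\{1,4/3\}$, and only $0$ belongs to $k\Z$ (for $k\ge 2$); hence $d=0$. The three symmetric extreme cases are identical. Cartan integrality (with respect to a long root) then gives $|\langle\alpha,\beta\rangle|\le 1$ for any $\alpha\ne\pm\beta$ in $\Delta$, so from now on we may assume $|d|\le 2$.

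In case (1), $k\ge 3$ immediately forces $d\in k\Z\cap[-2,2]=\{0\}$. In case (2), $\alpha_1,\alpha_2$ are short of norm $2/r$, giving the sharper bound $|\langle\alpha_1,\alpha_2\rangle|\le 2/r$; for $r=3$ (type $G_2$), $|d|\le 2/r+1=5/3<2$ and $d\in 2\Z$ forces $d=0$. For $r=2$ (types $B_n,C_n,F_4$), the only remaining possibility is $d=\pm 2$, which, combined with $|\langle\alpha_1,\alpha_2\rangle|,|\langle g(\alpha_1),g(\alpha_2)\rangle|\le 1$, pins $\langle\alpha_1,\alpha_2\rangle=\pm 1$ and $\langle g(\alpha_1),g(\alpha_2)\rangle=\mp 1$; since $\alpha_1,\alpha_2$ are unit vectors, $\langle\alpha_1,\alpha_2\rangle=\pm 1$ means $\alpha_1=\pm\alpha_2$, so $g(\alpha_1)=\pm g(\alpha_2)$ via \eqref{Eq:-} and the injectivity of $g$, forcing $\langle g(\alpha_1),g(\alpha_2)\rangle=\pm|g(\alpha_1)|^2$ to have the same sign as $\langle\alpha_1,\alpha_2\rangle=\pm 1$ rather than the opposite sign $\mp 1$, a contradiction. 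I expect this last sub-case to be the main obstacle, since it is the pinch point where one must simultaneously invoke \eqref{Eq:-} and the injectivity of $g$ (which uses that $g$ is a permutation, not merely a map) to convert a unit-vector inner product $\pm 1$ into the identification $\alpha_1=\pm\alpha_2$ and extract the sign contradiction; the other steps are routine.
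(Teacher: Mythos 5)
Your proof is correct and follows essentially the same route as the paper's: both arguments exploit that the difference of the two inner products lies in $k\Z$ while root inner products are bounded and discrete, and both eliminate the surviving extreme cases (inner product $\pm2$, and the $k=2$, $r=2$ value $\pm1$) by using Cauchy--Schwarz equality to force $\alpha_1=\pm\alpha_2$ (or $g(\alpha_1)=\mp g(\alpha_2)$) and then deriving a sign contradiction from \eqref{Eq:-} together with the injectivity of $g$. The paper merely organizes this as an explicit enumeration of four bad cases rather than via the difference $d$, so the two proofs are the same in substance.
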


\begin{proof} Let $r$ be the lacing number of $\Delta$.
Then $\langle\alpha_1,\alpha_2\rangle\in\{0,\pm1,\pm2\}$ (resp. $\langle\alpha_1,\alpha_2\rangle\in\{0,\pm1/r,\pm2/r\}$) if $\alpha_1$ or $\alpha_2$ is a long root (resp. both $\alpha_1$ and $\alpha_2$ are short roots).

Suppose, for a contradiction, that the desired equation $\langle\alpha_1,\alpha_2\rangle=\langle g(\alpha_1),g(\alpha_2)\rangle$ does not hold.
By the equation \eqref{Eq:Z}, one of the four cases holds; (i) $k=4$, $\langle\alpha_1,\alpha_2\rangle=\pm2$ and $\langle g(\alpha_1),g(\alpha_2)\rangle=\mp2$; (ii) $k=3$, $\langle\alpha_1,\alpha_2\rangle=\pm2$ and $\langle g(\alpha_1),g(\alpha_2)\rangle=\mp1$; (iii) $k=3$, $\langle\alpha_1,\alpha_2\rangle=\pm1$ and $\langle g(\alpha_1),g(\alpha_2)\rangle=\mp2$; (iv) $k=2$, $r=2$ and $\langle\alpha_1,\alpha_2\rangle=\pm1$, $\langle g(\alpha_1),g(\alpha_2)\rangle=\mp1$.
In the cases (i), (ii) and (iv),the former equation implies $\alpha_1=\pm\alpha_2$, which contradicts the latter equation.
In the case (iii), the latter equation implies $g(\alpha_1)=\mp g(\alpha_2)$.
By using \eqref{Eq:-} if necessary, we obtain $\alpha_1=\mp\alpha_2$, which contradicts the former equation.
Therefore we obtain the desired equation.
\end{proof}

\subsection{Vertex operator algebras, modules and automorphisms}
In this subsection, we recall some notion and terminology about vertex operator algebras from \cite{FLM,FHL,LL}. 

A vertex operator algebra $(V, Y, \1, \w)$ over $\C$ is a $\Z$-graded vector space $V=\bigoplus_{m\in\Z} V_m$ over $\C$ equipped with $$Y(v,z)=\sum_{n\in \Z}v_nz^{-n-1}\in(\End\ V)[[z,z^{-1}]],\qquad v\in V,$$ and the vacuum vector $\1$ and conformal vector $\w$ of $V$ satisfying a number of conditions (cf.\ \cite{FLM,FHL,LL}).
We also use the standard notation $L(n)$ to denote the component operator of $Y(\w,z)=\sum_{n\in \Z}L(n)z^{-n-2}$. A linear automorphism $\sigma$ of $V$ is called an {\em automorphism} of $V$ if $\sigma(\w)=\w$ and $\sigma(u_nv)=\sigma(u)_n\sigma(v)$ for any $u, v\in V$, $n\in \Z$. Note that $\sigma(\1)=\1$. We will denote the group of all automorphisms of $V$ by $\Aut(V)$.

Let $V$ be a vertex operator algebra.
A {\em $V$-module} $(M,Y_M)$ is a $\C$-graded vector space $M=\bigoplus_{m\in\C}M_m$ equipped
with a linear map $$Y_{M}(v,z)=\sum_{n\in\Z}v_nz^{-n-1}\in (\End\ M)[[z,z^{-1}]],\qquad v\in V$$ 
satisfying a number of conditions (cf. \cite{FHL,LL}). 
Note that for $m\in\C$, $M_m=\{v\in M\mid L(0)v=mv\}$ and $\dim M_m<\infty$.
If $M$ is irreducible, then there exists $\rho\in\C$ such that $M=\bigoplus_{n=0}^\infty M_{\rho+n}$ and $M_\rho\neq0$; $\rho$ is called the \emph{conformal weight} of $M$ and is denoted by $\rho(M)$.

Let $\irr(V)$ be the set of all isomorphism classes of irreducible $V$-modules.
We often identify an element in $\irr(V)$ with its representative.
Assume that the fusion product $\boxtimes$ is defined on $V$-modules (cf.\ \cite{HL}).
An irreducible $V$-module $M_1$ is called a \emph{simple current $V$-module} if for any irreducible $V$-module $M_2$, the fusion product $M_1\boxtimes M_2$ is also an irreducible $V$-module.
Let $\irr(V)_{sc}$ be the set of all isomorphism classes of simple current $V$-modules, which is a subset of $\irr(V)$.
The fusion product gives an abelian group structure on $\irr(V)_{sc}$.
 
Let $M$ be a $V$-module.
For $\sigma\in\Aut(V)$, the \emph{$\sigma$-conjugate module} $(M\circ \sigma,Y_{M\circ\sigma})$ is defined as follows: $M\circ \sigma=M$ as a vector 
space and its vertex operator is given by $Y_{M\circ \sigma}( u,z)= Y_M(\sigma u,z)$ for $u\in V$.
If $M$ is irreducible, then so is $M\circ \sigma$.
Hence $\Aut(V)$ acts on $\irr(V)$; $M\mapsto M\circ \sigma$ for $\sigma\in \Aut(V)$.
Note that for $\sigma\in\Aut(V)$ and $M,M^1,M^2\in\irr(V)$,
\begin{equation}
\rho(M)=\rho(M\circ \sigma),\qquad (M^1\circ\sigma)\boxtimes (M^2\circ\sigma)=(M^1\boxtimes M^2)\circ \sigma.\label{Eq:rhoconf}
\end{equation}
In particular, $\Aut(V)$ acts on $\irr(V)_{sc}$ as an automorphism group of an abelian group. 

\subsection{Parafermion vertex operator algebras and simple current modules}  In this subsection, we recall some known facts about parafermion vertex operator algebras. In particular,  we review the classification of simple current modules and their fusion products from \cite{ADJR}.

Let $\g$ be a (finite-dimensional) simple Lie algebra over $\C$ and let $\langle\, ,\, \rangle$ be the normalized Killing form of $\g$, i.e., $|\theta|^2=\langle\theta, \theta\rangle=2$ for the highest root $\theta$ of $\g$. Fix a Cartan subalgebra $\h$ of $\g$ and denote the corresponding root system by $\Delta_{\g}$ and the root lattice by $Q$. 

Recall that the affine Lie algebra $\hat{\g}$ associated with $\g$ is defined as $\hat{\g}=\g\otimes \C[t^{-1}, t]\oplus \C K$ with Lie brackets
\begin{align*}
&[x(m), y(n)]=[x, y](m+n)+\langle x, y\rangle m\delta_{m+n,0}K,\qquad
[K, \hat\g]=0,
\end{align*}
for $x, y\in \g$ and $m,n \in \Z$, where $x(n)$ denotes $x\otimes t^n$.

Let $L_{\hat\g}(k, 0)$ be the simple affine vertex operator algebra associated with $\hat{\g}$ at level $k\in\Z_{>0}$ (\cite{FZ}).
It is well-known that $L_{\hat\g}(k, 0)$ is a simple, rational, $C_2$-cofinite vertex operator algebra of CFT-type. 
The vertex operator subalgebra of $L_{\hat\g}(k, 0)$ generated by $\{h(-1)\1\mid h\in \h\}$ is isomorphic to the Heisenberg vertex operator algebra $M_{\h}(k,0)$ (cf. \cite{LL}). The \emph{parafermion vertex operator algebra} $K(\g,k)$ is defined to be the commutant \cite{FZ} of $M_{\h}(k,0)$ in $L_{\hat\g}(k, 0)$, that is,
\begin{align*}
K(\g,k)=\{v\in L_{\hat\g}(k, 0)\mid u_iv=0, u\in M_{\h}(k,0), i\geq 0\}.
\end{align*}
Clearly, $K(\g,k)$ is of CFT-type.
It was shown in \cite{DR,ALY} (cf. \cite{CM}) that $K(\g,k)$ is rational and $C_2$-cofinite.
Hence the fusion products are defined on $\irr(K(\g,k))$.

Next we recall  the classification of simple current $K(\g,k)$-modules and the fusion products on $\irr(K(\g,k))_{sc}$ from \cite{ADJR}.
It was shown in \cite{DW3} that the commutant subalgebra of $K(\g,k)$ in $L_{\hat\g}(k, 0)$ is isomorphic to a lattice VOA $V_{\sqrt{k}Q_L}$, where $Q_L$ is the sublattice of $Q$ generated by the long roots.  
Moreover,  $L_{\widehat{\mathfrak{g}}}(k,0)$ is a simple current extension of $V_{\sqrt{k}Q_L}\otimes K(\g,k)$ as follows:
\begin{align} L_{\widehat{\mathfrak{g}}}(k,0)\cong \bigoplus_{\beta+kQ_L\in Q/k Q_L}V_{\frac{1}{\sqrt{k}}\beta+\sqrt{k}Q_L}\otimes M^{0,\beta+kQ_L},\label{Eq:Lg1}
\end{align}
where 
\begin{equation}
M^{0,\beta+kQ_L} =\{v\in L_{\hat\g}(k, 0)\mid h(m)v=\langle\beta, h\rangle\delta_{m,0}v \text{ for } h\in \h, m\geq0\}\label{Eq:defM}
\end{equation}
are simple current $K(\g,k)$-modules.
Note that $M^{0,kQ_L}=K(\g,k)$ and that $M^{0,\beta+kQ_L}$ does not depend on the choice of a representative $\beta$ in $\beta+kQ_L$, up to isomorphism.

By \cite[Theorem 5.1]{DR}, any irreducible $K(\g,k)$-module can be obtained from an irreducible $V_{\sqrt{k}Q_L}\otimes K(\g,k)$-submodule of some irreducible $L_{\hat{\g}}(k,0)$-module.
Under certain identification, a complete list of irreducible $K(\g,k)$-modules was given in \cite[Theorem 5.1]{ADJR}. 
In particular, if $(X_n,k)\neq(E_8,2)$, then all non-isomorphic simple current $K(\g, k)$-modules can be obtained from the decomposition \eqref{Eq:Lg1}, where $X_n$ is the type of $\g$. 
\begin{proposition}\label{simplecurrent} Let $\g$ be a simple Lie algebra of type $X_n$ and let $k$ be a positive integer.
If $(X_n,k)\neq (E_8,2)$, then $\{M^{0,\beta+kQ_L}\mid \beta+kQ_L\in Q/kQ_L\}$ is a complete list of all non-isomorphic simple current $K(\g, k)$-modules.
\end{proposition}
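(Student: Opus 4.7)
The plan is to derive the proposition as a corollary of the classification of irreducible $K(\g,k)$-modules given in \cite[Theorem 5.1]{ADJR}, combined with the simple current extension structure \eqref{Eq:Lg1}.

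First, I would verify that every $M^{0,\beta+kQ_L}$ is indeed a simple current. The key observation is that \eqref{Eq:Lg1} presents $L_{\hat\g}(k,0)$ as a simple current extension of $V_{\sqrt{k}Q_L}\otimes K(\g,k)$ graded by the abelian group $Q/kQ_L$. Because each irreducible module of the lattice VOA $V_{\sqrt{k}Q_L}$ is itself a simple current, the standard theory of simple current extensions forces the $K(\g,k)$ tensor factor $M^{0,\beta+kQ_L}$ to be a simple current as well, and the fusion product among these modules is controlled by the group law on $Q/kQ_L$. Pairwise non-isomorphism of the $M^{0,\beta+kQ_L}$ follows because the decomposition \eqref{Eq:Lg1} is multiplicity-free while the lattice factors $V_{\frac{1}{\sqrt{k}}\beta+\sqrt{k}Q_L}$ are pairwise non-isomorphic across distinct cosets.

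To rule out other simple currents when $(X_n,k)\neq (E_8,2)$, I would invoke \cite[Theorem 5.1]{ADJR}, which realizes every irreducible $K(\g,k)$-module as a component in the branching of some $L_{\hat\g}(k,\lambda)$ and records the corresponding fusion rules. Any irreducible $K(\g,k)$-module not appearing in \eqref{Eq:Lg1} arises from $\lambda\neq 0$, and the fusion rules tabulated there exhibit multi-term decompositions, which precludes the simple current property. The case $(X_n,k)=(E_8,2)$ is exceptional precisely because $L_{\hat\g}(2,0)$ admits an additional simple current module outside the family indexed by $Q/kQ_L$ (as noted just after Theorem \ref{T:intro1}), and this upgrades further irreducible $K(\g,k)$-modules to simple currents.

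The main obstacle is organizational rather than conceptual: one must correctly read off from \cite{ADJR} which of the listed irreducibles have one-dimensional fusion behavior, and the $(E_8,2)$ exception must be carefully isolated since it is responsible for the extra automorphisms of $K(E_8,2)$ recorded in the introduction.
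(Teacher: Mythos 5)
Your approach is essentially the paper's: Proposition \ref{simplecurrent} is quoted directly from the classification and fusion rules of \cite[Theorems 5.1 and 5.2]{ADJR}, and the paper offers no independent proof beyond that citation, so deferring the verification (including the isolation of the $(E_8,2)$ exception) to \cite{ADJR} is exactly what is done here. One logical slip worth fixing: pairwise non-isomorphism of the modules $M^{0,\beta+kQ_L}$ does \emph{not} follow from the decomposition \eqref{Eq:Lg1} being multiplicity-free together with the lattice factors being pairwise non-isomorphic --- two distinct cosets could a priori pair non-isomorphic lattice modules with isomorphic parafermion modules without violating multiplicity-freeness. That non-isomorphism is instead part of the identifications worked out in \cite[Theorem 5.1]{ADJR}, so the conclusion stands, but the justification you give for it is a non sequitur.
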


The fusion products on $\Irr(K(\g,k))$ were determined in \cite[Theorem 5.2]{ADJR} by using the fusion products of $L_{\hat{\g}}(k,0)$-modules. 
In particular, the fusion products on $\irr(K(\g,k))_{sc}$ are given as follows:

\begin{proposition}\label{Cor:fusion} For $\beta+kQ_L,\gamma+kQ_L\in Q/kQ_L$, we have $$M^{0,\beta+kQ_L}\boxtimes M^{0,\gamma+kQ_L}=M^{0,\beta+\gamma+kQ_L}.$$
\end{proposition}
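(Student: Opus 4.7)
The plan is to leverage the decomposition of $L_{\hat\g}(k,0)$ in \eqref{Eq:Lg1} as a simple current extension of $V_{\sqrt{k}Q_L}\otimes K(\g,k)$. Set $U_\beta := V_{\frac{1}{\sqrt{k}}\beta+\sqrt{k}Q_L}\otimes M^{0,\beta+kQ_L}$ for each coset $\beta+kQ_L\in Q/kQ_L$, so that $L_{\hat\g}(k,0)=\bigoplus_\beta U_\beta$ as a $V_{\sqrt{k}Q_L}\otimes K(\g,k)$-module. My first step is to check that the vertex operator of $L_{\hat\g}(k,0)$ restricts to a nonzero map $U_\beta\otimes U_\gamma\to U_{\beta+\gamma}((z))$. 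Since each $U_\delta$ is precisely the joint eigenspace for the Heisenberg zero-modes $h(0)$, $h\in\h$, with eigenvalue $\langle\delta,h\rangle$, the commutator $[h(0),Y(v,z)]=Y(h(0)v,z)$ forces this grading compatibility, and non-triviality follows from the simplicity of $L_{\hat\g}(k,0)$.

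This nonzero restriction is an intertwining operator of type $\binom{U_{\beta+\gamma}}{U_\beta\ U_\gamma}$ for the tensor product VOA $V_{\sqrt{k}Q_L}\otimes K(\g,k)$. I would then invoke the tensor-product factorization of spaces of intertwining operators for a tensor product VOA acting on irreducible tensor-product modules, so that the intertwiner above decomposes as a product of nonzero intertwiners on the two factors. The lattice factor produces a nonzero intertwining operator of type $\binom{V_{\frac{1}{\sqrt{k}}(\beta+\gamma)+\sqrt{k}Q_L}}{V_{\frac{1}{\sqrt{k}}\beta+\sqrt{k}Q_L}\ V_{\frac{1}{\sqrt{k}}\gamma+\sqrt{k}Q_L}}$, consistent with the standard lattice VOA fusion rule, while the parafermion factor yields a nonzero intertwining operator of type $\binom{M^{0,\beta+\gamma+kQ_L}}{M^{0,\beta+kQ_L}\ M^{0,\gamma+kQ_L}}$. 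Thus $M^{0,\beta+\gamma+kQ_L}$ embeds as a direct summand of $M^{0,\beta+kQ_L}\boxtimes M^{0,\gamma+kQ_L}$.

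To finish, I would use the already recorded fact that each $M^{0,\beta+kQ_L}$ is a simple current, so $M^{0,\beta+kQ_L}\boxtimes M^{0,\gamma+kQ_L}$ is irreducible; together with the previous paragraph, this forces the equality asserted in the proposition. The main technical obstacle is the tensor-product factorization of intertwining operator spaces: one must be careful that after extracting a nonzero lattice intertwiner of the correct type, what survives is a bona fide nonzero $K(\g,k)$-intertwiner targeted at the specific module $M^{0,\beta+\gamma+kQ_L}$, rather than at some other isomorphism class appearing in a larger sum. Once that bookkeeping is handled, the rest is a formal combination of the simplicity of $L_{\hat\g}(k,0)$, the simple current property, and the well-known lattice VOA fusion rules.
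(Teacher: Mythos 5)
Your argument is correct in outline, but it is genuinely different from what the paper does: the paper offers no argument at all for this proposition and simply quotes \cite[Theorem 5.2]{ADJR}, where all fusion products of irreducible $K(\g,k)$-modules are computed from the fusion products of $L_{\hat\g}(k,0)$-modules. What you propose is the standard direct argument for fusion among the graded pieces of a simple current extension: the charge grading by $h(0)$-eigenvalues forces $Y(\cdot,z)$ to map $U_\beta\otimes U_\gamma$ into $U_{\beta+\gamma}((z))$, simplicity of $L_{\hat\g}(k,0)$ makes this restriction a nonzero intertwining operator of type $\binom{U_{\beta+\gamma}}{U_\beta\ U_\gamma}$, and the tensor-factorization of intertwining-operator spaces for $V_{\sqrt{k}Q_L}\otimes K(\g,k)$ (valid here since both factors are rational and $C_2$-cofinite, and the target $U_{\beta+\gamma}$ is itself an irreducible tensor-product module, so the ``bookkeeping'' worry you raise does not actually arise) extracts a nonzero parafermion intertwiner of type $\binom{M^{0,\beta+\gamma+kQ_L}}{M^{0,\beta+kQ_L}\ M^{0,\gamma+kQ_L}}$; the simple-current property, which the paper records right after \eqref{Eq:Lg1}, then upgrades ``summand'' to ``equality.'' Your route buys a self-contained proof of exactly the fusion rules needed for $\irr(K(\g,k))_{sc}$, at the cost of importing the nontrivial factorization theorem for intertwining operators of tensor-product VOAs and of taking the simple-current property as given; the paper's route buys brevity and the full fusion ring at once, at the cost of being a black-box citation. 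Just be aware that the simple-current property itself is usually established by essentially the same extension argument, so if you wanted a fully independent proof you would need to justify that input separately.
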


\begin{remark} For a simple Lie algebra $\g$ of type $E_8$, there exist a simple current $K(\g,2)$-module which is not isomorphic to $M^{0,\beta+2Q}$, $\beta\in Q/2Q$; it is obtained as submodules of an extra simple current $L_{\hat{\g}}(2,0)$-module, $L_{\hat{\g}}(2,\Lambda_7)$ (see \cite{L3}).
\end{remark}

At the end of this subsection, we deal with the automorphism group of $K(\g,k)$  by using $L_{\hat\g}(k,0)$.
Recall that $\Aut(L_{\hat\g}(k, 0))\cong\Aut(\g)$.
Let ${\rm Stab}_{\Aut(L_{\hat\g}(k, 0))}(\h)$ be the stabilizer of $\h$ in $\Aut(L_{\hat\g}(k, 0))$.
Then it also stabilizes $V_{\sqrt{k}Q_L}\otimes K(\g,k)$, and induces a permutation on the set of irreducible $V_{\sqrt{k}Q_L}\otimes K(\g,k)$-modules in the decomposition \eqref{Eq:Lg1}.

From \cite[Section 16.5]{H}, the stabilizer ${\rm Stab}_{\Aut(\g)}(\h)$ is described by the following exact sequence
\begin{equation}
1\to\{\exp({\text ad}\ h)\mid h\in\h\}\to {\rm Stab}_{\Aut(\g)}(\h)\xrightarrow{\psi_0}\Aut(\Delta_\g)\to1,\label{Eq:stab}
\end{equation}
where $\Aut(\Delta_\g)$ is the automorphism group of $\Delta_\g$ (see \eqref{Eq:defautr}) and $\psi_0$ is the restriction of ${\rm Stab}_{\Aut(\g)}(\h)$ to $\h$.
We regard $\h$ as a subspace of $L_{\hat{\g}}(k,0)_1$.
Since $\Aut(L_{\hat{\g}}(k,0))\cong\Aut(\g),$ we obtain the surjective group homomorphism $$\psi:{\rm Stab}_{\Aut(L_{\hat{\g}}(k,0))}(\h)\to\Aut(\Delta_\g)$$
as the restriction of ${\rm Stab}_{\Aut(L_{\hat{\g}}(k,0))}(\h)$ to $\h\subset V_{\sqrt{k}Q_L}$.

By \eqref{Eq:stab}, $\ker\psi=\{\exp(h(0))\mid h\in\h\}\subset \Aut(L_{\hat{\g}}(k,0))$.
By the construction of $K(\g,k)$, $\ker\psi$ acts trivially on $K(\g,k)$.
Hence we can view $\psi$ as the restriction of ${\rm Stab}_{\Aut(L_{\hat{\g}}(k,0))}(\h)$ to $\h\otimes K(\g,k)$.
Note that the action of $\im\psi$ on $K(\g,k)$ is not necessarily faithful.

For $\sigma\in {\rm Stab}_{\Aut(L_{\hat\g}(k, 0))}(\h)$ and $\beta+kQ_L\in Q/kQ_L$, 
\begin{equation*}
\sigma(V_{\frac{1}{\sqrt{k}}\beta+\sqrt{k}Q_L}\otimes M^{0,\beta+kQ_L})=V_{\frac{1}{\sqrt{k}}\psi(\sigma)_{|\h}(\beta)+\sqrt{k}Q_L}\otimes M^{0,\psi(\sigma)_{|\h}(\beta)+kQ_L},
\end{equation*}
where we view $\psi(\sigma)_{|\h}\in\Aut(\Delta_\g)$ as an isometry of $Q$.
The equation above implies that the restriction of $\sigma$ to $V_{\sqrt{k}Q_L}\otimes K(\g,k)$ can be extended to an isomorphism of $V_{\sqrt{k}Q_L}\otimes K(\g,k)$-modules from $V_{\frac{1}{\sqrt{k}}\beta+\sqrt{k}Q_L}\otimes M^{0,\beta+kQ_L}$ to $(V_{\frac{1}{\sqrt{k}}\psi(\sigma)_{|\h}(\beta)+\sqrt{k}Q_L}\otimes M^{0,\psi(\sigma)_{|\h}(\beta)+kQ_L})\circ \sigma_{|V_{\sqrt{k}Q_L}\otimes K(\g,k)}$.
Hence, we obtain
\begin{equation}
(V_{\frac{1}{\sqrt{k}}\beta+\sqrt{k}Q_L}\otimes M^{0,\beta+kQ_L})\circ\sigma_{|V_{\sqrt{k}Q_L}\otimes K(\g,k)}^{-1}\cong V_{\frac{1}{\sqrt{k}}\psi(\sigma)_{|\h}(\beta)+\sqrt{k}Q_L}\otimes M^{0,\psi(\sigma)_{|\h}(\beta)+kQ_L}.\label{Eq:actg}
\end{equation}

\begin{lemma}\label{L:inj} The action of $\Aut(K(\g,k))$ on $\irr(K(\g,k))_{sc}$ is faithful.
\end{lemma}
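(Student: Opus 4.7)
The strategy is to leverage the realization \eqref{Eq:Lg1} of $L_{\hat{\g}}(k,0)$ as a simple current extension of $V_{\sqrt{k}Q_L}\otimes K(\g,k)$ to lift any $\sigma\in \Aut(K(\g,k))$ acting trivially on $\irr(K(\g,k))_{sc}$ to an automorphism $\bar{\sigma}$ of $L_{\hat{\g}}(k,0)$ stabilizing $\h$, and then invoke the exact sequence \eqref{Eq:stab} to conclude $\sigma=\id$.

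First, suppose $\sigma\in\Aut(K(\g,k))$ fixes every isomorphism class in $\irr(K(\g,k))_{sc}$. Form $\tilde{\sigma}:=\id_{V_{\sqrt{k}Q_L}}\otimes\sigma\in\Aut(V_{\sqrt{k}Q_L}\otimes K(\g,k))$. Since conjugate modules distribute over tensor products, and $M^{0,\beta+kQ_L}\circ\sigma^{-1}\cong M^{0,\beta+kQ_L}$ by hypothesis (it is a simple current $K(\g,k)$-module by \eqref{Eq:Lg1}), for each $\beta+kQ_L\in Q/kQ_L$ we obtain
\[
(V_{\frac{1}{\sqrt{k}}\beta+\sqrt{k}Q_L}\otimes M^{0,\beta+kQ_L})\circ\tilde{\sigma}^{-1}\cong V_{\frac{1}{\sqrt{k}}\beta+\sqrt{k}Q_L}\otimes M^{0,\beta+kQ_L}.
\]
Hence $\tilde{\sigma}$ preserves the isomorphism class of every homogeneous summand in the decomposition \eqref{Eq:Lg1}.

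Next, I invoke the standard lifting theorem for simple current extensions: because every summand is preserved up to isomorphism, $\tilde{\sigma}$ extends to an automorphism $\bar{\sigma}\in\Aut(L_{\hat{\g}}(k,0))$ whose restriction to $V_{\sqrt{k}Q_L}\otimes K(\g,k)$ equals $\tilde{\sigma}$. Since $\tilde{\sigma}$ fixes $V_{\sqrt{k}Q_L}$ pointwise, and the Heisenberg subVOA $M_{\h}(k,0)$ (with $\h$ as its weight-one subspace) is contained in $V_{\sqrt{k}Q_L}$ by the double-commutant relation, $\bar{\sigma}$ fixes $\h$ pointwise. Therefore $\bar{\sigma}\in{\rm Stab}_{\Aut(L_{\hat{\g}}(k,0))}(\h)$ with $\psi(\bar{\sigma})=\id$, so the exact sequence \eqref{Eq:stab} gives $\bar{\sigma}\in\ker\psi=\{\exp(h(0))\mid h\in\h\}$. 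As noted after \eqref{Eq:stab}, such inner automorphisms act trivially on $K(\g,k)$, since $h(0)$ annihilates the commutant of $M_{\h}(k,0)$. Consequently $\sigma=\bar{\sigma}|_{K(\g,k)}=\id$.

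The main obstacle is the lifting step in the third paragraph. Abstractly, an automorphism of the base VOA extends to the simple-current extension whenever it preserves each graded piece up to isomorphism and is compatible with the quadratic form on $Q/kQ_L$ encoding the extension; the latter compatibility is automatic here because $\tilde{\sigma}$ acts trivially on the grading group. This is the only nontrivial ingredient we borrow from the existing theory of simple current extensions; all remaining steps are direct bookkeeping using \eqref{Eq:Lg1} and \eqref{Eq:stab}.
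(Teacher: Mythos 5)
Your proof is correct and follows essentially the same route as the paper: both set $\tilde{\sigma}=\id\otimes\sigma$, observe that it fixes every summand of \eqref{Eq:Lg1} up to isomorphism, lift it to an automorphism of $L_{\hat{\g}}(k,0)$ (the paper cites \cite[Theorem 3.3]{S} for this step, which you invoke only as ``the standard lifting theorem''), and then use $\h\subset V_{\sqrt{k}Q_L}$ together with the exact sequence \eqref{Eq:stab} to conclude the lift lies in $\ker\psi$ and hence restricts trivially to $K(\g,k)$. The only difference is cosmetic: you should pin the lifting step to a concrete reference rather than leaving it as folklore.
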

\begin{proof} Let $\sigma\in\Aut(K(\g,k))$ which acts trivially on $\irr(K(\g,k))_{sc}$.
Set $$\tau=(id,\sigma)\in\Aut(V_{\sqrt{k}Q_L})\times\Aut(K(\g,k))\subset\Aut(V_{\sqrt{k}Q_L}\otimes K(\g,k)).$$
Then $\tau$ preserves every irreducible $V_{\sqrt{k}Q_L}\otimes K(\g,k)$-submodule in \eqref{Eq:Lg1}.
By \cite[Theorem 3.3]{S}, $\tau$ lifts to $\hat{\sigma}\in\Aut(L_{\hat\g}(k, 0))$ satisfying $\hat{\sigma}_{|V_{\sqrt{k}Q_L}\otimes K(\g,k)}=\tau$.
It follows from $\h\subset V_{\sqrt{k}Q_L}$ that $\hat{\sigma}=id$ on $\h$.
Hence $\hat{\sigma}\in{\rm Stab}_{\Aut(L_{\hat\g}(k, 0))}(\h)$, and $\hat{\sigma}\in\ker\psi$.
Hence $\hat{\sigma}_{|K(\g,k)}=id$, and $\sigma=id$.
\end{proof}

As we mentioned above, $\im\psi\cong\Aut(\Delta_\g)$.
Considering the restriction of $\im\psi$ to $K(\g,k)$,  we obtain the group homomorphism   
\begin{equation}
\varphi:\Aut(\Delta_\g)\to\Aut(K(\g,k)).\label{Eq:phi}
\end{equation} 

\begin{lemma}\label{L:inj0} If the action of $\Aut(\Delta_\g)$ on $Q/kQ_L$ is faithful, then $\varphi$ is injective.
\end{lemma}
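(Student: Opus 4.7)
The plan is to take $g\in\ker\varphi$, lift it to an automorphism of $L_{\hat{\g}}(k,0)$ via the surjection $\psi$, and then use the isomorphism type of $M^{0,\beta+kQ_L}$ as a $K(\g,k)$-module to detect $g$ on $Q/kQ_L$.

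First I would choose $\sigma\in{\rm Stab}_{\Aut(L_{\hat{\g}}(k,0))}(\h)$ such that $\psi(\sigma)_{|\h}=g$, which exists by the surjectivity of $\psi$. By the definition of $\varphi$, the restriction $\sigma_{|K(\g,k)}$ equals $\varphi(g)$, and since $g\in\ker\varphi$, this restriction is the identity. Consequently, the restriction of $\sigma$ to the tensor product subalgebra splits as $\sigma_{|V_{\sqrt{k}Q_L}\otimes K(\g,k)} = (\sigma_{|V_{\sqrt{k}Q_L}},\,\id)$.

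Next I would apply the relation \eqref{Eq:actg}: for every $\beta\in Q$,
\[
(V_{\frac{1}{\sqrt{k}}\beta+\sqrt{k}Q_L}\otimes M^{0,\beta+kQ_L})\circ\sigma_{|V_{\sqrt{k}Q_L}\otimes K(\g,k)}^{-1}\cong V_{\frac{1}{\sqrt{k}}g(\beta)+\sqrt{k}Q_L}\otimes M^{0,g(\beta)+kQ_L}.
\]
Because the conjugation on the left acts only on the first tensor factor (the second factor being fixed by the identity), comparing the $K(\g,k)$-module tensor factors of the two irreducible $V_{\sqrt{k}Q_L}\otimes K(\g,k)$-modules yields $M^{0,\beta+kQ_L}\cong M^{0,g(\beta)+kQ_L}$ as $K(\g,k)$-modules.

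Finally, by Proposition \ref{simplecurrent}, the cosets in $Q/kQ_L$ parametrise the isomorphism classes of these simple current modules bijectively, so $g(\beta)+kQ_L=\beta+kQ_L$ for every $\beta\in Q$. Hence $g$ acts trivially on $Q/kQ_L$, and the hypothesis that $\Aut(\Delta_\g)$ acts faithfully on $Q/kQ_L$ forces $g=\id$. Thus $\varphi$ is injective. The only real subtlety is the splitting step: one must verify that when $\sigma$ acts trivially on the $K(\g,k)$-factor, the conjugation-twist in \eqref{Eq:actg} really only shifts the lattice-VOA part, so that isomorphism of irreducible tensor modules isolates the factor $M^{0,\beta+kQ_L}$ up to isomorphism; the rest is bookkeeping.
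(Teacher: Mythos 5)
Your proposal is correct and follows essentially the same route as the paper: lift $g\in\ker\varphi$ through $\psi$ to the stabilizer of $\h$ in $\Aut(L_{\hat\g}(k,0))$, observe that the induced conjugation fixes each $M^{0,\beta+kQ_L}$ since $\varphi(g)=\id$, and then read off from \eqref{Eq:actg} together with Proposition \ref{simplecurrent} that $g$ acts trivially on $Q/kQ_L$. The ``subtlety'' you flag (isolating the $K(\g,k)$-tensor factor from an isomorphism of irreducible modules over the tensor product) is the standard uniqueness of the tensor decomposition of irreducible modules over a tensor product of rational vertex operator algebras, and is implicit in the paper's argument as well.
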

\begin{proof} Let $g\in\ker\varphi$.
There exists $\hat{g}\in {\rm Stab}_{\Aut(L_{\hat\g}(k, 0))}(\h)$ such that $\psi(\hat{g})=g$.
It follows from $\psi(\hat{g})_{|K(\g,k)}=\varphi(g)=id$ that $M^{0,\beta+kQ_L}\circ\psi(\hat{g})_{|K(\g,k)}\cong M^{0,\beta+kQ_L}$ for all $\beta+kQ_L\in Q/kQ_L$.
The isomorphism \eqref{Eq:actg} shows that $\psi(\hat{g})_{|\h}^{-1}=g^{-1}$ acts trivially on $Q/kQ_L$, and we obtain $g^{-1}=id=g$ by the assumption.
\end{proof}

\section{Lower bounds of the length for root lattices}
In this section, we define the length of elements in a lattice with respect to a finite generating set and give its lower bounds.
As an application, we give lower bounds of the length for irreducible root lattices.

\subsection{Definition of length}

Let $n\in\Z_{>0}$ and set $\Omega_n=\{1,2,\dots,n\}$.

\begin{definition}\label{Def:length} Let $Y$ be a lattice and let $\Pi$ be a finite  generating set of $Y$.
The \emph{length} $\ell_Y(\beta)$ of $\beta$ in $Y$ with respect to $\Pi$ is defined to be $$\ell_Y(\beta)=\min\left\{u\ \left|\ \beta=\sum_{j=1}^u\alpha_j,\ \alpha_j\in\Pi\right.\right\}.$$
\end{definition}

The following lemma is probably well-known; however, we include a proof for completeness.
\begin{lemma}\label{L:lformula} Let $Y$ be a lattice in $\R^n$ and let $\Pi$ be a finite generating set of $Y$.
Let $\{e_i\mid i\in\Omega_n\}$ be a basis of $\R^n$ and let $\beta=\sum_{i=1}^nx_ie_i\in Y$.
Then, for any $\emptyset\neq S\subset\Omega_n$, 
\begin{equation}
\ell_Y(\beta)\ge\dfrac{1}{M_S}\sum_{i\in S}|x_i|
,\quad \text{where}\ M_S=\max\left\{\left.\sum_{i\in S}|a_i|\ \right|\ \sum_{i=1}^na_ie_i\in \Pi\right\}.\label{Eq:ell}
\end{equation}
If the equality holds in \eqref{Eq:ell}, then for $\beta=\sum_{j=1}^{\ell_Y(\beta)}\alpha_j$, $\alpha_j=\sum_{i=1}^ny_{ji}e_i\in\Pi$, we have $\sum_{i\in S}|y_{ji}|=M_S$ for $1\le j\le \ell_Y(\beta)$ and $y_{j'i'}y_{j''i'}\ge0$ for $1\le j',j''\le \ell_Y(\beta)$ and $i'\in S$.
\end{lemma}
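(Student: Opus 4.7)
The plan is to realize $\beta$ as a shortest expression $\beta=\sum_{j=1}^{\ell_Y(\beta)}\alpha_j$ with $\alpha_j=\sum_{i=1}^n y_{ji}e_i\in\Pi$, so that $x_i=\sum_{j=1}^{\ell_Y(\beta)} y_{ji}$ for every $i\in\Omega_n$, and then bound $\sum_{i\in S}|x_i|$ from above by applying two triangle-type inequalities in succession.

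First, I would apply the ordinary triangle inequality coordinate by coordinate to obtain
\[
\sum_{i\in S}|x_i|=\sum_{i\in S}\biggl|\sum_{j=1}^{\ell_Y(\beta)} y_{ji}\biggr|\le \sum_{i\in S}\sum_{j=1}^{\ell_Y(\beta)}|y_{ji}|=\sum_{j=1}^{\ell_Y(\beta)}\sum_{i\in S}|y_{ji}|.
\]
Next, since each $\alpha_j\in\Pi$, the definition of $M_S$ gives $\sum_{i\in S}|y_{ji}|\le M_S$ for every $j$, so the right-hand side is at most $\ell_Y(\beta)\cdot M_S$. Dividing by $M_S$ yields the inequality \eqref{Eq:ell}.

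For the equality case, I would trace back which inequalities must then be saturated. Equality in the second step forces $\sum_{i\in S}|y_{ji}|=M_S$ for every $j\in\{1,\dots,\ell_Y(\beta)\}$. Equality in the first step forces $|\sum_{j} y_{ji}|=\sum_{j}|y_{ji}|$ for every $i\in S$, which is precisely the condition that, for each fixed $i\in S$, all the nonzero $y_{ji}$ share a common sign; equivalently, $y_{j'i'}y_{j''i'}\ge 0$ for all $j',j''$ and all $i'\in S$. Both conditions claimed in the statement then follow simultaneously.

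I do not anticipate a genuine obstacle here: the argument is a clean double application of the triangle inequality together with the definition of $M_S$, and the equality analysis is standard. The only thing worth being careful about is keeping straight which of the two inequalities produces which of the two equality conclusions, and making sure to use the length-minimizing decomposition of $\beta$ (rather than an arbitrary one) so that the factor $\ell_Y(\beta)$ appears on the right.
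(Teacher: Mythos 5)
Your proposal is correct and follows essentially the same route as the paper's proof: write $\beta$ as a sum of elements of $\Pi$, apply the triangle inequality coordinatewise over $S$, bound each $\sum_{i\in S}|y_{ji}|$ by $M_S$, and read off the equality conditions from saturation of both inequalities. The only cosmetic difference is that the paper runs the estimate for an arbitrary decomposition of length $u$ and then minimizes over $u$, whereas you fix a length-minimizing decomposition from the start; the content is identical.
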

\begin{proof} Let $\beta_1,\dots,\beta_u\in\Pi$ such that $\beta=\sum_{j=1}^u\beta_j$.
Let $\beta_j=\sum_{i=1}^nz_{ji}e_i$ for $1\le j\le u$.
Then $\beta=\sum_{j=1}^u\sum_{i=1}^nz_{ji}e_i$ and $x_i=\sum_{j=1}^uz_{ji}$.
Hence 
\begin{equation}
\sum_{i\in S}|x_i|\le\sum_{j=1}^u\sum_{i\in S}|z_{ji}|\le uM_S,\label{Eq:length}
\end{equation}
which shows $u\ge ({1}/{M_S})\sum_{i\in S}|x_i|$ for all $u$.
Therefore $\ell_Y(\beta)\ge \dfrac{1}{M_S}\sum_{i\in S}|x_i|$.

If the equality holds in \eqref{Eq:ell}, 
then all the equalities hold in \eqref{Eq:length}; $\sum_{i\in S}|z_{ji}|=M_S$ for $1\le j\le \ell_Y(\beta)$.
In addition, $|z_{j'i'}+z_{j''i'}|=|z_{j'i'}|+|z_{j''i'}|$ for $1\le j',j''\le \ell_Y(\beta)$ and $i'\in S$, which proves the latter assertion.
\end{proof}

\begin{remark} The rank of the lattice $Y$ in Lemma \ref{L:lformula} is not necessary $n$.
\end{remark}

\subsection{Lower bounds of length for root lattices $X_n\neq E_n$}

Let $X_n$ be the type of an irreducible root system with $X_n\neq E_n$, that is, $X_n\in\{A_n,B_n,C_n,D_n,F_4,G_2\}$. If $X_n\neq C_n$, we use 
$\{e_i\mid i\in\Omega_{n}\}$ to denote an orthonormal basis of $\R^{n}$.
When $X_n=C_n$, we modify the norm of $e_i$ so that $\langle e_i,e_j\rangle=(1/2)\delta_{i,j}$. 
The following sets $\Delta_{X_n}$ denote root systems of the corresponding type $X_n$: 
\begin{align}
\Delta_{A_{n}}&=\{e_i-e_j\mid i, j\in\Omega_{n+1},\ i\neq j\}\quad (n\ge1)
\label{Eq:typeA};\\
\Delta_{B_n}&=\{\pm e_i\mid i\in\Omega_n\}\cup\{\pm(e_i\pm e_j)\mid i, j\in\Omega_n,\ i\neq j\}
\label{Eq:typeB}\quad (n\ge2);\\
\Delta_{C_n}&=\{\pm(e_i\pm e_j)\mid i, j\in\Omega_n,\ i\neq j\}\cup\{\pm 2e_i\mid i\in\Omega_n\}
\label{Eq:typeC}\quad (n\ge2);\\
\Delta_{D_n}&=\{\pm(e_i\pm e_j)\mid i, j\in\Omega_n,\ i\neq j\}
\label{Eq:typeD}\quad (n\ge4); \\
\Delta_{F_4}&=\{\pm e_j, \sum_{i\in\Omega_4}x_ie_i\mid j\in\Omega_4,\ x_i\in\{\pm\frac{1}{2}\}\}\cup \{\pm(e_i\pm e_j)\mid i,j\in\Omega_4,\ i\neq j\}
\label{Eq:typeF};\\
\Delta_{G_2}&=\left\{\left.\pm\frac{1}{3}(2e_a-e_b-e_c)\ \right|\ \{a,b,c\}=\Omega_3\right\}\cup \{e_i-e_j\mid i,j\in\Omega_3,\ i\neq j\}.\label{Eq:typeG}
\end{align}
By abuse of notations, we also use $X_n$ to denote the root lattice of type $X_n$ generated by $\Delta_{X_n}$. 
For $\beta\in X_n$, let $\ell_{X_n}(\beta)$ be the length of $\beta$ with respect to $\Delta_{X_n}$ as defined in Definition \ref{Def:length}.
Substituting $S=\Omega_{n+1}$ if $X_n=A_n$ or $G_2$, $S=\Omega_{n}$ if $X_n=B_n,C_n,D_n$ or $F_4$, and $S=\{a\}$ in Lemma \ref{L:lformula}, we obtain the following:

\begin{proposition}\label{P:length} Let $X_n\in\{A_n,B_n,C_n,D_n,F_4,G_2\}$.
Set $m=n+1,3$ if $X_n=A_n,G_2$, respectively, and set $m=n$ otherwise.
Let $\beta=\sum_{i=1}^{m} x_ie_i\in X_n$.
\begin{enumerate}[{\rm (1)}]
\item $\ell_{X_n}(\beta)\ge({1}/{2})\sum_{i=1}^{m}|x_i|$.
\item If $X_n\neq C_n$, then $\ell_{X_n}(\beta)\ge {|x_a|}$ for any $a\in\Omega_m$.
\end{enumerate}
\end{proposition}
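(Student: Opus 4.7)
The plan is to derive both inequalities as direct applications of Lemma \ref{L:lformula}, reducing the proposition to the computation of the constant $M_S$ for two natural choices of $S$ in each of the six root systems.

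For part (1), I will take $S=\Omega_m$, so that $\sum_{i\in S}|x_i|=\sum_{i=1}^m|x_i|$. The task then becomes verifying $M_{\Omega_m}=2$ in every case. An inspection of the explicit descriptions \eqref{Eq:typeA}--\eqref{Eq:typeG} confirms this: in each of $A_n,B_n,C_n,D_n,F_4,G_2$ one sees by direct enumeration that the $\ell^1$-norm of any root is at most $2$, and the value $2$ is attained (by $e_i-e_j$ in $A_n, D_n, G_2$; by $\pm(e_i\pm e_j)$ or $\pm 2e_i$ in $B_n,C_n$; by $\pm(e_i\pm e_j)$ or by a half-integer root $\sum_{i}\pm\tfrac12 e_i$ in $F_4$). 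The short roots $\pm e_i$ in $B_n,F_4$ give $\ell^1$-norm $1$ and the short roots of $G_2$ give $\ell^1$-norm $4/3$, both below $2$. Substituting $M_{\Omega_m}=2$ into \eqref{Eq:ell} yields (1).

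For part (2), I will take $S=\{a\}$, so that $M_{\{a\}}=\max\{|a_a|\mid\sum_i a_i e_i\in\Delta_{X_n}\}$. For $X_n\in\{A_n,B_n,D_n,F_4,G_2\}$, another direct inspection of \eqref{Eq:typeA}--\eqref{Eq:typeG} shows that every root coefficient satisfies $|a_a|\le 1$, with equality attained (for $G_2$ the short roots have coefficients in $\{0,\pm 1/3,\pm 2/3\}$, so the maximum $1$ is only reached by the long roots $e_i-e_j$). Hence $M_{\{a\}}=1$ and \eqref{Eq:ell} gives $\ell_{X_n}(\beta)\ge |x_a|$. The one genuine obstruction is $C_n$, where $\pm 2e_a\in\Delta_{C_n}$ forces $M_{\{a\}}=2$ and only the weaker bound $\ell_{C_n}(\beta)\ge |x_a|/2$ survives; this is precisely why (2) must exclude $C_n$.

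Since the entire argument reduces to these mechanical inspections of the finite lists of roots in \eqref{Eq:typeA}--\eqref{Eq:typeG}, I expect no serious obstacle. The only points requiring a bit of care are the half-integer roots of $F_4$ (which do \emph{not} violate $M_{\Omega_m}=2$ since their $\ell^1$-norms are exactly $2$, and do not violate $M_{\{a\}}=1$ since each coordinate is $\pm 1/2$) and the short roots of $G_2$ (whose $\ell^1$-norm $4/3$ is below $2$, ensuring $M_{\Omega_{3}}$ is still realized by a long root).
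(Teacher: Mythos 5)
Your proposal is correct and matches the paper's own argument exactly: the paper derives the proposition by substituting $S=\Omega_m$ (for part (1)) and $S=\{a\}$ (for part (2)) into Lemma \ref{L:lformula} and reading off $M_S$ from the explicit root lists \eqref{Eq:typeA}--\eqref{Eq:typeG}. Your case-by-case verification of $M_{\Omega_m}=2$ and $M_{\{a\}}=1$ (with the $C_n$ exception caused by $\pm 2e_a$) is precisely the computation the paper leaves implicit.
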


\subsection{Lower bounds of length for the root lattices $E_n$}

Let $P(\Omega_8)$ be the power set of $\Omega_8=\{1,2,\dots,8\}$.
Then $P(\Omega_8)$ has a vector space structure over $\mathbb{F}_2$ under the symmetric difference. 
Let $H_8$ be the subspace of $P(\Omega_8)$ spanned by $$\{1,2,3,4\},\{1,2,5,6\},\{1,2,7,8\},\{1,3,5,7\};$$ it is so-called the extended Hamming code of length $8$.
Note that $H_8$ consists of $\emptyset$, $\Omega_8$ and $14$ elements of size $4$ and that for any $X,Y\in H_8$, $|X\cap Y|\in2\Z$.
For $1\le m\le 8$, let $\binom{\Omega_8}{m}$ denote the set of all size $m$ subsets of $\Omega_8$.
Let $H_8(4)$ denote the set of all size $4$ elements in $H_8$, that is, $H_8(4)=H_8\cap\binom{\Omega_8}{4}$.
The following is well-known (cf. \cite{CS}).
\begin{lemma}\label{L:E8}
Let $T_1\in\binom{\Omega_8}{2}$.
Then there exist $T_2,T_3,T_4\in\binom{\Omega_8}{2}$ such that $T_i\cup T_j\in H_8(4)$ for any distinct $1\le i, j\le 4$.
\end{lemma}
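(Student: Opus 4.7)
The plan is to reduce the problem to counting the weight-$4$ codewords in $H_8$ that contain the prescribed $2$-set $T_1$. Since $T_i\cup T_j\in H_8(4)$ has size $4$ while $|T_i|=|T_j|=2$, the sets $T_i$ and $T_j$ must be disjoint for distinct $i,j$, so $T_1,T_2,T_3,T_4$ is forced to be a partition of $\Omega_8$ into pairs. It therefore suffices to produce three distinct codewords $C_1,C_2,C_3\in H_8(4)$ each containing $T_1$ and then set $T_{i+1}:=C_i\setminus T_1$.

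For the upper bound on this count, if $C,C'\in H_8(4)$ both contain $T_1$ and $C\neq C'$, then $|C\cap C'|\geq 2$; combined with $|C\cap C'|\in 2\Z$ (noted in the text) and $|C\cap C'|<4$, this forces $C\cap C'=T_1$. Hence the sets $C\setminus T_1$, as $C$ ranges over codewords of $H_8(4)$ containing $T_1$, are pairwise disjoint $2$-subsets of $\Omega_8\setminus T_1$, so at most $3$ such $C$ exist. For the matching lower bound, I would double count: the total number of incidences $(T,C)\in\binom{\Omega_8}{2}\times H_8(4)$ with $T\subset C$ equals $|H_8(4)|\cdot\binom{4}{2}=14\cdot 6=84$, while $\left|\binom{\Omega_8}{2}\right|=28$. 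Since each $T$ lies in at most $3$ codewords by the previous paragraph and $28\cdot 3=84$, equality must hold uniformly, so every pair (in particular $T_1$) is contained in exactly three codewords of $H_8(4)$.

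With $T_{i+1}:=C_i\setminus T_1$ for $i=1,2,3$, the relation $C_i\cap C_j=T_1$ (for $i\neq j$) shows that $T_2,T_3,T_4$ are pairwise disjoint $2$-subsets of $\Omega_8\setminus T_1$, and $T_1\cup T_{i+1}=C_i\in H_8(4)$ by construction. For the remaining incidences with $2\leq i<j\leq 4$, using the $\mathbb{F}_2$-linearity of $H_8$ under symmetric difference together with $T_1+T_i=T_1\cup T_i=C_{i-1}$, one computes
\[
T_i\cup T_j = T_i+T_j = (T_1+T_i)+(T_1+T_j) = C_{i-1}+C_{j-1}\in H_8,
\]
and inclusion--exclusion together with $|C_{i-1}\cap C_{j-1}|=2$ gives $|C_{i-1}+C_{j-1}|=4+4-2\cdot 2=4$, placing this element in $H_8(4)$. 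The main delicate point is pinning down the exact count of codewords of $H_8(4)$ through a fixed pair; once this is in hand, the linearity of $H_8$ handles all remaining incidences automatically.
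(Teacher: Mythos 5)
Your proof is correct and complete. The paper itself offers no argument for this lemma -- it is stated as ``well-known'' with a pointer to Conway--Sloane -- so your self-contained combinatorial verification is genuinely different in that it actually proves the statement rather than citing it. The two pillars of your argument are both sound: the upper bound (any two distinct weight-$4$ codewords through $T_1$ meet exactly in $T_1$, because their intersection has even size, is at least $2$, and is less than $4$, so the residual $2$-sets are pairwise disjoint inside the $6$-element complement) and the matching lower bound via the incidence count $14\cdot\binom{4}{2}=84=28\cdot 3$, which forces every $2$-set to lie in exactly three codewords. The final step, checking $T_i\cup T_j\in H_8(4)$ for $2\le i<j\le 4$ via $\mathbb{F}_2$-linearity and the cardinality computation $|C_{i-1}+C_{j-1}|=4+4-2\cdot 2=4$, is also correct (and one could shortcut it by noting $T_i\cup T_j=T_i+T_j$ is a union of two disjoint $2$-sets, hence automatically has size $4$). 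What your argument buys is independence from the design-theoretic fact that $H_8(4)$ is a Steiner system $S(3,4,8)$: you derive the relevant instance of it (each pair lies in exactly three blocks) from nothing more than the two properties of $H_8$ the paper has already recorded, namely its weight distribution and the evenness of pairwise intersections.
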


Set $H_7=\{S\in H_8\mid 8\notin S\}$.
Then $H_7$ consists of $\emptyset$ and $7$ elements of size $4$.
Let $H_7(4)$ denote the set of all size $4$ elements in $H_7$.
The following lemma is immediate from Lemma \ref{L:E8}.
\begin{lemma}\label{L:E7}
Let $T_1\in\binom{\Omega_7}{2}$.
Then there exist $T_2,T_3\in\binom{\Omega_7}{2}$ such that $T_i\cup T_j\in H_7(4)$ for any distinct $1\le i, j\le 3$.
\end{lemma}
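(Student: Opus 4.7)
\medskip

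\noindent\textbf{Proof proposal for Lemma \ref{L:E7}.}
The plan is to deduce this statement directly from Lemma \ref{L:E8} by a parity/partition argument on the element $8$.
Given $T_1\in\binom{\Omega_7}{2}$, I would first regard $T_1$ as an element of $\binom{\Omega_8}{2}$ and apply Lemma \ref{L:E8} to obtain pairs $T_2',T_3',T_4'\in\binom{\Omega_8}{2}$ with $T_i'\cup T_j'\in H_8(4)$ for all distinct $1\le i,j\le 4$ (setting $T_1'=T_1$).

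The key observation is that the four pairs $T_1',T_2',T_3',T_4'$ are \emph{pairwise disjoint}: if $T_i'\cap T_j'$ were nonempty for $i\ne j$ then $|T_i'\cup T_j'|\le 3$, contradicting $T_i'\cup T_j'\in H_8(4)$. Hence $T_1'\sqcup T_2'\sqcup T_3'\sqcup T_4'$ has exactly $8$ elements and therefore equals $\Omega_8$. In particular, the element $8\in\Omega_8$ lies in exactly one of these four pairs, and since $T_1\subset\Omega_7$ it must lie in one of $T_2',T_3',T_4'$. Relabel so that $8\in T_4'$ and set $T_2:=T_2'$, $T_3:=T_3'$; then $T_2,T_3\subset\Omega_7$, so $T_2,T_3\in\binom{\Omega_7}{2}$.

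It remains to check that each of $T_1\cup T_2$, $T_1\cup T_3$, $T_2\cup T_3$ lies in $H_7(4)$. Each is already in $H_8(4)$ by the choice of the $T_i'$, and each is contained in $\Omega_7$ because its two constituent pairs are. Since $H_7(4)=H_8(4)\cap P(\Omega_7)$ by definition of $H_7$, the conclusion follows.

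I do not expect any substantial obstacle here: the content of the lemma is essentially that the ``triangle'' of pairs surviving after puncturing a partition of $\Omega_8$ into four pairs still gives codewords, and this is automatic from Lemma \ref{L:E8} once one notes that the four pairs must partition $\Omega_8$. The only point that needs a moment's care is verifying that the puncturing really lands in $H_7(4)$, which is immediate from the containment description $H_7=\{S\in H_8\mid 8\notin S\}$ given just before the lemma.
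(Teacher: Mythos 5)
Your argument is correct and is exactly the deduction from Lemma \ref{L:E8} that the paper has in mind (the paper simply asserts the lemma is ``immediate'' from Lemma \ref{L:E8} without writing out the details). The key steps — that the four pairs must be pairwise disjoint and hence partition $\Omega_8$, so the pair containing $8$ can be discarded, and that $H_7(4)=H_8(4)\cap P(\Omega_7)$ — are all verified correctly.
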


Let $\{e_i\mid i\in\Omega_8\}$ be a basis of $\R^{8}$ with $\langle e_i,e_j\rangle=2\delta_{i,j}$. 
The following sets $\Delta_{E_n}$, $n=6,7,8$, denote the root systems of type $E_n$:
\begin{align}
\Delta_{E_8}&=\{\pm e_i\mid i\in\Omega_8\}\cup\left\{\left. \sum_{i\in S}c_ie_i\ \right|\  c_i\in\{\pm\frac12\},\ S\in H_8(4)\right\};\label{Eq:typeE}\\
\Delta_{E_7}&=\{\pm e_i\mid i\in\Omega_7\}\cup\left\{\left. \sum_{i\in S}c_ie_i\ \right|\  c_i\in\{\pm\frac12\},\ S\in H_7(4)\right\};\label{Eq:typeE7}\\
\Delta_{E_6}&=\{\pm e_i\mid i\in\Omega_4\}\cup\left\{\left.\sum_{i\in S}c_ie_i\ \right|\ c_i\in\{\pm\frac12\},\ S\in H_7(4),\ \sum_{i\in S\cap\{5,6,7\}}c_i=0\right\}.
\label{Eq:typeE6}
\end{align}
By abuse of notations, we also use $E_n$ to denote the root lattice of type $E_n$ generated by $\Delta_{E_n}$.

\begin{proposition}\label{P:lengthE} Let $n\in\{6,7,8\}$.
Set $m=8,7,7$ if $n=8,7,6$, respectively.
Let $\beta=\sum_{i=1}^m x_ie_i\in E_n$ and set $S=\{i\in\Omega_m\mid x_i\neq0\}$. 
Then 
\begin{equation}
\ell_{E_n}(\beta)\ge\dfrac{1}{M_S}\sum_{i\in S}|x_i|,\quad \text{where}\ M_S=\max\left\{1,\left.\dfrac{|S\cap T|}{2}\ \right| T\in H_m(4)\right\}.\label{Eq:ellEn}
\end{equation}
In particular, the following hold:
\begin{enumerate}[{\rm (1)}]
\item $\ell_{E_n}(\beta)\ge\dfrac{1}{2}\sum_{i\in S}|x_i|$.
\item If $|S|=3$ or $S\in\binom{\Omega_m}{4}\setminus H_m(4)$, then $\ell_{E_n}(\beta)\ge\dfrac{2}{3}\sum_{i\in S}|x_i|$.
\item If $|S|\in \{1,2\}$, then $\ell_{E_n}(\beta)\ge\sum_{i\in S}|x_i|$.
\item If $n\in\{6,7\}$ and $S=\{5,6,7\}$, then $\ell_{E_n}(\beta)\ge\sum_{i\in S}|x_i|$.
\item If $|S|\in\{4,5\}$ and $\ell_{E_n}(\beta)=\dfrac{1}{2}\sum_{i\in S}|x_i|$, then $S\in H_m(4)$ and $|x_j|=|x_{j'}|$  for $j,j'\in S$.
\item If $|S|=3$ and $\ell_{E_n}(\beta)=\dfrac{2}{3}\sum_{i\in S}|x_i|$, then there exists $T\in H_m(4)$ such that $S\subset T$ and $|x_j|=|x_{j'}|$ for $j,j'\in S$.
\end{enumerate}

\end{proposition}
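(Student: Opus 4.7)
The plan is to deduce \eqref{Eq:ellEn} from Lemma \ref{L:lformula} applied with $S=\{i\in\Omega_m:x_i\neq 0\}$ (the support of $\beta$), and then extract each of (1)--(6) from this inequality together with its equality case. For any root $\alpha=\sum a_ie_i\in\Delta_{E_n}$, the quantity $\sum_{i\in S}|a_i|$ equals $1$ if $\alpha=\pm e_j$ with $j\in S$ (and $0$ otherwise), and equals $|S\cap T|/2$ if $\alpha=\sum_{j\in T}c_je_j$ with $T\in H_m(4)$ and $c_j\in\{\pm1/2\}$. Hence the $M_S$ of Lemma \ref{L:lformula} is bounded above by $\max\{1,|S\cap T|/2:T\in H_m(4)\}$, which yields \eqref{Eq:ellEn} at once (even for $E_6$, where some $T\in H_7(4)$ may fail to carry an actual root --- this only makes the true $M_S$ smaller, so the stated bound remains valid).

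For the corollaries (1)--(4) I would bound the stated $M_S$ combinatorially: (1) uses $|S\cap T|\le |T|=4$, giving $M_S\le 2$; (2) uses that when $|S|=3$ one has $|S\cap T|\le 3$, while when $|S|=4$ with $S\notin H_m(4)$ the inequality $|S\cap T|\le 3$ follows because $|S\cap T|=4$ would force $S=T\in H_m(4)$; (3) uses $|S\cap T|\le |S|\le 2$. For (4) I would enumerate
\[ H_7(4)=\{\{1,2,3,4\},\{1,2,5,6\},\{1,3,5,7\},\{2,3,6,7\},\{3,4,5,6\},\{2,4,5,7\},\{1,4,6,7\}\} \]
and observe directly that none of its members contains $\{5,6,7\}$, so $|S\cap T|\le 2$ for all $T$ and $M_S\le 1$.

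For the equality statements (5) and (6) I would invoke the second assertion of Lemma \ref{L:lformula}: when $\beta=\sum_{j=1}^{\ell}\alpha_j$ realises the length and the bound is tight, each $\alpha_j$ must attain the defining maximum and the coordinates $y_{j,i}$ with $i\in S$ must be sign-consistent across $j$. In (6), $|S|=3$ and $M_S=3/2$ force each $\alpha_j$ to be of the form $\sum_{i\in T_j}c_{j,i}e_i$ with $T_j\in H_m(4)$ and $|S\cap T_j|=3$, i.e.\ $T_j\supset S$; any such $T_j$ exhibits a codeword $T\in H_m(4)$ containing $S$, and sign-consistency combined with $x_i=(\varepsilon(i)/2)\,|\{j:i\in T_j\}|$ yields $|x_i|=\ell/2$ on $S$. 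In (5) with $|S|=4$, $M_S=2$ forces $T_j=S$ for all $j$, whence $S\in H_m(4)$; sign-consistency again gives $|x_i|$ constant on $S$.

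The main obstacle is the case $|S|=5$ of (5), which I would need to show is impossible. Here the key input is that $H_m$ is a doubly-even code of minimum distance $4$, so any two distinct $T,T'\in H_m(4)$ satisfy $|T\cap T'|\in\{0,2\}$ and hence $|T\cup T'|\ge 6$. Equality in the bound of (5) forces every $T_j$ to lie inside $S$; but then $|S|=5<6$ forces all $T_j$ to coincide with a single $T\subset S$ of size $4$, so the support of $\beta$ lies inside $T$ and has size $\le 4$, contradicting $|S|=5$. Consequently equality cannot occur when $|S|=5$, and (5) holds vacuously in that range.
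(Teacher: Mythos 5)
Your proposal is correct and follows essentially the same route as the paper: apply Lemma \ref{L:lformula} with $S$ the support of $\beta$, bound $M_S$ by the combinatorics of $H_m(4)$ (in particular $|T\cap T'|\in\{0,2\}$ for distinct codewords), and invoke the equality case of that lemma for (5) and (6). The only differences are presentational — you enumerate $H_7(4)$ for (4) where the paper uses $\{5,6,7\}\subset\{5,6,7,8\}\in H_8(4)$, and you spell out the impossibility of equality when $|S|=5$, which the paper treats tersely.
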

\begin{proof} 
By the definition of $\Delta_{E_n}$, the value $M_S$ in Lemma \ref{L:lformula} is given as in \eqref{Eq:ellEn}.
Hence \eqref{Eq:ellEn} holds and the assertions (1), (2) and (3) follow. 
For (4), we assume that $n\in\{6,7\}$ and $S=\{5,6,7\}$.
It follows from $S\subset\{5,6,7,8\}\in H_8(4)$ that $|X\cap S|\in\{0,2\}$ for any $X\in H_7(4)$.
Hence $M_S=1$ and (4) holds.

Now let us prove (5).
Assume that $|S|\in\{4,5\}$ and $\ell_{E_n}(\beta)=({1}/{2})\sum_{i\in S}|x_i|$.
Set $\beta=\sum_{i=1}^{\ell_{E_n}(\beta)}\alpha_i$, $\alpha_i\in\Delta_{E_n}$.
By Lemma \ref{L:lformula} and \eqref{Eq:typeE}, $\alpha_i=\sum_{j\in T_i}c_je_j$ for some $c_j\in\{\pm1/2\}$ and $T_i\in H_m(4)$ and $T_i\subset S$.
Recall that for any $X,Y\in H_m(4)$, $|X\cap Y|\in2\Z$.
Since $|S|\in\{4,5\}$, we have $S=T=T_i$ for all $i$, and $S\in H_m(4)$.
By Lemma \ref{L:lformula}, $\alpha_i=\alpha_j$ for all $i,j$, and we have (5).
By a similar argument, we also have (6) by using the fact that if $|S|=3$, then there exists at most one $T\in H_m(4)$ such that $S\subset T$ (cf. Lemma \ref{L:E8}).
\end{proof}

\section{Conformal weights of simple current modules}

Let $\g$ be a simple Lie algebra and let $k\in\Z_{>0}$.
In this section, we study the conformal weights of simple current $K(\mathfrak{g},k)$-modules.

Let $\h$ be a Cartan subalgebra.
Let $\langle\cdot ,\cdot \rangle$ be the normalized Killing form of $\g$, i.e., $\langle\theta, \theta\rangle=2$ for the highest root $\theta$.
Let $\Delta_\g$ be the set of all roots of $\g$ and let $Q$ be the root lattice generated by $\Delta_\g$.
We use $Q_{L}$ to denote the sublattice of $Q$ generated by long roots.
Let $L_{\hat\g}(k,0)$ be the simple affine vertex operator algebra associated with $\g$ at level $k$.
For $\gamma\in Q$, let $$L_{\hat\g}(k,0)(\gamma):=\{v\in L_{\hat\g}(k,0)\mid h(0)v=\langle h, \gamma\rangle v\ \text{for all}\ h\in\mathfrak{h}\}$$
be the charge $\gamma$ subspace of $L_{\hat\g}(k,0)$ with respect to $\mathfrak{h}$.
Recall the decomposition \eqref{Eq:Lg1} from Section 2.
For $\beta+kQ_L\in Q/kQ_L$,
\begin{equation*}
V_{\frac{1}{\sqrt k}\beta+\sqrt k Q_L}\otimes M^{0, \beta+kQ_L}=\bigoplus_{\gamma\in\beta+kQ_L}L_{\hat\g}(k,0)(\gamma).
\end{equation*}
Comparing charge $\gamma$ subspaces in the both sides, we obtain, 
\begin{equation}
L_{\hat\g}(k,0)(\gamma)=M_{\mathfrak{h}}(k,\frac{1}{\sqrt k}\gamma)\otimes M^{0,\beta+kQ_L}.\label{Eq:confwt1}
\end{equation}
Here $M_{\mathfrak{h}}(k,\frac{1}{\sqrt k}\gamma)$ is an irreducible $M_{\mathfrak{h}}(k,0)$-submodule of $V_{\frac{1}{\sqrt k}\beta+\sqrt k Q_L}$ and $M_{\mathfrak{h}}(k,0)$ is the Heisenberg vertex operator algebra generated by $\h$.
Note that $\rho(M_{\mathfrak{h}}(k,\frac{1}{\sqrt k}\gamma))={\langle\gamma,\gamma\rangle}/{2k}$ and that $h(0)$ acts on $M_{\mathfrak{h}}(k,\frac{1}{\sqrt k}\gamma)$ as scalar $\langle h,\gamma\rangle$.

For $\alpha\in\Delta_\g$, let $E_\alpha\in\g$ be a root vector associated with $\alpha$.
By the construction (cf.\ \cite{FZ}), $L_{\hat\g}(k,0)(\gamma)$ is spanned by vectors of form
$$h_1(-n_1)\dots h_s(-n_s)E_{\alpha_{s+1}}(-n_{s+1})\dots E_{\alpha_t}(-n_t)\1,$$
where $\sum_{i=s+1}^t\alpha_i=\gamma$, $n_i\in\Z_{>0}$, $h_i\in\mathfrak{h}$ and $\alpha_i\in\Delta_\g$.
As the conformal weight of this vector is $\sum_{i=1}^tn_i$, the lowest conformal weight of $L_{\hat\g}(k,0)(\gamma)$ is at least the length $\ell=\ell_Q(\gamma)$ of $\gamma$ defined in Definition \ref{Def:length}.
In fact for $\gamma=\sum_{i=1}^{\ell}\alpha_i$, $\alpha_i\in\Delta_\g$, the vector $E_{\alpha_{1}}(-1)\dots E_{\alpha_{\ell}}(-1)\1\in L_{\hat\g}(k,0)(\gamma)$ has conformal weight $\ell$ if this vector is non-zero.
In particular, for $\alpha\in\Delta_\g$, the lowest conformal weight of $L_{\hat\g}(k,0)(\alpha)$ is one as $E_\alpha(-1)\1\neq0$.
Comparing the conformal weights of both sides in \eqref{Eq:confwt1}, we obtain the following proposition.

\begin{proposition}\label{P:lowest}Let $k\in\Z_{>0}$ and let $\beta+kQ_L\in Q/kQ_L$.

\begin{enumerate}[{\rm (1)}]
\item $\rho(M^{0,\beta+kQ_L})\in-\dfrac{|\beta|^2}{2k}+\Z$.
\item For any $\gamma\in\beta+kQ_L$, $\rho(M^{0,\beta+kQ_L})\ge \ell_Q(\gamma)-\dfrac{|\gamma|^2}{2k}$. 
\item If $\beta+kQ_L$ contains a root $\gamma\in\Delta_\g$, then $\rho(M^{0,\beta+kQ_L})=1-\dfrac{|\gamma|^2}{2k}$.
\end{enumerate}
\end{proposition}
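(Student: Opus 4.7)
The plan is to deduce all three parts directly from the identity
\begin{equation*}
L_{\hat\g}(k,0)(\gamma)=M_{\mathfrak{h}}(k,\tfrac{1}{\sqrt k}\gamma)\otimes M^{0,\beta+kQ_L}
\end{equation*}
established in (4.1) combined with the spanning-set description of $L_{\hat\g}(k,0)(\gamma)$ discussed immediately before the proposition. The key observation is that the lowest conformal weight of the right-hand side equals $|\gamma|^2/(2k)+\rho(M^{0,\beta+kQ_L})$, since $\rho(M_{\mathfrak{h}}(k,\gamma/\sqrt k))=|\gamma|^2/(2k)$. Hence everything reduces to comparing this quantity with the lowest conformal weight of the charge-$\gamma$ subspace of $L_{\hat\g}(k,0)$.

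For part (1), I would specialize to $\gamma=\beta$. The subspace $L_{\hat\g}(k,0)(\beta)$ is nonzero (it contains the lowest weight vectors of the summand in \eqref{Eq:Lg1} corresponding to $\beta+kQ_L$) and inherits an integer conformal-weight grading from $L_{\hat\g}(k,0)$, which is a $\Z_{\ge0}$-graded vertex operator algebra. So the minimum conformal weight of $L_{\hat\g}(k,0)(\beta)$ is a non-negative integer $N$, and the identity gives $\rho(M^{0,\beta+kQ_L})=N-|\beta|^2/(2k)\in-|\beta|^2/(2k)+\Z$.

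For part (2), I would use the explicit spanning set of $L_{\hat\g}(k,0)(\gamma)$ by vectors of the form $h_1(-n_1)\cdots h_s(-n_s)E_{\alpha_{s+1}}(-n_{s+1})\cdots E_{\alpha_t}(-n_t)\1$ with $\sum_{i=s+1}^t\alpha_i=\gamma$, $n_i\in\Z_{>0}$. Each such vector has conformal weight $\sum_i n_i\ge t-s\ge\ell_Q(\gamma)$ by the very definition of length. Thus the lowest conformal weight of $L_{\hat\g}(k,0)(\gamma)$ is at least $\ell_Q(\gamma)$, and comparing with $|\gamma|^2/(2k)+\rho(M^{0,\beta+kQ_L})$ yields the desired inequality.

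For part (3), with $\gamma\in\Delta_\g\cap(\beta+kQ_L)$ we have $\ell_Q(\gamma)=1$, so part (2) gives $\rho(M^{0,\beta+kQ_L})\ge 1-|\gamma|^2/(2k)$. For the reverse inequality, the vector $E_\gamma(-1)\1$ lies in $L_{\hat\g}(k,0)(\gamma)$, is nonzero, and has conformal weight exactly $1$; hence the lowest conformal weight of $L_{\hat\g}(k,0)(\gamma)$ is at most $1$, giving the matching upper bound on $\rho(M^{0,\beta+kQ_L})$. There is no serious obstacle here—all three statements are essentially bookkeeping on the decomposition (4.1) together with the length lower bound for charge subspaces—so the only care needed is to confirm that the relevant charge subspaces are nonzero (for (1) and (3)) so that the ``lowest conformal weight'' is meaningful.
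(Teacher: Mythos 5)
Your proposal is correct and follows essentially the same route as the paper: the paper also derives all three parts by comparing conformal weights on both sides of the decomposition $L_{\hat\g}(k,0)(\gamma)=M_{\mathfrak{h}}(k,\frac{1}{\sqrt k}\gamma)\otimes M^{0,\beta+kQ_L}$, using the spanning set of the charge-$\gamma$ subspace for the length lower bound and the nonvanishing of $E_\gamma(-1)\1$ for part (3). Your explicit attention to the nonvanishing of the relevant charge subspaces and to the integrality of the grading in part (1) only makes explicit what the paper leaves implicit.
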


In the rest of this section, we will prove the following theorem, which
will play important roles  in determining the automorphism group of $K(\g,k)$ in Section \ref{S:5}.

 \begin{theorem}\label{Thm:key} Let $k\in\Z_{\ge2}$.
Let $\g$ be a simple Lie algebra and let $Q$ be the root lattice of $\g$.
Let $Q_L$ be the sublattice of $Q$ generated by long roots; note that $Q=Q_L$ if $\g$ is simply laced.
Let $M^{0,\beta+kQ_L}$ be the simple current $K(\g,k)$-module associated with $\beta+kQ_L\in Q/kQ_L$.
Let $r$ be the lacing number of $\g$.
Then for $t\in\{1,r\}$, $\rho(M^{0,\beta+kQ_L})=1-\dfrac{1}{tk}$ if and only if $\beta+kQ_L$ contains a root of norm $2/t$.
\end{theorem}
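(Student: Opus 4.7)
The ``if'' direction of the theorem is immediate from Proposition \ref{P:lowest}(3), so the heart of the proof lies in the ``only if'' direction, which I would handle by contrapositive. Assume that the coset $\beta+kQ_L$ contains no root of norm $2/t$; the goal is then to show $\rho(M^{0,\beta+kQ_L})>1-1/(tk)$. A first observation is that Proposition \ref{P:lowest}(1), combined with the fact that $Q_L$ is even and $\langle Q,Q_L\rangle\subseteq\Z$, forces every $\gamma\in\beta+kQ_L$ to have norm in $\{2/t+2kj\mid j\in\Z_{\geq0}\}$ under the hypothesis $\rho=1-1/(tk)$. Fix a minimum-norm representative $\gamma_0$ of the coset and write $m=(|\gamma_0|^2-2/t)/(2k)\in\Z_{\geq0}$. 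The target becomes to prove $\ell_Q(\gamma_0)\geq m+2$, because Proposition \ref{P:lowest}(2) then yields $\rho\geq\ell_Q(\gamma_0)-|\gamma_0|^2/(2k)>1-1/(tk)$, the desired contradiction.

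For $m=0$ one has $|\gamma_0|^2=2/t$, and by hypothesis $\gamma_0$ is not a root; since the only length-one vectors are roots, $\ell_Q(\gamma_0)\geq 2=m+2$ as required. For $m\geq1$ I would exploit two ingredients. First, the minimality of $|\gamma_0|^2$ in its coset yields $|\langle\gamma_0,\alpha\rangle|\leq k|\alpha|^2/2$ for every $\alpha\in Q_L$; in the explicit coordinate realizations of Section 3 this translates into concrete bounds on differences or sums of coordinates of $\gamma_0$ by $k$. Second, Propositions \ref{P:length} and \ref{P:lengthE} give lower bounds for $\ell_Q(\gamma_0)$ in terms of $\sum|x_i|$ (possibly with a code-theoretic normalization constant in the $E_n$ case). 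Combining the two, one checks by a direct calculation that whenever $\gamma_0$ is of minimum norm $2/t+2km$ with $m\geq1$, the length is at least $m+2$: for instance in type $A_n$ the inequalities $|x_i-x_j|\leq k$ give $\sum x_i^2\leq (k/2)\sum|x_i|$, hence $\ell_Q(\gamma_0)\geq \sum|x_i|/2\geq |\gamma_0|^2/k=2/(tk)+2m$, which forces $\ell_Q(\gamma_0)\geq m+2$.

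The main obstacle is the $E_n$ case, where the constant $M_S$ in Proposition \ref{P:lengthE} depends on whether the support $S$ of $\gamma_0$ lies inside a $4$-element codeword of the Hamming code $H_m(4)$ (giving $M_S=2$) or not (giving $M_S=1$), so the analysis must split into several subcases distinguished by $|S|$ and the position of $S$ relative to $H_m(4)$, and the equality statements in Proposition \ref{P:lengthE}(5)--(6) have to be combined with the minimum-norm constraint to rule out the borderline configurations. The classical types $A_n,B_n,C_n,D_n$ follow from direct coordinate estimates via Proposition \ref{P:length}, while the exceptional $F_4$ and $G_2$ reduce to finite verifications using the explicit root data in \eqref{Eq:typeF} and \eqref{Eq:typeG}.
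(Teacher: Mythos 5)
Your overall strategy coincides with the paper's: the ``if'' direction is Proposition \ref{P:lowest}(3), and the ``only if'' direction is obtained by choosing a good representative $\gamma$ of the coset and playing the length lower bounds of Section 3 against the norm via Proposition \ref{P:lowest}(1),(2). Your repackaging --- take a minimum-norm representative $\gamma_0$, set $m=(|\gamma_0|^2-2/t)/(2k)\in\Z_{\ge0}$, and aim for $\ell_Q(\gamma_0)\ge m+2$ --- is equivalent to the paper's target inequality, and your type-$A_n$ argument (using $\sum_i x_i=0$ and $|x_i-x_j|\le k$ to get $\sum_i x_i^2\le (k/2)\sum_i|x_i|$, hence $\ell_Q(\gamma_0)\ge 2m+1\ge m+2$ for $m\ge1$) is correct. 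The paper works instead with coordinate-bounded representatives produced by explicit reduction lemmas (Lemmas \ref{L:upperAD}, \ref{L:keyB}, \ref{L:keyC}, \ref{L:upperE}, \ref{L:upperE6}, \ref{L:keyF}, \ref{L:keyG}) rather than with the minimum-norm representative, but that is an organizational difference, not a conceptual one.

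The gap is that the proof is actually executed only for type $A_n$; for every other type the argument is deferred, and the general-purpose estimates you supply demonstrably fall short of finishing them. Concretely, in type $E_n$ (with the paper's normalization $\langle e_i,e_j\rangle=2\delta_{ij}$) minimality of $|\gamma_0|^2$ gives $|x_i|\le k/2$, hence $\sum_{i\in S}x_i^2\le (k/2)\sum_{i\in S}|x_i|$, and Proposition \ref{P:lengthE}(1) gives $\ell_{E_n}(\gamma_0)\ge \frac12\sum_{i\in S}|x_i|\ge \frac1k\sum_{i\in S}x_i^2=\frac1k+m$; this yields only $\ell_{E_n}(\gamma_0)\ge m+1$, exactly one short of the needed $m+2$. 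Closing that last unit is the hard part of the theorem: it requires controlling the support $S$ relative to the Hamming code $H_m(4)$ (the content of Lemmas \ref{L:E8}, \ref{L:E7}, \ref{L:upperE}, \ref{L:E6k/2}, \ref{L:upperE6}) together with the equality analysis in Proposition \ref{P:lengthE}(5),(6), none of which you carry out --- for instance, a configuration such as $\gamma_0=e_a+e_b+e_c$ with $\{a,b,c\}$ inside a codeword has length $2$ and is eliminated only because minimality fails for it, an interaction your sketch does not verify. Likewise, the claim that $F_4$ and $G_2$ ``reduce to finite verifications'' is inaccurate: the reduced representatives there still have coordinates ranging up to $k$ or $k/2$ (including the half-integer cosets of $F_4$ and the $\frac13\Z$-coordinate cosets of $G_2$), so one needs $k$-dependent quadratic estimates exactly as in the paper's corresponding subsections, not a finite check. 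As it stands, the proposal is a correct plan with one worked case rather than a proof.
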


By the former equation in \eqref{Eq:rhoconf} and this theorem, we obtain the following: 
\begin{corollary}\label{C} Let $t\in\{1,r\}$ and let $\Delta_t$ be the subset of $\Delta_\g$ consisting of roots of norm $2/t$.
Then the action of $\Aut(K(\g,k))$ on $\irr(K(\g,k))_{sc}$ preserves $\{M^{0,\gamma+kQ_L}\mid \gamma\in\Delta_t\}$.
\end{corollary}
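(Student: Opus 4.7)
The ``if'' direction is immediate from Proposition \ref{P:lowest}(3): if $\gamma \in \beta + kQ_L$ is a root of norm $2/t$, then $\rho(M^{0,\beta+kQ_L}) = 1 - |\gamma|^2/(2k) = 1 - 1/(tk)$.

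For the converse, assume $\rho(M^{0,\beta+kQ_L}) = 1 - 1/(tk)$. By Proposition \ref{P:lowest}(2), every $\gamma \in \beta + kQ_L$ satisfies
\begin{equation}
\ell_Q(\gamma) \le 1 - \frac{1}{tk} + \frac{|\gamma|^2}{2k}, \label{eq:sketchkey}
\end{equation}
and by part (1) of the same proposition, $|\beta|^2 \equiv 2/t \pmod{2k}$, so in particular $\beta \notin kQ_L$. The plan is to choose a representative $\gamma = \sum_i x_i e_i$ of $\beta + kQ_L$ whose coordinates lie in a fundamental domain for $kQ_L$ of smallest possible range, and then combine \eqref{eq:sketchkey} with the length lower bounds of Propositions \ref{P:length} and \ref{P:lengthE}. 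The resulting inequality should be tight enough to force $\gamma$ to be a single root of norm $2/t$.

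To illustrate the mechanism in simply laced type $A_n$ (where $t=r=1$): choosing $\gamma$ with $|x_i| \le k/2$, Proposition \ref{P:length}(1) gives $\ell_Q(\gamma) \ge (1/2)\sum|x_i|$, and \eqref{eq:sketchkey} becomes $\sum_i |x_i|(k - |x_i|) \le 2(k-1)$. Since each term on the left is $\ge (k/2)|x_i|$ under our choice of representative, this forces $\sum |x_i| < 4$; combined with the parity constraint $\sum |x_i| \in 2\Z$ coming from $\sum x_i = 0$ and with $\gamma \ne 0$, one concludes $\sum |x_i| = 2$ and $\gamma \in \Delta_{A_n}$. The simply laced cases $D_n, E_6, E_7, E_8$ should follow the same scheme, plugging the relevant length bound from Proposition \ref{P:length} or \ref{P:lengthE} into \eqref{eq:sketchkey}. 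For non-simply laced types, both $t = 1$ (long-root case) and $t = r$ (short-root case) need separate treatment, and the representative $\gamma$ must be chosen so as to minimize the relevant contribution to the length (long-root contribution if $t = 1$, short-root contribution if $t = r$).

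The main obstacles I expect are, first, the exceptional types $E_7$ and $E_8$, where parts (5)--(6) of Proposition \ref{P:lengthE} together with the Hamming-code structure (Lemmas \ref{L:E8}, \ref{L:E7}) must be invoked to rule out near-saturation configurations with $|S| \in \{3,4,5\}$ that would otherwise yield $\gamma$ which is not a root; and second, the types $C_n$ and $F_4$, where Proposition \ref{P:length}(2) is unavailable for $C_n$ and one has to rely on the weighted bound (1) alone, so the choice of fundamental domain becomes delicate and the separation into short- and long-root contributions must be handled carefully (in $F_4$ the two root lengths are independently present in $Q_L$-cosets). In every tight case, the inequality \eqref{eq:sketchkey} should saturate only when $\gamma$ itself is a single root of the required norm, which gives exactly the conclusion.
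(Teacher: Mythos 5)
Your proposal does not prove the corollary as stated: what you have sketched is the preceding Theorem \ref{Thm:key}, namely the characterization ``$\rho(M^{0,\beta+kQ_L})=1-\tfrac{1}{tk}$ if and only if $\beta+kQ_L$ contains a root of norm $2/t$.'' The corollary is a statement about the action of $\Aut(K(\g,k))$ on $\irr(K(\g,k))_{sc}$, and nowhere in your argument does an automorphism $\sigma$, a conjugate module $M\circ\sigma$, or the invariance $\rho(M)=\rho(M\circ\sigma)$ appear; your closing claim that the saturation analysis ``gives exactly the conclusion'' is therefore not correct. The missing (short but essential) step is precisely the paper's entire proof: by the first equation in \eqref{Eq:rhoconf}, conjugation preserves conformal weights, and by Theorem \ref{Thm:key} the set $\{M^{0,\gamma+kQ_L}\mid\gamma\in\Delta_t\}$ is exactly the set of modules $M^{0,\beta+kQ_L}$ of conformal weight $1-\tfrac{1}{tk}$, so the action preserves it. Since Theorem \ref{Thm:key} is stated immediately before the corollary, you could simply have cited it instead of re-deriving it.

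Even read as an outline of Theorem \ref{Thm:key}, your sketch follows the paper's strategy (choose a good coset representative, then play the length lower bounds of Propositions \ref{P:length} and \ref{P:lengthE} against Proposition \ref{P:lowest}(2)), but the one case you work out contains an error: in type $A_n$ a representative of $\beta+kQ$ with all $|x_i|\le k/2$ need not exist, because the coordinates are constrained by $\sum_i x_i=0$. For instance, for $A_2$ and $k=3$ the coset of $e_1+e_2-2e_3$ forces every coordinate to be $\equiv 1 \pmod 3$, so any representative with $|x_i|\le 3/2$ would be $(1,1,1)$, whose coordinate sum is not zero. The paper's Lemma \ref{L:upperAD}(1) only achieves $1\le|x_i|\le k-1$ on the support, and the subsequent completing-the-square computation is organized around that weaker bound. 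The remaining types ($D_n$, $E_n$, and all non simply laced cases, including the delicate $C_n$, $F_4$ and $G_2$ analyses) are named but not carried out; they occupy the bulk of Section 4 of the paper, so as written your argument is a plan rather than a proof.
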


We will prove Theorem \ref{Thm:key} in the following subsections based on case-by-case analysis.
Proposition \ref{P:lowest} (3) shows `if part'.
We will show `only if part'. 

\subsection{Type $A_n$ and $D_n$}
Set $X_n=A_n$ ($n\ge1$) or $X_n=D_n$ $(n\ge4)$.
Let $\Delta_{X_n}$ be the root system of type $X_n$ given in \eqref{Eq:typeA} or \eqref{Eq:typeD} and let $Q=X_n$ be the root lattice generated by $\Delta_{X_n}$.
Note that $Q=Q_L$.

\begin{lemma}\label{L:upperAD} Let $k\in\Z_{\ge2}$ and let $\beta+kQ\in Q/kQ$ with $\beta\notin kQ$.
\begin{enumerate}[{\rm (1)}]
\item If $X_n=A_n$, then there exist $S\subset\Omega_{n+1}$ and $\sum_{i\in S}x_ie_i\in \beta+kQ$ such that $|S|\ge2$ and $1\le|x_i|\le k-1$ for all $i\in S$.
\item If $X_n=D_n$, then there exist $S\subset\Omega_n$ and $\sum_{i\in S}x_ie_i\in \beta+kQ$ satisfying one of the following:
\begin{enumerate}[{\rm (i)}]
\item $|S|\ge2$ and $1\le |x_i|\le k-1$ for all $i\in S$;
\item $S=\{a\}$ and $1\le |x_a|\le k$.
\end{enumerate}
\end{enumerate}
\end{lemma}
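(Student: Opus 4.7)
The plan is to reduce each coordinate of $\beta$ modulo $k$ into the range $\{0,1,\dots,k-1\}$ and then subtract $k$ from a carefully chosen subset of coordinates so that the result still lies in the coset $\beta+kQ$. This adjustment step is necessary because $kQ$ is a proper sublattice of $k\Z^{n+1}$ (respectively $k\Z^n$), imposing a linear constraint on the coordinate sum. The same strategy works for both cases, but the constraint for $D_n$ is an evenness condition rather than a vanishing condition, which is what forces the exceptional representative $\pm ke_a$ in part (2).

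For (1), I would write $\beta=\sum_{i\in\Omega_{n+1}}x_ie_i$ with $\sum x_i=0$ and pick $r_i\in\{0,1,\dots,k-1\}$ with $r_i\equiv x_i\pmod{k}$, setting $q_i=(x_i-r_i)/k$. Then $\sum r_i=km$ for some $m\ge 0$; the case $m=0$ would force all $r_i=0$ and hence $\beta=k\sum q_ie_i$ with $\sum q_i=0$, contradicting $\beta\notin kQ$, so $m\ge 1$. Let $N=|\{i\mid r_i\neq 0\}|$. From $km=\sum r_i\le(k-1)N$ one gets $N\ge km/(k-1)>m$, so I can select a subset $T\subset\{i\mid r_i\neq 0\}$ of size $m$ and define $x'_i=r_i-k$ for $i\in T$ and $x'_i=r_i$ otherwise. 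Then $\sum x'_i=0$, every nonzero $x'_i$ lies in $\{-(k-1),\dots,-1\}\cup\{1,\dots,k-1\}$, and the support $S=\{i\mid x'_i\neq 0\}=\{i\mid r_i\neq 0\}$ has size $N\ge 2$.

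For (2) I would apply the same reduction to obtain $\beta'=\sum r_ie_i$ with $r_i\in\{0,\dots,k-1\}$ and $q_i=(x_i-r_i)/k$. Since $\beta-\beta'=k\sum q_ie_i$, the vector $\beta'$ lies in $\beta+kD_n$ if and only if $\sum q_i$ is even. If $\sum q_i$ is even, then $|S|=|\{i\mid r_i\neq 0\}|\ge 1$ (otherwise $\beta\in kD_n$), yielding (i) when $|S|\ge 2$ and (ii) when $|S|=1$. If $\sum q_i$ is odd, I would flip the parity by modifying one coordinate: if some $r_j\neq 0$, replace $r_j$ by $r_j-k\in\{-(k-1),\dots,-1\}$, which leaves $|S|$ unchanged and produces a representative satisfying (i) or (ii); if instead all $r_i=0$, pick any $a$ and take $-ke_a$ as the representative, noting that the coordinate sum of the difference is $\sum q_i+1$, which is now even. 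The hardest part is precisely this last subcase: it produces a representative with a single coordinate of absolute value exactly $k$, which is the reason the lemma must allow $|x_a|\le k$ in case (ii) rather than only $|x_a|\le k-1$.
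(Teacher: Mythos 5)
Your proof is correct, and it takes a genuinely different route from the paper's. The paper argues by descent: starting from any representative $\gamma=\sum_{i\in S}x_ie_i$, whenever some coordinate satisfies $|x_a|\ge k$ it subtracts $k$ times a suitable lattice element ($\varepsilon_ak(e_a-e_b)$ with $\varepsilon_ax_b<0$ for $A_n$; $k(\varepsilon_ae_a+\varepsilon_be_b)$ or $2k\varepsilon_ae_a$ for $D_n$) so as to strictly decrease the sum of absolute values of the coordinates, and terminates by well-ordering. You instead do a one-shot modular reduction of each coordinate into $\{0,\dots,k-1\}$ and then repair the coset constraint ($\sum$ of coefficients equal to $0$ for $A_n$, even for $D_n$) by subtracting $k$ from a chosen subset of the nonzero residues; your counting step $N\ge km/(k-1)>m$ correctly guarantees enough nonzero residues to absorb the correction in the $A_n$ case, and your parity analysis for $D_n$ correctly isolates the exceptional representative $\pm ke_a$, which is exactly the reason the lemma allows $|x_a|\le k$ in case (ii). The paper's descent has the advantage of transferring verbatim to the non-simply-laced analogues (it is reused word-for-word for $B_n$ in Lemma \ref{L:keyB} and adapted for $F_4$ and $G_2$), while your construction is more explicit and makes visible that the support of the final representative is precisely $\{i\mid x_i\not\equiv0\pmod k\}$, with no termination argument needed.
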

\begin{proof} Set $m=n+1$ and $m=n$ if $X_n=A_n$ and $X_n=D_n$, respectively.
Let $\gamma=\sum_{i\in S}x_ie_i\in \beta+kQ$, $S\subset\Omega_m$, such that $x_i\neq0$ for $i\in S$.
Set $\varepsilon_i=x_i/|x_i|$ for $ i\in S$.
By the definition of $\Delta_{X_n}$, $|x_i|\in\Z$ for all $i\in S$.

Assume that $X_n=A_n$ and that there exists $ a\in S$ such that $|x_a|\ge k$.
It follows from $\sum_{i\in S}x_i=0$ that $|S|\ge2$ and there exists $b\in S$ such that $\varepsilon_a x_b<0$.
Then $\gamma-\varepsilon_a k(e_a-e_b)\in \beta+kQ$ and the sum of absolute values of all coefficients of this vector is smaller than that of $\gamma$.
Repeating this procedure, we obtain a desired vector.

Assume that $X_n=D_n$.
If $|S|\ge2$ and there exists $ a\in S$ such that $|x_a|\ge k$, then take $b\in S\setminus\{a\}$.
Then $\gamma-k(\varepsilon_ae_a+\varepsilon_be_b)\in \beta+kQ$ and the sum of absolute values of all coefficients of this vector is smaller than that of $\gamma$.
If $S=\{a\}$ and $|x_a|>k$, then $\gamma-2k\varepsilon_a e_a=(x_a-2k\varepsilon_a)e_a$ and $|x_a-2k\varepsilon_a|<|x_a|$.
Repeating this procedure, we find a vector in $\beta+kQ$ satisfying (i) or (ii).
\end{proof}

\begin{proof}[Proof of Theorem \ref{Thm:key} for type $A_n$ and $D_n$] We assume that $\rho(M^{0,\beta+kQ})=1-({1}/{k})$.
Since $\rho(M^{0,kQ})=0$, we have $\beta\notin kQ$.
Set $m=n+1$ and $m=n$ if $X_n=A_n$ and $X_n=D_n$, respectively.

First, we assume that $\beta$ satisfies the conditions in Lemma \ref{L:upperAD} (1) for $X_n=A_n$ or those in Lemma \ref{L:upperAD} (2) (i) for $X_n=D_n$; $\beta=\sum_{i\in S}x_ie_i$, $S\subset\Omega_m$ satisfies $1\le|x_i|\le k-1$ for all $ i\in S$ and $|S|\ge2$.
By Propositions \ref{P:length} and \ref{P:lowest} (2), we obtain
\begin{align*}
&\rho(M^{0,\beta+kQ})-(1-\frac{1}{k})\ge\ell_{X_n}(\beta)-\dfrac{|\beta|^2}{2k}-(1-\frac{1}{k})\\
\ge&\frac{1}{2}\sum_{i\in S}|x_i|-\frac{1}{2k}\sum_{i\in S}|x_i|^2-\frac{k-1}{k}\\
=&\frac{1}{2k}\left(\sum_{i\in S}({k-1}-|x_i|)(|x_i|-1)+(k-1)(|S|-2)\right)\ge0.
\end{align*}
By the assumption $\rho(M^{0,\beta+kQ})=1-({1}/{k})$, all the equalities hold.
Since $k\ge2$, we have $|S|=2$, $|x_i|\in\{1,k-1\}$ for all $i\in S$ and $\ell_{X_n}(\beta)=(1/2)\sum_{i\in S}|x_i|$.
Set $S=\{a,b\}$.
If $X_n=A_n$, then $\beta\in\{\pm(e_a-e_b),\pm(k-1)(e_a-e_b)\}$ since $x_a+x_b=0$.
Hence $\beta+kQ$ contains a root in $\{\pm(e_a-e_b)\}$.
If $X_n=D_n$, then by Lemma \ref{L:lformula}, we have $|x_a|=|x_b|$.
Hence $\beta+kQ$ contains a root in $\{\pm(e_a\pm e_b)\}$.

By Lemma \ref{L:upperAD} (2), the remaining case is that $X_n=D_n$ and $\beta=x_ae_a$, $1\le |x_a|\le k$.
By Proposition \ref{P:length} (2), $\ell_{X_n}(\beta)\ge|x_a|$.
By Proposition \ref{P:lowest} (2), we obtain
\begin{align*}
\rho(M^{0,\beta+kQ})-(1-\frac{1}{k})\ge\dfrac{1}{2k}\left((|x_a|-1)(2k-1-|x_a|)+1\right)>0,
\end{align*}
which contradicts that $\rho(M^{0,\beta+kQ})=1-({1}/{k})$.
\end{proof}

\subsection{Type $B_n$}
Let $\Delta_{B_n}$ $(n\ge2)$ be the root system of type $B_n$ given in \eqref{Eq:typeB} and let $Q=B_n$ be the root lattice generated by $\Delta_{B_n}$.
The sublattice $Q_L$ of $Q$ generated by long roots is isometric to the root lattice of type $D_n$.
By exactly the same argument as in Lemma \ref{L:upperAD} (2), we obtain the following:
\begin{lemma}\label{L:keyB} Set $Q=B_n$ and let $k\in\Z_{\ge2}$.
Let $\beta+kQ_L\in Q/kQ_L$ with $\beta\notin kQ_L$.
Then, there exist $S\subset\Omega_n$ and $\sum_{i\in S}x_ie_i\in \beta+kQ_L$ satisfying one of the following:
\begin{enumerate}[{\rm (i)}]
\item $|S|\ge2$ and $1\le|x_i|\le k-1$ for all $i\in S$;
\item $S=\{a\}$ and $1\le |x_a|\le k$.
\end{enumerate}
\end{lemma}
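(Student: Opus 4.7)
The plan is to mirror, essentially verbatim, the reduction argument used in the proof of Lemma~\ref{L:upperAD}(2). The only structural input needed is that the sublattice $Q_L \subset Q = B_n$ generated by the long roots is exactly the $D_n$ root lattice inside $\bigoplus_{i=1}^n \Z e_i$; in particular $\varepsilon_a e_a + \varepsilon_b e_b \in Q_L$ for any distinct $a, b \in \Omega_n$ and any signs $\varepsilon_a, \varepsilon_b \in \{\pm 1\}$, and $2 e_a = (e_a + e_b) + (e_a - e_b) \in Q_L$ for each $a$. These are precisely the ingredients that power the $D_n$ case in Lemma~\ref{L:upperAD}(2).

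Starting from an arbitrary representative $\gamma = \sum_{i \in S} x_i e_i \in \beta + kQ_L$ with $x_i \in \Z \setminus \{0\}$ for $i \in S$ and $\varepsilon_i = x_i/|x_i|$, I would iterate two reductions, each of which stays inside $\beta + kQ_L$ and strictly decreases the nonnegative integer $\sum_{i \in S}|x_i|$. First, if $|S| \ge 2$ and some $|x_a| \ge k$, pick any $b \in S \setminus \{a\}$ and replace $\gamma$ by $\gamma - k(\varepsilon_a e_a + \varepsilon_b e_b)$; a short calculation (splitting on whether $|x_b| \le k$) shows the sum of absolute values drops by $2\min(|x_b|, k) > 0$. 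Second, if $|S| = 1$ and $|x_a| > k$, replace $\gamma$ by $\gamma - 2k\varepsilon_a e_a$; then the new absolute value $\bigl||x_a| - 2k\bigr|$ is strictly smaller than $|x_a|$.

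Since $\sum_{i \in S}|x_i|$ is a strictly decreasing nonnegative integer, the iteration terminates. At termination, $\gamma$ is either in case (i) (that is, $|S| \ge 2$ with all $|x_i| \le k-1$), in case (ii) ($|S| = 1$ with $1 \le |x_a| \le k$), or else $\gamma = 0$. The last possibility would force $\beta \in kQ_L$, contradicting the hypothesis, so (i) or (ii) must hold. There is no substantive obstacle here; the only subtle point worth double-checking is that the hypothesis $\beta \notin kQ_L$ rules out the degenerate endpoint $\gamma = 0$, and this is automatic because every reduction preserves the coset $\beta + kQ_L$.
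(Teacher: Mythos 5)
Your proposal is correct and is exactly the paper's argument: the paper proves this lemma by the remark ``by exactly the same argument as in Lemma~\ref{L:upperAD}(2)'', and your write-up simply carries out that $D_n$-type reduction explicitly, using that $Q_L\cong D_n$ contains $\varepsilon_ae_a+\varepsilon_be_b$ and $2e_a$. The bookkeeping (the decrease by $2\min(|x_b|,k)$, and the exclusion of the endpoint $\gamma=0$ via $\beta\notin kQ_L$) all checks out.
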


\begin{proof}[Proof of Theorem \ref{Thm:key} for type $B_n$] We assume that $\rho(M^{0,\beta+kQ_L})=1-({1}/{tk})$ for $t\in\{1,2\}$.
Since $\rho(M^{0,kQ_L})=0$, we have $\beta\notin kQ_L$.
Notice that assertions of Lemmas \ref{L:upperAD} (2) and  \ref{L:keyB} are the same.
Hence if $t=1$, then $\beta+kQ_L$ contains a long root by the same argument for $D_n$.

Assume that $t=2$.
We first assume that $\beta$ satisfies (i) in Lemma \ref{L:keyB}; $\beta=\sum_{i\in S}x_ie_i$, $|S|\ge2$ and $1\le|x_i|\le k-1$ for all $i\in S$.
By Proposition \ref{P:lowest} (1) and $t=2$, 
we have $\langle\beta,\beta\rangle\in2k\Z+1$.
Then $\sum_{i\in S}|x_i|\in2\Z+1$.
By Propositions \ref{P:length} (1) and \ref{P:lowest} (2), we have
\begin{align*}
&\rho(M^{0,\beta+kQ_L})-(1-\frac{1}{2k})
\ge\frac{1}{2}(\sum_{i\in S}|x_i|+1)-\frac{1}{2k}\sum_{i\in S}|x_i|^2-\frac{2k-1}{2k}\\
=&\frac{1}{2k}\left(\sum_{i\in S}(k-1-|x_i|)(|x_i|-1)+(k-1)(|S|-1)\right)> 0,
\end{align*}
which contradicts that $\rho(M^{0,\beta+kQ_L})=1-({1}/{2k})$.

Next, we assume that $\beta$ satisfies (ii) in Lemma \ref{L:keyB}; $\beta=x_ae_a$ and $1\le |x_a|\le k$.
By Propositions \ref{P:length} (2) and \ref{P:lowest} (2),
$$\rho(M^{0,\beta+kQ_L})-(1-\frac{1}{2k})\ge |x_a|-\frac{|x_a|^2}{2k}-\frac{2k-1}{2k}=\dfrac{1}{2k}(|x_a|-1)(2k-1-|x_a|)\ge0.$$
By the assumption $\rho(M^{0,\beta+kQ_L})=1-({1}/{2k})$, all the equalities hold.
Hence $|x_a|=1$, and $\beta$ is a short root.
\end{proof}

\subsection{Type $C_n$}

Let $\Delta_{C_n}$ $(n\ge2)$ be the root system of type $C_n$ given in \eqref{Eq:typeC} and let $Q=C_n$ be the root lattice of type $C_n$ generated by $\Delta_{C_n}$.
Then the sublattice $Q_L$ of $Q$ generated by long roots is isometric to the root lattice of type $A_1^n$.
As $kQ_L=\bigoplus_{i=1}^n2k\Z e_i$, we obtain the following:

\begin{lemma}\label{L:keyC} Set $Q=C_n$ and let $k\in\Z_{\ge2}$.
Let $\beta+kQ_L\in Q/kQ_L$ with $\beta\notin kQ_L$.
Then there exist $\emptyset\neq S\subset\Omega_n$ and $\sum_{i\in S}x_ie_i\in \beta+kQ_L$ such that $1\le|x_i|\le k$ for all $i\in S$.
\end{lemma}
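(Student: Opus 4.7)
The plan is to exploit the very simple structure of $kQ_L$ in this case. Because $\Delta_{C_n}$ explicitly contains the long roots $\pm 2e_i$ for each $i\in\Omega_n$, the sublattice $Q_L$ generated by long roots is exactly $\bigoplus_{i=1}^n 2\Z e_i$, so $kQ_L=\bigoplus_{i=1}^n 2k\Z e_i$, as already noted before the lemma. This means that cosets of $kQ_L$ in the ambient space split as a direct product of one-dimensional cosets: two vectors $\sum y_i e_i$ and $\sum y'_i e_i$ lie in the same coset of $kQ_L$ if and only if $y_i\equiv y'_i\pmod{2k}$ for every $i$.

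From this I would simply reduce $\beta$ coordinate by coordinate. Writing $\beta=\sum_{i=1}^n y_i e_i$, for each $i$ I would choose the unique representative $x_i$ of $y_i$ modulo $2k$ lying in the interval $\{-(k-1),-(k-2),\ldots,-1,0,1,\ldots,k\}$, which has exactly $2k$ elements. Then $\gamma:=\sum_{i=1}^n x_i e_i$ lies in $\beta+kQ_L$ and satisfies $|x_i|\le k$ for every $i$. Setting $S:=\{i\in\Omega_n\mid x_i\neq 0\}$, we automatically have $1\le|x_i|\le k$ for $i\in S$, and $\gamma=\sum_{i\in S}x_i e_i$.

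The only remaining point is to show $S\neq\emptyset$, but this is immediate: if $S$ were empty then $\gamma=0$, forcing $\beta\in kQ_L$, which contradicts the hypothesis. There is essentially no obstacle in this case; the argument is strictly easier than the $A_n$, $D_n$, $B_n$ analogues (Lemmas \ref{L:upperAD} and \ref{L:keyB}) precisely because $Q_L$ here is an orthogonal direct sum of rank-one lattices along the basis $\{e_i\}$, so the reduction does not require the iterative ``subtract a root to shrink the coefficients'' procedure used in those earlier lemmas.
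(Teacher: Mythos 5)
Your argument is correct and is exactly the paper's: the paper derives the lemma immediately from the observation that $kQ_L=\bigoplus_{i=1}^n 2k\Z e_i$, which is precisely the coordinatewise reduction modulo $2k$ you carry out. Your write-up just makes the choice of representatives and the nonemptiness of $S$ explicit.
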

\begin{proof}[Proof of Theorem \ref{Thm:key} for type $C_n$] We assume that $\rho(M^{0,\beta+kQ_L})=1-({1}/{tk})$ for $t\in\{1,2\}$.
Since $\rho(M^{0,kQ_L})=0$, we have $\beta\notin kQ_L$.
By Lemma \ref{L:keyC}, we may assume that $\beta=\sum_{i\in S}x_ie_i$ satisfies $1\le|x_i|\le k$ for all $ i\in S$, and $|S|\ge1$.

First, we assume that $|S|\ge2$.
By Propositions \ref{P:length} (1) and \ref{P:lowest} (2)
\begin{align*}
&\rho(M^{0,\beta+kQ_L})-(1-\frac{1}{tk})
\ge\frac{1}{2}\sum_{i\in S}|x_i|-\frac{1}{4k}\sum_{i\in S}|x_i|^2-\frac{tk-1}{tk}\\
=&\frac{1}{4k}\left(\sum_{i\in S}(2k-1-|x_i|)(|x_i|-1)+(2k-1)(|S|-2)+\frac{4}{t}-2\right)\ge0.
\end{align*}
By the assumption $\rho(M^{0,\beta+kQ_L})=1-({1}/{kt})$, all the equalities hold.
Then $t=2$, $|S|=2$ and $|x_i|=1$ for all $i\in S$.
Hence $\beta$ is a short root.

Next, we assume that $S=\{a\}$.
Then $\beta=x_ae_a$ with $1\le |x_a|\le k$.
By the description of $\Delta_{C_n}$ in \eqref{Eq:typeC}, we have $x_a\in2\Z$ and $\langle\beta,\beta\rangle\in2\Z$.
Then $t=1$ by Proposition \ref{P:lowest} (1).
By Propositions \ref{P:length} (2) and \ref{P:lowest} (2),
$$\rho(M^{0,\beta+kQ_L})-(1-\frac{1}{k})\ge\frac{1}{4k}(2k|x_a|-|x_a|^2-4k+4)=\frac{1}{4k}(|x_a|-2)(2k-2-|x_a|)\ge0.$$
By the assumption $\rho(M^{0,\beta+kQ_L})=1-({1}/{k})$, all the equalities hold.
Hence $|x_a|=2$, and $\beta$ is a long root.
\end{proof}

\subsection{Type $E_n$}
For $n=6,7,8$, let $\Delta_{E_n}$ be the root systems of type $E_n$ given as in \eqref{Eq:typeE}, \eqref{Eq:typeE7}  and \eqref{Eq:typeE6}, respectively.
Let $Q=E_n$ be the root lattice of type $E_n$ generated by $\Delta_{E_n}$.
Note that $Q=Q_L$. 

\begin{lemma}\label{L:upperE} Let $k\in\Z_{\ge2}$ and $n\in\{7,8\}$ and set $Q=E_n$.
Let $\beta+kQ\in Q/kQ$ with $\beta\notin kQ$.
Then there exist $S\subset\Omega_n$ and $\sum_{i\in S}x_ie_i\in \beta+kQ$ such that $0<|x_i|\le k/2$ for all $i\in S$. Moreover,  one of the following  holds:
\begin{enumerate}[{\rm (i)}]
\item $|\{i\in S\mid 0<|x_i|<k/2\}|\ge4$ and $|\{i\in S\mid |x_i|=k/2\}|\le1$;
\item $|S|=4$, $|\{i\in S\mid |x_i|=k/2\}|=1$ and $S\notin H_n(4)$;
\item $|S|\in\{1,2,3\}$.
\end{enumerate}
\end{lemma}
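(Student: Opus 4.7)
The plan is to start from an arbitrary lift of $\beta+kQ$, reduce each coordinate into the window $[-k/2,k/2]$ using shifts by $ke_i\in kQ$, and then modify the result by further shifts in $kQ$ until one of the three conditions (i)--(iii) is satisfied. The only extra shifts I need come from the half-integer roots: for any $T\in H_n(4)$ and any signs $\delta_i\in\{\pm 1\}$, the vector $\tfrac{k}{2}\sum_{i\in T}\delta_ie_i$ lies in $kQ$ because $\tfrac12\sum_{i\in T}\delta_ie_i\in\Delta_{E_n}$.

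After the initial coordinatewise reduction, I have $\gamma=\sum_{i\in S}x_ie_i$ with $0<|x_i|\le k/2$ on its support $S$; set $H=\{i\in S\mid |x_i|=k/2\}$. Three configurations already match (i)--(iii) as they stand: $|S|\le 3$ gives (iii); $|S|\ge 4$ with $|H|\le 1$ and $|S\setminus H|\ge 4$ gives (i); $|S|=4$ with $|H|=1$ and $S\notin H_n(4)$ gives (ii). Two residual configurations must be cleaned up: (A) $|S|\ge 4$ with $|H|\ge 2$, and (B) $|S|=4$ with $|H|=1$ and $S\in H_n(4)$.

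For (A), I pick $a,b\in H$ and invoke Lemmas \ref{L:E8}--\ref{L:E7}: the codewords of $H_n(4)$ through $\{a,b\}$ have their complementary pairs partitioning $\Omega_n\setminus\{a,b\}$ when $n=8$ and covering all but one of its five elements when $n=7$. Since $|S\setminus\{a,b\}|\ge 2$, at least one such codeword $T=\{a,b,c,d\}$ satisfies $(T\setminus\{a,b\})\cap S\ne\emptyset$. Subtracting $\tfrac{k}{2}(\delta_ae_a+\delta_be_b+\delta_ce_c+\delta_de_d)\in kQ$, with each $\delta_i$ chosen to equal the sign of $x_i$ whenever $x_i\ne 0$, kills $x_a$ and $x_b$, keeps the new $|x'_c|,|x'_d|\le k/2$, and shifts $\sum_i|x_i|^2$ by exactly $-k(|x_c|+|x_d|)<0$. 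Since only finitely many representatives of $\beta+kQ$ have all coordinates in $[-k/2,k/2]$, iteration terminates with $|S|\le 3$ or $|H|\le 1$. For (B), I apply the analogous shift with $T=S$: taking $\delta_a$ equal to the sign of $x_a$ and $\delta_i$ equal to the sign of $x_i$ for $i\in S\setminus\{a\}$ produces $x'_a=0$ and $|x'_i|=k/2-|x_i|\in(0,k/2)$ for $i\in S\setminus\{a\}$, giving support of size $3$, hence case (iii).

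The main obstacle is the combinatorial step in (A), namely exhibiting a codeword of $H_n(4)$ through $\{a,b\}$ that meets $S\setminus\{a,b\}$; this is exactly what the structure of the extended Hamming code, as encoded by Lemmas \ref{L:E8}--\ref{L:E7}, provides. Everything else reduces to an elementary sign computation and a standard termination argument using $\sum_i|x_i|^2$ as monovariant.
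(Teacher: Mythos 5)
Your proof is correct and takes essentially the same approach as the paper: reduce coordinates into $[-k/2,k/2]$ via $ke_i\in kQ$, then use Lemmas \ref{L:E8} and \ref{L:E7} to find a codeword $T\in H_n(4)$ through two coordinates of absolute value $k/2$ with $|T\cap S|\ge3$, subtract $(k/2)\sum_{i\in T}\varepsilon_ie_i$, and finish the $|S|=4$, $S\in H_n(4)$ case by one more such shift. The only cosmetic difference is your termination monovariant ($\sum_i|x_i|^2$ versus the paper's count of coordinates of absolute value $k/2$); both are valid.
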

\begin{proof} By $\bigoplus_{i=1}^n k\Z e_i\subset kQ$, there exists $\gamma=\sum_{i\in S}x_ie_i\in \beta+kQ$ such that $0<|x_i|\le k/2$ for all $i\in S$.
Set $\varepsilon_i=x_i/|x_i|$ if $i\in S$ and $\varepsilon_i=1$ if $i\notin S$.
If $|S|\le3$, then (iii) holds.

Set $U=\{i\in S\mid |x_i|=k/2\}$ and assume that $|S|\ge4$ and $|U|\ge2$.
By Lemmas \ref{L:E8} and \ref{L:E7}, there exists $T\in H_n(4)$ such that $|U\cap T|\ge2$ and $|S\cap T|\ge3$.
Then the number of coefficients of $\gamma-(k/2)\sum_{i\in T}\varepsilon_ie_i$ with  absolute value $k/2$ is less than that of $\gamma$.
Repeating this procedure, we may assume that $\gamma$ satisfies $|U|\le1$.
We may also assume $|S|=4$ and $|U|=1$; otherwise, $\gamma$ satisfies (i) or (iii).
If $S\in H_n(4)$, then $\gamma-(k/2)\sum_{i\in S}{\varepsilon_i}e_i$ satisfies (iii).
If $S\notin H_n(4)$, then $\gamma$ satisfies (ii).
\end{proof}

Recall from \eqref{Eq:typeE6} and the definition of $H_7$ that $|T\cap\{5,6,7\}|\in2\Z$ for any $T\in H_7(4)$ and that for $T\in H_7(4)$ and $c_i\in\{\pm1/2\}$ $(i\in T)$, $\sum_{i\in T}c_ie_i\in E_6$ if and only if $\sum_{i\in T\cap\{5,6,7\}}c_i=0$.

\begin{lemma}\label{L:E6k/2} Set $Q=E_6$ and let $k\in\Z_{\ge2}$.
Let $\beta+kQ\in Q/kQ$.
Then there exist $S\subset\Omega_7$ and $\sum_{i\in S} x_ie_i \in \beta+kQ$ such that $0<|x_i| <k/2$ for all $i\in \{5,6,7\}\cap S$ and $0< |x_i|\le k/2$ for all $i\in\{1,2,3,4\}\cap S$.
\end{lemma}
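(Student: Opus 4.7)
The strategy is a two-stage reduction: first shrink the $\{5,6,7\}$-coordinates of a representative $\gamma \in \beta+kQ$ using carefully chosen elements of $kQ$, then clean up the $\{1,2,3,4\}$-coordinates using integer multiples of $ke_i$ (which leave the $\{5,6,7\}$-components fixed). The structural fact driving everything is that every $\gamma = \sum_i x_i e_i \in Q = E_6$ satisfies $x_5 + x_6 + x_7 = 0$: by \eqref{Eq:typeE6}, roots $\pm e_i$ with $i \in \Omega_4$ have vanishing $\{5,6,7\}$-projection, and every half-integer root satisfies $\sum_{i \in S \cap \{5,6,7\}} c_i = 0$ by definition. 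Evaluating the projection $\pi\colon Q \to \R\{e_5, e_6, e_7\}$ on half-integer roots with $|S \cap \{5,6,7\}| = 2$ yields the vectors $\pm \tfrac{1}{2}(e_i - e_j)$ with $i,j \in \{5,6,7\}$, whose $\Z$-span is the lattice $\tfrac{1}{2}A_2 := \{(v_5, v_6, v_7) \in (\tfrac{1}{2}\Z)^3 \mid v_5+v_6+v_7 = 0\}$. Hence $\pi(Q) = \tfrac{1}{2}A_2$ and $\pi(kQ) = \tfrac{k}{2}A_2$.

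Parameterize an element of $\tfrac{1}{2}A_2$ as $\tfrac{1}{2}(a+b)e_5 - \tfrac{1}{2}a\, e_6 - \tfrac{1}{2}b\, e_7$ with $a, b \in \Z$; the desired strict bound $|v_i| < k/2$ for $i \in \{5,6,7\}$ becomes $|a|, |b|, |a+b| < k$. I claim every coset of $k\Z^2$ in $\Z^2$ admits such a representative. Choose $a', b' \in \{0, 1, \dots, k-1\}$ with $a \equiv a', b \equiv b' \pmod k$; if $a' + b' < k$, use $(a', b')$; otherwise $a' \geq 1$ (since $a' = 0$ would force $a'+b' = b' < k$), and the shift $(a'-k, b')$ satisfies $|a'-k| = k - a' \leq k-1$, $|b'| \leq k-1$, and $(a'-k) + b' = a' + b' - k \in [0, k-2]$, all strictly less than $k$. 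The $\tfrac{k}{2}A_2$-generator $\tfrac{k}{2}(e_5 - e_6)$ lifts to $k \cdot \tfrac{1}{2}(e_1 + e_2 + e_5 - e_6) \in kQ$ (using $\{1,2,5,6\} \in H_7(4)$), and $\tfrac{k}{2}(e_5 - e_7)$ lifts to $k\cdot \tfrac{1}{2}(e_1 + e_3 + e_5 - e_7) \in kQ$ (using $\{1,3,5,7\} \in H_7(4)$).

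Starting from any $\gamma_0 \in \beta+kQ$, subtract a suitable $\Z$-combination $\delta \in kQ$ of these lifts to obtain $\gamma_1 = \gamma_0 - \delta \in \beta+kQ$ with $|x_i| < k/2$ strictly whenever $x_i \neq 0$ for $i \in \{5,6,7\}$. Although $\delta$ may perturb the $\{1,2,3,4\}$-coordinates, subsequent shifts by integer multiples of $ke_i \in kQ$ for $i \in \Omega_4$ restore $|x_i| \leq k/2$ for $i \in \Omega_4$ without affecting $\{5,6,7\}$-coordinates. Taking $S := \{i \in \Omega_7 \mid x_i \neq 0\}$ in the resulting representative yields the lemma.

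The principal subtlety I foresee is the strictness of the bound $|v_i| < k/2$ for $i \in \{5,6,7\}$: the closed hexagon $\{|v_i| \leq k/2\} \cap \{\sum v_i = 0\}$ has area only three times that of a fundamental domain of $\tfrac{k}{2}A_2$, so a priori it is not obvious every coset meets the \emph{open} hexagon. The explicit shift-and-swap argument in $\Z^2 / k\Z^2$ above dispatches this concern.
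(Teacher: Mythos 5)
Your proof is correct and follows essentially the same route as the paper's: both exploit $x_5+x_6+x_7=0$ to reduce the $\{5,6,7\}$-coordinates modulo the projection of $kQ$ onto $\R\{e_5,e_6,e_7\}$ (realized by elements $\tfrac{k}{2}(e_p+e_q\pm(e_r-e_s))\in kQ$ with $\{p,q,r,s\}\in H_7(4)$) into the open region $|x_i|<k/2$, and then normalize the $\Omega_4$-coordinates using $k\Z e_i\subset kQ$. Your explicit coset-representative argument for $k\Z^2$ in $\Z^2$ is a cleaner, one-shot packaging of the paper's two-stage greedy reduction, but the underlying mechanism is identical.
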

\begin{proof}Since  $k(e_s-e_r)\in kQ$ for any $\{r,s\} \subset \{5,6,7\}$ and $x_5+x_6+x_7=0$, there exist $S\subset\Omega_7$ and $\gamma=\sum_{i\in S} x_ie_i \in \beta+kQ $ such that $0< |x_i| < k$ for all $i\in \{5,6,7\}\cap S$.
Let $\{a,b,c\}=\{5,6,7\}$ so that $|x_a|\geq |x_b|\geq |x_c|$.

Assume that $k/2\le |x_a|<k$.
Since $x_a+x_b+x_c=0$, we have $x_ax_b<0$ and $|x_c|<k/2$.
Let $\varepsilon_i=x_i/|x_i|$ for $i\in S$. 
By Lemma \ref{L:E7}, there exists $\{p,q\}\subset \{1,2,3,4\}$ such that $\{p,q,a,b\} \in H_7(4)$. Set $\sum_{i=1}^7 y_i e_i = \gamma - \varepsilon_a (k/2) (e_p+e_q+e_a-e_b).$
Then $|y_a|$, $|y_b|$ and $|y_c|=|x_c|$ are all less than $k/2$. 
By $\bigoplus_{i=1}^4  k\Z e_i\subset kQ$, we obtain a desired vector.
\end{proof}

\begin{lemma}\label{L:upperE6} Set $Q=E_6$ and let $k\in\Z_{\ge2}$.
Let $\beta+kQ\in Q/kQ$ with $\beta\notin kQ$.
Then there exist $S\subset\Omega_7$ and $\sum_{i\in S}x_ie_i\in \beta+kQ$ satisfying one of the following:
\begin{enumerate}[{\rm (i)}]
\item $|\{i\in S\mid 0<|x_i|<k/2\}|\ge4$, $|\{i\in S\mid |x_i|=k/2\}|\le 1$ and $0<|x_i|\le k/2$ for all $i\in S$;
\item $|S|=4$, $|\{i\in S\mid |x_i|=k/2\}|=1$, $S\notin H_7(4)$ and $0<|x_i|\le k/2$ for all $i\in S$;
\item $|S|\in\{1,2,3\}$ and $0<|x_i|\le k/2$ for all $i\in S$;
\item $S=\{5,6,7\}$, $|\{i\in S\mid k/2\le|x_i|< k\}|=1$ and $|\{i\in S\mid 0<|x_i|<k/2\}|=2$. 
\end{enumerate}
\end{lemma}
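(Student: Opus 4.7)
The plan is to adapt the proof of Lemma \ref{L:upperE} to the $E_6$ setting, accounting for the extra parity constraint built into the definition of $\Delta_{E_6}$ in \eqref{Eq:typeE6}. Crucially, every element of $E_6$ lies in the hyperplane $x_5+x_6+x_7=0$, and this is the geometric origin of case (iv): when the support collapses onto $\{5,6,7\}$, one of the three coordinates is forced to have absolute value close to the sum of the other two.

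I would start by applying Lemma \ref{L:E6k/2} to obtain a representative $\gamma=\sum_{i\in S}x_ie_i\in\beta+kQ$ with $0<|x_i|<k/2$ on $S\cap\{5,6,7\}$ and $0<|x_i|\le k/2$ on $S\cap\{1,2,3,4\}$. Set $\varepsilon_i=x_i/|x_i|$ and $U=\{i\in S\mid |x_i|=k/2\}$, which is automatically contained in $\{1,2,3,4\}$. The key observation is that $\{1,2,3,4\}\in H_7(4)$ is disjoint from $\{5,6,7\}$, so the $E_6$-parity constraint is vacuous for this block, and $(k/2)\sum_{i\in\{1,2,3,4\}}\delta_ie_i\in kQ$ for any sign choice $\delta_i\in\{\pm1\}$. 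While $|U|\ge 2$, I would pick $a,b\in U$ and subtract such a vector with $\delta_a=\varepsilon_a$, $\delta_b=\varepsilon_b$ and $\delta_c=\varepsilon_c$ on the remaining indices $c\in\{1,2,3,4\}\setminus\{a,b\}$ (arbitrary if $c\notin S$), following the argument of Lemma \ref{L:upperE} verbatim. This strictly decreases the number of $\pm k/2$ coefficients and does not touch any $\{5,6,7\}$-coordinate, so the Lemma \ref{L:E6k/2} bounds persist. Iterating yields a representative with $|U|\le 1$.

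Once $|U|\le 1$, I would match against the conclusions: if $|S|\le 3$ output (iii); if $|S|\ge 5$, or $|S|=4$ with $U=\emptyset$, output (i); if $|S|=4$, $|U|=1$ and $S\notin H_7(4)$, output (ii). In the residual case $|S|=4$, $|U|=1$, $S\in H_7(4)$, I would attempt one final reduction by subtracting $(k/2)\sum_{i\in S}\delta_ie_i$ with $\delta_i=\varepsilon_i$ on the unique $i\in U$ and the two signs on $S\cap\{5,6,7\}$ chosen so that the parity condition $\sum_{i\in S\cap\{5,6,7\}}\delta_i=0$ holds. When the parity-respecting signs agree with $\varepsilon_i$ on $S\cap\{5,6,7\}$, this subtraction lowers the support to three coordinates of magnitude less than $k/2$, producing (iii). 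When a sign flip is forced, the flipped coordinate in $\{5,6,7\}$ ends up with absolute value in $(k/2,k)$; re-normalizing the $\{1,2,3,4\}$-coordinates modulo $k$ via $\bigoplus_{i=1}^4 k\Z e_i\subset kQ$ and invoking $x_5+x_6+x_7=0$ shows that the final support must be exactly $\{5,6,7\}$ with the magnitudes prescribed in (iv).

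The main obstacle is precisely this last step: verifying that after the forced sign flip the normalization process cannot escape the four listed shapes. Checking this requires a careful bookkeeping of the three $\{5,6,7\}$-coordinates under the constraint $x_5+x_6+x_7=0$ (which forces the large coordinate's magnitude to be at most $|x_b|+|x_c|<k$) together with the observation that any further candidate reduction in $kQ$ must again respect the $E_6$-parity condition, leaving no room to simultaneously shrink all three $\{5,6,7\}$-coordinates below $k/2$ in the flipped case.
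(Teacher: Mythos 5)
Your reduction loop for eliminating coordinates of absolute value $k/2$ has a genuine gap: it does not terminate in exactly the case that forces all the work in this lemma. You subtract $(k/2)\sum_{i\in\{1,2,3,4\}}\delta_ie_i$ and claim this "strictly decreases the number of $\pm k/2$ coefficients," but every index $c\in\{1,2,3,4\}\setminus S$ acquires the new coefficient $\mp k/2$ and therefore enters the new set $U$. The count drops from $|U|$ to $|\{1,2,3,4\}\setminus S|$, which is a strict decrease only when $|U|\ge3$, or when $|U|=2$ and $|S\cap\{1,2,3,4\}|\ge3$. In the remaining configuration $|U|=2$ with $U=S\cap\{1,2,3,4\}$ (say $U=\{1,2\}$, $3,4\notin S$), your subtraction merely swaps $U=\{1,2\}$ for $U'=\{3,4\}$ and the loop oscillates forever. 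Since this configuration can occur with $|S|\in\{4,5\}$ (two or three nonzero coordinates on $\{5,6,7\}$), it is not absorbed by your case (iii), and the resulting vector matches none of (i)--(iv) as it stands. The paper isolates precisely this case and resolves it by choosing $T\in H_7(4)$ with $U\subset T$ and $|T\cap\{5,6,7\}|=2$ (Lemma \ref{L:E7}) and running a sign analysis on $T\cap\{5,6,7\}$; notably, this is the branch (with $\{5,6,7\}\subset S$ and the two coordinates indexed by $T\cap\{5,6,7\}$ having the same sign) that actually produces alternative (iv).

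A secondary point: you locate the source of (iv) in the residual case $|S|=4$, $|U|=1$, $S\in H_7(4)$, but there no sign flip is ever forced. For $S\in H_7(4)$ one has $|S\cap\{5,6,7\}|\in\{0,2\}$, and when it equals $2$ the third coordinate vanishes, so $x_5+x_6+x_7=0$ forces the two surviving $\{5,6,7\}$-coordinates to have opposite signs; the parity condition in \eqref{Eq:typeE6} is then automatically satisfied by $\delta_i=\varepsilon_i$, and the subtraction always lands in (iii). So your "forced flip" analysis is vacuous where you apply it, while the configuration that genuinely requires it (the stuck $|U|=2$ case above) is the one your argument never reaches.
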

\begin{proof}
By Lemma \ref{L:E6k/2}, there exist $\gamma=\sum_{i\in S} x_ie_i \in \beta+kQ$ and $S\subset\Omega_7$ such that $0< |x_i|< k/2$ for all $i\in \{5,6,7\}\cap S$ and $0< |x_i|\le k/2$ for all $i\in\{1,2,3,4\}\cap S$.
For $i\in\Omega_7$, set $\varepsilon_i=x_i/|x_i|$ if $i\in S$ and $\varepsilon_i=1$ if $i\notin S$.

If $|S|\le 3$, then (iii) holds; we assume that $|S|\ge4$.
Set $U=\{i\in S\mid |x_i|=k/2\}$.
By the conditions on $\gamma$, we have $U\subset\{1,2,3,4\}$.
If $|U|=0$, then (i) holds; we also assume that $|U|\ge1$.

If $|U|\ge3$, or $|U|=2$ with $U\subsetneq S\cap\{1,2,3,4\}$, then replacing $\gamma$ by $\gamma-(k/2)\sum_{i=1}^4\varepsilon_ie_i$, we may assume $|U|\le 1$.

If $|U|=2$ and $U=S\cap\{1,2,3,4\}$, then there exists $T\in H_7(4)$ such that $U\subset T$ and $|T\cap\{5,6,7\}|=2$, and there are two cases: (a) $|S|=4$ and (b) $|S|=5$.

(a) $|S\cap\{5,6,7\}|=2$, $ |S\cap T\cap\{5,6,7\}|\in\{1,2\}$ and $\prod_{i\in T\cap\{5,6,7\}}x_i\le0$.
If $ |S\cap T\cap\{5,6,7\}|=2$, then $\prod_{i\in T\cap\{5,6,7\}}x_i<0$, and $\gamma-(k/2)\sum_{i\in T}\varepsilon_ie_i$ satisfies (iii).
If $ S\cap T\cap\{5,6,7\}=\{a\}$, then $T=U\cup\{a,b\}$ for some $b\in \{5,6,7\}\setminus S$.
Then $\gamma-(k/2)(\sum_{i\in U}\varepsilon_ie_i+\varepsilon_ae_a-\varepsilon_ae_b)$ satisfies (iii).

(b) $\{5,6,7\}\subset S$ and $|S\cap T\cap\{5,6,7\}|=2$.
Let $\{a,b\}=T\cap\{5,6,7\}$.
If $x_ax_b<0$, then $\gamma-(k/2)\sum_{i\in T}\varepsilon_ie_i$ satisfies (iii); otherwise, $\gamma-(k/2)(\sum_{i\in U}\varepsilon_ie_i+\varepsilon_ae_a-\varepsilon_ae_b)$ satisfies (iv). 

Assume that $|U|=1$.
If $|S|\ge5$, then (i) holds.
If $|S|=4$ and $S\in H_7(4)$, then $|S\cap\{1,2,3,4\}|\in2\Z$ and $|S\cap\{5,6,7\}|\in2\Z$.
Then $S=\{1,2,3,4\}$ or $\prod_{i\in S\cap\{5,6,7\}}x_i<0$ holds, and $\gamma-(k/2)\sum_{i\in S}\varepsilon_ie_i$ satisfies (iii).
If $|S|=4$ and $S\notin H_7(4)$, then (ii) holds.
\end{proof}

\begin{proof}[Proof of Theorem \ref{Thm:key} for type $E_n$] We assume that $\rho(M^{0,\beta+kQ})=1-({1}/{k})$.
Since $\rho(M^{0,kQ})=0$, we have $\beta\notin kQ$.
If $n=7,8$, then  we may assume that $\beta$ satisfies (i), (ii) or (iii) in Lemmas \ref{L:upperE}; if $n=6$, then we may assume that $\beta$ satisfies (i), (ii), (iii) or (iv) in Lemma \ref{L:upperE6}.
Note that the conditions (i)--(iii) are the same in Lemmas \ref{L:upperE} and \ref{L:upperE6}.
Set $m=8,7,7$ if $n=8,7,6$, respectively.

First, we assume (i); $\beta=\sum_{i\in S}x_ie_i$, $S\subset\Omega_m$ with $0<|x_i|\le k/2$ for $i\in S$.
Note that $|x_i|\in\Z/2$ for all $i\in S$.
Set $V=\{i\in S\mid 0<|x_i|<k/2\}$.
By the condition (i), $|V|\ge4$ and $|S\setminus V|\le 1$.
By Propositions \ref{P:lengthE} (1) and \ref{P:lowest} (2), we obtain
\begin{align*}
&\rho(M^{0,\beta+kQ})-(1-\frac{1}{k})\ge\ell_{E_n}(\beta)-\dfrac{|\beta|^2}{2k}-(1-\frac{1}{k})\\
\ge&\frac{1}{2}\sum_{i\in S}|x_i|-\frac{1}{k}\sum_{i\in S}|x_i|^2-\frac{k-1}{k}\\
=&\frac{1}{2k}\left(2\sum_{i\in V}(\frac{k-1}{2}-|x_i|)(|x_i|-\frac12)+\frac12(k-1)(|V|-4)\right)\ge0.
\end{align*}
By the assumption $\rho(M^{0,\beta+kQ})=1-({1}/{k})$, all the equalities hold.
Hence $|V|=4$ and $|x_i|\in\{1/2,(k-1)/2\}$ for all $i\in V$.
Then $|S|\in\{4,5\}$.
In addition, $\ell_{E_n}(\beta)=(1/2)\sum_{i\in S}|x_i|$.
By Lemma \ref{P:lengthE} (5), $|x_i|$ is constant for all $i\in S$ and $S\in H_m(4)$.
Then $S=V$ and $\beta+kQ$ contains a root in $\{\sum_{i\in S}c_ie_i\mid c_i\in\{\pm1/2\}\}$.

Next, we assume (ii) or $|S|=3$ in (iii); $\beta=\sum_{i\in S}x_ie_i$, $S\subset\Omega_m$ with $0<|x_i|\le k/2$ for $i\in S$, $|S|\in\{3,4\}$ and $S\notin H_m(4)$.
In this case, $|x_i|\in\Z$ for all $i\in S$.
By Propositions \ref{P:lengthE} (2) and \ref{P:lowest} (2), 
we have 
\begin{align*}
&\rho(M^{0,\beta+kQ})-(1-\frac{1}{k})
\ge\dfrac{2}{3}\sum_{i\in S}|x_i|-\frac{1}{k}\sum_{i\in S}|x_i|^2-\frac{k-1}{k}\\
=&\frac{1}{k}\left(\sum_{i\in S}|x_i|(\frac{k}{2}-|x_i|)+\frac{k}{6}(\sum_{i\in S}|x_i|-6)+1\right). 
\end{align*}
If $\sum_{i\in S}|x_i|\ge6$, then  $\rho(M^{0,\beta+kQ})>1-({1}/{k})$, 
which is a contradiction.
So, we suppose that $\sum_{i\in S}|x_i|\le5$. Then $(|x_i|)_{i\in S}=(1,1,1)$, $(2,1,1)$, $(2,2,1)$, $(3,1,1)$, $(1,1,1,1)$ or $(2,1,1,1)$ up to permutation on coordinates.
By Proposition \ref{P:lengthE} (2), $\ell_{E_n}(\beta)\ge 2,3,4$ if $\sum_{i\in S}|x_i|=3,4,5$, respectively.
Let us consider the two cases (a) $k\ge3$ and (b) $k=2$.

(a) $k\ge3$. Then for each case, it is easy to see that
$$\rho(M^{0,\beta+kQ})\ge \ell_{E_n}(\beta)-\frac{1}{k}\sum_{i\in S}|x_i|^2>1-\frac{1}{k},$$
which is a contradiction.

(b) $k=2$. Then $(|x_i|)_{i\in S}=(1,1,1)$ or $(1,1,1,1)$; the latter case contradicts the condition $|\{i\in S\mid |x_i|=1\}|=1$ in (ii).
For the former case, if $\ell_{E_n}(\beta)>2$, then we obtain a contradiction using the same calculation as in (a).
Hence $\ell_{E_n}(\beta)=2$, and $\ell_{E_n}(\beta)=(2/3)\sum_{i\in S}|x_i|$.
By Proposition \ref{P:lengthE} (6), there exists $T\in H_m(4)$ such that $S\subset T$.
Let $\{q\}=T\setminus S$.
Set $\varepsilon_i=x_i/|x_i|$ if $i\in S\cap T=S$ and we choose $\varepsilon_q\in\{\pm1\}$ so that $(1/2)\sum_{i\in T}\varepsilon_i x_i\in Q$.
Then $\beta-\sum_{i\in T}\varepsilon_ix_i=- \varepsilon_qe_q$ is a root contained in $\beta+2Q$.

We now assume that $|S|\in\{1,2\}$ in (iii); $\beta=\sum_{i\in S}x_ie_i$, $S\subset\Omega_m$ with $0<|x_i|\le k/2$ for $i\in S$, and $|S|\in\{1,2\}$.
By Propositions \ref{P:lengthE} (3) and \ref{P:lowest} (2), 
\begin{align*}
&\rho(M^{0,\beta+kQ})-(1-\frac{1}{k})
\ge\sum_{i\in S}|x_i|-\frac{1}{k}\sum_{i\in S}|x_i|^2-\frac{k-1}{k}\\
=&\frac{1}{k}\left(\sum_{i\in S}(|x_i|-1)(k-1-|x_i|)+(|S|-1)(k-1)\right)\ge0.
\end{align*}
By the assumption $\rho(M^{0,\beta+kQ})=1-({1}/{k})$, all the equalities hold.
Hence $S=\{a\}$ and $|x_a|=1$.
Thus $\beta$ is a root.

Finally we assume (iv) for $E_6$; 
$\beta=\sum_{i\in S}x_ie_i$, $S=\{5,6,7\}$.
Note that $|x_i|\in\Z$ for $i\in\{5,6,7\}$.
We may assume that $S=\{a,b,c\}$ and $|x_a|\ge|x_b|\ge|x_c|\ge1$.
By $x_a+x_b+x_c=0$ and the condition (iv), we have $k>|x_a|=|x_b|+|x_c|$, and $k/2> |x_b|\ge|x_c|$.
By Propositions \ref{P:lengthE} (4) and \ref{P:lowest} (2), we obtain
\begin{align*}
&\rho(M^{0,\beta+kQ})-(1-\frac{1}{k})
\ge\sum_{i\in S}|x_i|-\frac{1}{k}\sum_{i\in S}|x_i|^2-\frac{k-1}{k}\\
=&\frac{1}{k}\left(2|x_b|(k-|x_b|-|x_c|)+2|x_c|(\dfrac{k}{2}-|x_c|)+k(|x_c|-1)+1\right)>0,
\end{align*}
which contradicts that $\rho(M^{0,\beta+kQ})=1-({1}/{k})$.
\end{proof}

\subsection{Type $F_4$}
Let $\Delta_{F_4}$ be the root system of type $F_4$ given as in \eqref{Eq:typeF} and let $Q=F_4$ be the root lattice of type $F_4$ generated by $\Delta_{F_4}$.
Then the sublattice $Q_L$ generated by long roots is isometric to a root lattice of type $D_4$.

\begin{lemma}\label{L:keyF} Let $k\in\Z_{\ge2}$ and set $Q=F_4$.
Let $\beta+kQ_L\in Q/kQ_L$ with $\beta\notin kQ_L$.
Then there exist $S\subset\Omega_4$ and $\sum_{i\in S}x_ie_i\in \beta+kQ_L$ satisfying one of the following :
\begin{enumerate}[{\rm (i)}]
\item $|S|\ge2$ and $0<|x_i|<k$ and $|x_i|\in\Z$ for all $i\in S$;
\item $S=\{a\}$ and $0<|x_a|\le k$ and $|x_a|\in\Z$;
\item $S=\Omega_4$ and $0< |x_i|< k$ and $|x_i|\in1/2+\Z$ for all $i\in S$.
\end{enumerate}
\end{lemma}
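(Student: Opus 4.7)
The plan is to split according to the ``parity type'' of the coordinates of $\beta$, since $Q=F_4$ decomposes as $\Z^4+\Z w$ with $w=\frac12(e_1+e_2+e_3+e_4)$, while $Q_L=D_4=\{\sum a_ie_i\mid a_i\in\Z,\ \sum a_i\in 2\Z\}\subset \Z^4$. In particular, every element of $Q$ has either all integer coordinates or all half-integer coordinates (i.e.\ in $\frac12+\Z$), and this parity is preserved by translation by $kQ_L\subset\Z^4$. The translations I would use are $k(e_i\pm e_j)\in kQ_L$ for $i\neq j$ and $2ke_i=k(e_i+e_j)+k(e_i-e_j)\in kQ_L$. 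I would verify once and for all that options (i), (ii) correspond to the integer-coordinate type and (iii) to the half-integer one.

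\textbf{Case A} ($\beta\in\Z^4$). I would write $\beta=\sum_{i\in S}x_ie_i$ with $x_i\in\Z\setminus\{0\}$ and $\varepsilon_i=x_i/|x_i|$, and run the same reduction used in the proof of Lemma \ref{L:upperAD}(2): while $|S|\ge 2$ and some $|x_a|\ge k$, pick $b\in S\setminus\{a\}$ and replace $\beta$ by $\beta-k(\varepsilon_ae_a+\varepsilon_be_b)\in\beta+kQ_L$. The identity $(|x_a|-k)+\bigl||x_b|-k\bigr|\le |x_a|+|x_b|$ with strict inequality when $|x_a|\ge k$ guarantees that $\sum_{i\in S}|x_i|$ strictly decreases, so the loop terminates. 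At termination either all $|x_i|<k$ with $|S|\ge2$, giving (i), or $|S|=1$. In the latter case I would reduce $x_a$ modulo $2k$ using $2k\varepsilon_ae_a\in kQ_L$ to force $|x_a|\le k$; since $\beta\notin kQ_L$ (and $2k\Z e_a\subset kQ_L$), the resulting $|x_a|$ is at least $1$, yielding (ii).

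\textbf{Case B} ($\beta\notin\Z^4$). Then every coordinate is a nonzero half-integer, so $S=\Omega_4$ automatically, and also $\beta\notin kQ_L$ is automatic since $kQ_L\subset\Z^4$. For each $i$ I would subtract an appropriate multiple of $2k\varepsilon_ie_i\in kQ_L$ to pull $|x_i|$ into $[\tfrac12,k-\tfrac12]\subset(0,k)$. This produces a representative satisfying (iii).

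I do not expect a genuine obstacle here: the Case A argument is modeled directly on Lemma \ref{L:upperAD}(2) for $D_n$, and the extra work in $F_4$ is confined to recognizing that $Q\setminus\Z^4$ (the half-integer stratum) cannot be touched by $kQ_L$ and is handled by coordinatewise reduction. The only point that requires a little care is to check, in Case A, that the termination of the first loop does not leave $|S|=0$ (which would force $\beta\in kQ_L$), and that passing from $|S|\ge2$ to $|S|=1$ during the loop is unproblematic because the subsequent $2ke_a$-reduction preserves $|S|=1$.
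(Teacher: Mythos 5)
Your proof is correct and follows essentially the same route as the paper: split by whether $\beta$ has integer or half-integer coordinates, handle the integer stratum by the same reduction used for $D_n$ in Lemma \ref{L:upperAD}(2) (the paper routes this through Lemma \ref{L:keyB} for $B_4$), and handle the half-integer stratum by coordinatewise reduction using $kQ_L$-translations built from $k(e_i\pm e_j)$. Your extra care about the loop not terminating at $|S|=0$ and about $x_a\not\equiv 0\pmod{2k}$ in case (ii) is exactly the role played by the hypothesis $\beta\notin kQ_L$.
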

\begin{proof} If $\beta\in \bigoplus_{i=1}^4 \Z e_i$, then we can find a vector in $\beta+kQ_L$ satisfying (i) or (ii) by exactly the same argument as in Lemma \ref{L:keyB} for type $B_4$.
If $\beta\in Q\setminus \bigoplus_{i=1}^4 \Z e_i$, then $|x_i|\in1/2+\Z$ for all $ i\in\Omega_4$.
Since $k(e_i\pm e_j)\in kQ_L$ for $\{i,j\}\subset\Omega_4$, $\beta+kQ_L$ contains a vector satisfying (iii).
\end{proof}

\begin{proof}[Proof of Theorem \ref{Thm:key} for type $F_4$] We assume that $\rho(M^{0,\beta+kQ_L})=1-({1}/{tk})$ for $t\in\{1,2\}$.
Since $\rho(M^{0,kQ_L})=0$, we have $\beta\notin kQ_L$.
If $\beta+kQ_L$ contains a vector satisfying (i) or (ii) in Lemma \ref{L:keyF}, then we obtain the results by exactly the same argument for $B_4$.

The remaining case is (iii) in Lemma \ref{L:keyF}; we may assume that $\beta=\sum_{i\in\Omega_4}x_ie_i$ satisfies $0<|x_i|<k$ and $|x_i|\in 1/2+\Z$ for all $i\in\Omega_4$.
By Propositions \ref{P:length} (1) and \ref{P:lowest} (2), we obtain
\begin{align*}
&\rho(M^{0,\beta+kQ_L})-(1-\frac{1}{tk})\ge\ell_{F_4}(\beta)-\dfrac{|\beta|^2}{2k}-(1-\frac{1}{tk})\\
\ge&\frac{1}{2}\sum_{i\in\Omega_4}|x_i|-\frac{1}{2k}\sum_{i\in\Omega_4}|x_i|^2-\frac{tk-1}{tk}\\
=&\frac{1}{2k}\left(\sum_{i\in\Omega_4}(k-\frac{1}{2}-|x_i|)(|x_i|-\frac12)+\frac{2}{t}-1\right)\ge0.
\end{align*}
By the assumption $\rho(M^{0,\beta+kQ_L})=1-({1}/{tk})$, all the equalities hold.
Hence $t=2$, $|x_i|\in\{1/2,k-1/2\}$ for all $i\in\Omega_4$ and $\ell_{F_4}(\beta)=(1/2)\sum_{i\in\Omega_4}|x_i|$.
By Lemma \ref{L:lformula}, $|x_i|$ is constant for $i\in\Omega_4$.
Then $\beta+kQ_L$ contains a short root in $\{\sum_{i\in\Omega_4}c_ie_i\mid c_i\in\{\pm1/2\}\}$.
\end{proof}

\subsection{Type $G_2$}
Let $\Delta_{G_2}$ be the root system of type $G_2$ given as in \eqref{Eq:typeG} and let $Q=G_2$ be the root lattice generated by $\Delta_{G_2}$.
The sublattice $Q_L$ generated by long roots is isometric to a root lattice of type $A_2$.

\begin{lemma}\label{L:keyG} Let $k\in\Z_{\ge2}$ and set $Q=G_2$.
Let $\beta+kQ_L\in Q/kQ_L$ with $\beta\notin kQ_L$.
Then there exists $\sum_{i\in\Omega_3}x_ie_i\in \beta+kQ_L$ satisfying one of the following:
\begin{enumerate}[{\rm (i)}]
\item $|x_i|<k$ and $|x_i|\in\Z$ for all $i\in\Omega_3$;
\item $|x_i|\in(1/3)\Z\setminus\Z$ and $0<|x_i|< k$ for all $i\in\Omega_3$;
\end{enumerate}
\end{lemma}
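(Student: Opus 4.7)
The plan is to mimic the coset-reduction strategy of Lemmas \ref{L:upperAD} and \ref{L:keyB}, adapted to the arithmetic of the $G_2$ root lattice. The key is that $Q/Q_L$ has a very simple structure: the long-root sublattice is the $A_2$-lattice $Q_L=\{\sum_iy_ie_i\mid y_i\in\Z,\ y_1+y_2+y_3=0\}$, and setting $\alpha=\tfrac13(2e_1-e_2-e_3)\in\Delta_{G_2}$, one has $3\alpha\in Q_L$ and $\alpha\notin Q_L$, so $Q/Q_L\cong\Z/3\Z$ with representatives $0,\pm\alpha$. Every $\gamma\in Q$ thus satisfies $\sum_i\gamma_i=0$, and either all coordinates of $\gamma$ lie in $\Z$ (iff $\gamma\in Q_L$) or they all lie in $\tfrac13\Z\setminus\Z$ with a common fractional part (iff $\gamma\notin Q_L$). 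Since $kQ_L\subseteq Q_L$, this dichotomy propagates to all representatives of $\beta+kQ_L$ and is exactly the split between cases (i) and (ii) in the statement.

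The core of the proof is a single reduction valid in both cases. Among all representatives of $\beta+kQ_L$ I would choose $\gamma=\sum_iy_ie_i$ minimizing $\sum_i|y_i|$ (a minimum is attained since the values lie in $\tfrac13\Z_{\ge 0}$) and then claim $|y_i|<k$ for every $i$. Suppose not; after permuting coordinates one may assume $y_1\ge k$. Since $y_1>0$ and $\sum_iy_i=0$, there exists $j\in\{2,3\}$ with $y_j<0$. Replacing $\gamma$ by $\gamma':=\gamma-k(e_1-e_j)\in\beta+kQ_L$, one has $|y_1'|-|y_1|=-k$, while a short case analysis on the sign of $y_j+k$ gives $|y_j'|-|y_j|=2y_j+k$ when $y_j\ge -k$ and $|y_j'|-|y_j|=-k$ when $y_j<-k$; in both sub-cases the total change $\sum_i|y_i'|-\sum_i|y_i|$ is strictly negative, contradicting minimality. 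Because the translation vector $k(e_1-e_j)$ has integer coordinates, the fractional parts of the $y_i$ are preserved, so the reduction stays within whichever of the two cases applies.

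With $|y_i|<k$ established, case (i) is complete. In case (ii) the condition $y_i\in\tfrac13\Z\setminus\Z$ automatically excludes $y_i=0$, giving $0<|y_i|<k$ for all $i$, as required. I expect no serious obstacle: the only subtlety is to obtain the strict inequality $|y_i|<k$ rather than $|y_i|\le k$, and this is exactly what the minimality argument, together with the hypothesis $\beta\notin kQ_L$, delivers.
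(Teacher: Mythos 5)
Your proposal is correct and follows essentially the same route as the paper: both reduce a representative of $\beta+kQ_L$ by subtracting long-root multiples $k(e_i-e_j)\in kQ_L$ so as to shrink all coordinates below $k$, using that such integer translations preserve the dichotomy (all coordinates in $\Z$ versus all in $\tfrac13\Z\setminus\Z$) separating cases (i) and (ii); the paper merely splits into $\beta\in Q_L$ (quoting the $A_2$ case of Lemma \ref{L:upperAD}) and $\beta\notin Q_L$ (a direct descent), whereas you treat both uniformly via a representative minimizing $\sum_i|y_i|$. The only nit is the phrase ``after permuting coordinates one may assume $y_1\ge k$'': if the offending coordinate satisfies $y_1\le -k$, a permutation does not change its sign, but the symmetric move $\gamma-k(e_j-e_1)$ with $y_j>0$ yields the same strict decrease, so nothing essential is lost.
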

\begin{proof} If $\beta\in Q_L$, then Lemma \ref{L:upperAD} (1) for $A_2$ shows that $\beta+kQ_L$ contains a vector satisfying (i).

Assume that $\beta=\sum_{i\in\Omega_3}x_ie_i\in Q\setminus Q_L$ for all $i\in\Omega_3$.
Then $|x_i|\in(1/3)\Z\setminus\Z$.
We may assume that $\Omega_3=\{a,b,c\}$ and $|x_a|\ge|x_b|\ge|x_c|>0$.
In addition, we assume $|x_a|\ge k$.
It follows from $x_a=-x_b-x_c$ that $x_ax_b<0$ and $x_ax_c<0$.
Then the sum of absolute values of entries of $\beta-(x_a/|x_a|)k(e_a-e_b)$ is less than that of $\beta$.
Repeating this procedure, we obtain a vector satisfying (ii).
\end{proof}

\begin{proof}[Proof of Theorem \ref{Thm:key} for type $G_2$] We assume that $\rho(M^{0,\beta+kQ_L})=1-({1}/{tk})$ for $t\in\{1,3\}$.
Since $\rho(M^{0,kQ_L})=0$, we have $\beta\notin kQ_L$.
If $\beta+kQ_L$ contains a vector satisfying (i) in Lemma \ref{L:keyG}, then $\langle\beta,\beta\rangle\in 2\Z$, and $t=1$ by Proposition \ref{P:lowest} (1).
Hence we obtain the results by exactly the same argument for $A_2$.

The remaining case is (ii) in Lemma \ref{L:keyG}; we may assume that $\beta=\sum_{i\in\Omega_3}x_ie_i$ satisfies $0< |x_i|<k$ and $|x_i|\in (1/3)\Z\setminus\Z$ for all $i\in\Omega_3$.
We may also assume that $\Omega_3=\{a,b,c\}$ and $|x_a|\ge|x_b|\ge|x_c|>0$.
Then $x_a+x_b+x_c=0$, and $|x_a|=|x_b|+|x_c|$.
Note that $k>|x_a|=|x_b|+|x_c|\ge2/3$.
Since $\sum_{i\in\Omega_3}|x_i|=2|x_a|\notin\Z$, we have $\ell_{G_2}(\beta)\ge(1/2)\sum_{i\in\Omega_3}|x_i|+(1/3)$ by Proposition \ref{P:length} (2).
By Proposition \ref{P:lowest} (2), we obtain
\begin{align*}
&\rho(M^{0,\beta+kQ_L})-(1-\frac{1}{tk})
\ge\frac{1}{2}\sum_{i\in\Omega_3}|x_i|+\frac{1}{3}-\frac{1}{2k}\sum_{i\in\Omega_3}|x_i|^2-\frac{tk-1}{tk}\\
=&\frac{1}{k}\left(\left(k-\frac{1}{3}-(|x_b|+|x_c|)\right)(|x_b|+|x_c|-\frac{2}{3})+(|x_b|-\frac13)(|x_c|-\frac13)+\frac{1}{t}-\frac13\right)\ge0.
\end{align*}
By the assumption $\rho(M^{0,\beta+kQ_L})=1-({1}/{tk})$, all the equalities hold.
Then $t=3$ and $|x_b|=|x_c|=1/3$.
Hence $|x_a|=2/3$, and $\beta$ is a short root.
\end{proof}

\begin{remark} Let $k\in\Z_{\ge2}$ and let $\g$ be a simple Lie algebra.
By Proposition \ref{simplecurrent} and the proof of Theorem \ref{Thm:key}, if $k$ is not $2$ or $\g$ is not of type $E_8$, then
$$\min\{\rho(M)\mid M\in\irr(K(\g,k))_{sc},\ M\not\cong K(\g,k)\}=1-\frac1k.$$ 
\end{remark}

\section{Automorphism groups of parafermion vertex operator algebras}\label{S:5}
Let $\g$ be a simple Lie algebra and let $k\in\Z_{\ge2}$.
We obtained in \eqref{Eq:phi}  the group homomorphism
\begin{equation*}
\varphi:\Aut(\Delta_\g)\to\Aut(K(\g,k)).\label{Eq:varphi}
\end{equation*}

When $k=2$ and $\g$ is simply laced, 
$K(\g,2)$ has been determined in \cite{LSY} (see also \cite{JLY}):
\begin{theorem}{\rm \cite{LSY}}\label{T:LSY} Let $\g$ be a simple Lie algebra of type $X_n\in\{A_n,D_n,E_n\}$.
Then $$\Aut(K(\g,2))\cong \begin{cases}\Aut(\Delta_\g)/\{\pm1\}& (X_n\in\{A_n,D_n,E_6,E_7\}),\\ Sp_8(2)& (X_n=E_8).\end{cases}$$
\end{theorem}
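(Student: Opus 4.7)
The plan is to use the isomorphism $K(\g,2)\cong M_Q$ from \cite{JLY}, where $M_Q$ is the commutant inside the $+$-orbifold $V_{\sqrt{2}Q}^+$ of an explicit Virasoro subVOA $T$. One recovers $\Aut(K(\g,2))$ by first computing $\Aut(V_{\sqrt{2}Q}^+)$, then taking the subgroup stabilizing $T$ (equivalently $M_Q$), and finally restricting to $M_Q$. The first step rests on the theory of automorphisms of $V_L^+$ for even lattices $L$: generically $\Aut(V_L^+)$ is essentially $O(L)/\langle-1\rangle$, possibly extended by coset automorphisms detected on the weight-two Griess algebra, while in the exceptional case $L=\sqrt{2}E_8$ there is a much larger symmetry group of code-theoretic origin.

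For $\g$ of type $A_n,D_n,E_6,E_7$, the argument proceeds by showing that the stabilizer of $T$ in $\Aut(V_{\sqrt{2}Q}^+)$ reduces to the image of $O(Q)$; since $O(Q)$ is generated by $\Aut(\Delta_\g)$ together with $-1$, and since $-1$ acts trivially on both $V_L^+$ and on $M_Q$, the induced group on $M_Q$ is $\Aut(\Delta_\g)/\{\pm1\}$. Injectivity of the restriction follows from an analogue of Lemma \ref{L:inj}, and ruling out extra coset automorphisms amounts to checking that no weight-two element of $V_{\sqrt{2}Q}^+$ outside the span of the Virasoro image gives rise to a Griess-algebra automorphism fixing $T$, which one can verify case-by-case.

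The substantive case is $\g=E_8$. Here $E_8/2E_8\cong\mathbb{F}_2^8$ carries the non-degenerate quadratic form $q(x+2E_8)=\tfrac{1}{2}\langle x,x\rangle\bmod 2$ with its symplectic polarization. By the remark after Proposition \ref{Cor:fusion}, the extra simple current $L_{\hat\g}(2,\Lambda_7)$ of $L_{\hat\g}(2,0)$ produces simple current $K(E_8,2)$-modules not of the form $M^{0,\beta+2E_8}$, and, together with the conformal-weight pairing modulo $\Z$ used in Theorem \ref{Thm:key}, this endows $\irr(K(E_8,2))_{sc}$ with a natural non-degenerate symplectic structure. The action of $\Aut(K(E_8,2))$ preserves it, yielding an injective homomorphism into $Sp_8(2)$; surjectivity would be established by producing explicit extra automorphisms, for instance via triality-type symmetries of $V_{\sqrt{2}E_8}^+$ rooted in the $[8,4]$ Hamming code structure of $E_8/2E_8$, so that together with the image of $\Aut(\Delta_{E_8})$ they generate all of $Sp_8(2)$.

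The principal obstacle is the $E_8$ surjectivity: constructing enough additional automorphisms to fill out $Sp_8(2)$, and matching the code-theoretic combinatorics of $\sqrt{2}E_8$ with the symplectic geometry on the simple current set. A secondary difficulty is the reverse lifting direction: given $\sigma\in\Aut(M_Q)$, one needs to extend $(\id,\sigma)$ on $T\otimes M_Q$ to an automorphism of $V_{\sqrt{2}Q}^+$, via an argument of the type invoked in Lemma \ref{L:inj} using \cite[Theorem 3.3]{S}. Once both directions are aligned, the description of $\Aut(K(\g,2))$ in each type follows.
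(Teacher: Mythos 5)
First, a point of framing: the paper does not prove this statement at all. Theorem \ref{T:LSY} is imported wholesale from \cite{LSY} (via the identification $K(\g,2)\cong M_Q$ inside $V_{\sqrt2Q}^+$ from \cite{JLY}), so there is no internal proof to compare against; the only fair benchmark is the method of the cited source. Your outline is directionally consistent with that method --- pass to the commutant $M_Q$ in $V_{\sqrt{2}Q}^+$, control $\Aut(V_{\sqrt{2}Q}^+)$, and restrict the stabilizer of the Virasoro subVOA --- but as written it is a plan with several acknowledged holes rather than a proof, and the holes are exactly where the content of \cite{LSY} lives.

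Concretely: (a) You never determine $\Aut(V_{\sqrt{2}Q}^+)$ or its $T$-stabilizer; the assertion that for $A_n,D_n,E_6,E_7$ this "reduces to the image of $O(Q)$" is the theorem in those cases, and the promised case-by-case exclusion of extra Griess-algebra automorphisms is not carried out. (b) The "reverse lifting" step --- extending $(\id,\sigma)$ on $T\otimes M_Q$ to $V_{\sqrt2Q}^+$ --- is flagged but not done; without it you only get an upper bound in one direction. (c) The $E_8$ case is where the proposal genuinely fails. Surjectivity onto $Sp_8(2)$ is left entirely open ("would be established by producing explicit extra automorphisms"), and even the injection into $Sp_8(2)$ is shaky: the conformal weights of the $M^{0,\beta+2Q}$ give the quadratic form $q(\beta)=\tfrac12|\beta|^2\bmod 2$ on $E_8/2E_8\cong\mathbb{F}_2^8$, and an argument that only uses preservation of conformal weights would land you in the orthogonal group $O_8^+(2)$ of $q$, not in all of $Sp_8(2)$; moreover the extra simple current coming from $L_{\hat\g}(2,\Lambda_7)$ enlarges $\irr(K(E_8,2))_{sc}$ beyond $\mathbb{F}_2^8$, so identifying the relevant $8$-dimensional symplectic space and the faithfulness of the action both need justification. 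Since the paper treats this theorem as a black box, the honest conclusion is that your proposal reconstructs the shape of the external argument but does not close any of its essential steps.
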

This theorem shows that if $\g$ is simply laced and $k=2$, then $\ker\varphi=\{\pm1\}$ and that $\Aut(K(\g,2))=\im\varphi$ if and only if the type of $\g$ is not $E_8$.

\begin{remark}
\begin{enumerate}
\item When the type of $\g$ is $A_1$ and $k\in\Z_{\ge3}$, $K(\g,k)$ is generated by the conformal vector and certain vector of conformal weight $3$ (\cite[Theorem 4.1]{DLWY}), which shows $\Aut(K(\g,k))\cong\Z/2\Z\cong\Aut(\Delta_\g)$.  
\item When the type of $\g$ is $A_2$ and $k\in\Z_{\ge3}$,  
it was shown in \cite[Theorem 3.6]{W} that $\Aut(K(\g,k))\cong(\Z/2\Z)\times\Sym_3\cong\Aut(\Delta_\g)$ by analysis on $K(\g,k)_2$ and $K(\g,k)_3$.  
\end{enumerate}
\end{remark}
\begin{remark}\label{R:level1} 
For a simple Lie algebra $\g$ of type $X_n$ and $t\in\Z_{>0}$, we use $K(X_n,t)$ to denote $K(\g,t)$.
It is known that $K(X_n,1)=\C$ if $X_n$ is simply laced, $K(B_n,1)\cong K(A_1,2)$ $(n\ge2)$, $K(C_n,1)\cong K(A_{n-1},2)$ ($n\ge2$), $K(F_4,1)\cong K(A_2,2)$ and $K(G_2,1)\cong K(A_1,3)$ (see \cite[Section 5]{DW2}).
Hence $\Aut(K(X_n,1))=1$ if $X_n$ is simply laced or $X_n=B_n$, $\Aut(K(C_n,1))\cong \Sym_{n}$ $(n\ge2)$, $\Aut(K(F_4,1))\cong \Sym_3$ and $\Aut(K(G_2,1))\cong \Z/2\Z$.
In particular, $\Aut(K(\g,1))=\im\varphi$ for any $\g$.
\end{remark}
\begin{lemma}\label{L:T1} Let $\g$ be a simple Lie algebra and let $k$ be a positive integer.
Assume that one of the following holds: (1) $k\ge3$; (2) $k=2$ and $\g$ is non simply laced.
Then the group homomorphism $\varphi$ in \eqref{Eq:phi} is injective.
\end{lemma}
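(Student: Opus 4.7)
The plan is to reduce to Lemma \ref{L:inj0} and then invoke Lemma \ref{L:minnorm}. By Lemma \ref{L:inj0}, the injectivity of $\varphi$ follows as soon as the action of $\Aut(\Delta_\g)$ on $Q/kQ_L$ is faithful, so I will take an arbitrary $g\in\Aut(\Delta_\g)$ that acts trivially on $Q/kQ_L$ and argue that $g=\mathrm{id}$ in $O(\R\Delta_\g)$. In both cases, the key observation is that $g$ preserves $\Delta_\g$ \emph{and} fixes each coset $\alpha+kQ_L$, so $g(\alpha)\in(\alpha+kQ_L)\cap\Delta_\g$ for every root $\alpha$ under consideration.

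In case (1), $k\ge 3$: the normalization $|\theta|^2=2$ for the highest root puts us in the setting of Lemma \ref{L:minnorm}(1), which gives $(\alpha+kQ_L)\cap\Delta_\g=\{\alpha\}$ for every $\alpha\in\Delta_\g$. Thus $g(\alpha)=\alpha$ for all $\alpha\in\Delta_\g$. Since $\Delta_\g$ spans $\R\Delta_\g$ and $g$ is linear, $g=\mathrm{id}$. In case (2), $k=2$ and $\g$ non simply laced: I restrict the argument to short roots, applying Lemma \ref{L:minnorm}(2) to deduce $(\alpha+2Q_L)\cap\Delta_\g=\{\alpha\}$ for every short root $\alpha$, so $g$ fixes every short root. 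By Lemma \ref{L:auts}(1), the short roots form a root system of full rank in $\R\Delta_\g$; hence they span the ambient space and again $g=\mathrm{id}$.

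There is no serious obstacle here, as the claim is essentially a direct packaging of Lemmas \ref{L:inj0}, \ref{L:minnorm} and \ref{L:auts}(1). The only subtlety worth flagging is that in case (2) one genuinely has to restrict to short roots: for a long root $\alpha$ one may have $-\alpha\in\alpha+2Q_L$, so $(\alpha+2Q_L)\cap\Delta_\g$ need not be a singleton, and the naive argument of case (1) fails for long roots when $k=2$. This is precisely why Lemma \ref{L:minnorm}(2) is stated only for short roots, and why the appeal to Lemma \ref{L:auts}(1) (to recover spanning from the short roots alone) is needed to close case (2).
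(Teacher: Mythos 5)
Your proof is correct and follows essentially the same route as the paper: reduce via Lemma \ref{L:inj0} to faithfulness of the $\Aut(\Delta_\g)$-action on $Q/kQ_L$, apply Lemma \ref{L:minnorm} to conclude that a trivially-acting $g$ fixes every root (resp.\ every short root), and finish by noting these span $\R\Delta_\g$. Your extra remarks (citing Lemma \ref{L:auts}(1) for the spanning of short roots, and noting that $-\alpha\in\alpha+2Q_L$ for a long root $\alpha$ when $k=2$) are accurate and consistent with the paper's argument.
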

\begin{proof} By Lemma \ref{L:inj0}, it suffices to check that $\Aut(\Delta_\g)$ acts faithfully on $Q/kQ_L$.
If (1) holds (resp. (2) holds), then, by Lemma \ref{L:minnorm}, for a root $\alpha$ (resp. for a short root $\alpha$), $(\alpha+kQ_L)\cap\Delta_\g=\{\alpha\}$.
Let $g\in\Aut(\Delta_\g)$ which acts trivially on $Q/kQ_L$.
Then $g$ fixes all roots in $\Delta_\g$ (resp. all short roots in $\Delta_\g$), and $g=id$ since $\R\Delta_\g$ is spanned by roots (resp. short roots) if (1) holds (resp. (2) holds).
\end{proof}
The following is the main theorem of this article.

\begin{theorem}\label{Thm:main2} Let $\g$ be a simple Lie algebra and let $k$ be a positive integer.
Assume that one of the following holds: (1) $k\ge3$; (2) $k=2$ and the type of $\g$ is non simply laced.
Then $\Aut(K(\g,k))\cong \Aut(\Delta_\g)$.
\end{theorem}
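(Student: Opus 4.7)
The plan is to combine the injection $\varphi : \Aut(\Delta_\g) \hookrightarrow \Aut(K(\g,k))$ already furnished by Lemma \ref{L:T1} with a reverse injection obtained by letting $\Aut(K(\g,k))$ act on simple currents. Since $\Aut(\Delta_\g)$ is finite, it suffices to produce an injective homomorphism $\psi : \Aut(K(\g,k)) \to \Aut(\Delta_\g)$; the composition with $\varphi$ will then automatically be an isomorphism.

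For the construction of $\psi$, fix $\sigma \in \Aut(K(\g,k))$ and consider its induced action on $\irr(K(\g,k))_{sc}$. Under hypotheses (1) or (2) we have $(X_n,k)\neq(E_8,2)$, so by Proposition \ref{simplecurrent} this set is parametrized by $Q/kQ_L$; moreover, $\sigma$ acts as an automorphism of the abelian group $\irr(K(\g,k))_{sc}$ (with fusion equal to addition in $Q/kQ_L$ by Proposition \ref{Cor:fusion}) and preserves conformal weights by \eqref{Eq:rhoconf}. Set $\Delta' = \Delta_\g$ in case (1) and $\Delta' = \Delta_s$ in case (2). By Corollary \ref{C}, $\sigma$ preserves the subset $\{M^{0,\alpha+kQ_L} : \alpha \in \Delta'\}$, and by Lemma \ref{L:minnorm} the correspondence $\alpha \mapsto \alpha+kQ_L$ is a bijection between $\Delta'$ and this subset of cosets. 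Hence $\sigma$ defines a permutation $\tilde{\sigma}$ of $\Delta'$.

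The next step is to upgrade $\tilde{\sigma}$ to an element of $\Aut(\Delta')$. Because $\sigma$ preserves fusion and conformal weights, for $\alpha_1,\alpha_2 \in \Delta'$ the conformal weight of $M^{0,(\alpha_1+\alpha_2)+kQ_L}$ equals that of $M^{0,(\tilde{\sigma}(\alpha_1)+\tilde{\sigma}(\alpha_2))+kQ_L}$, and Proposition \ref{P:lowest}\,(1) translates this into $\langle\alpha_1,\alpha_2\rangle \equiv \langle\tilde{\sigma}(\alpha_1),\tilde{\sigma}(\alpha_2)\rangle \pmod{k}$; also $\tilde{\sigma}(-\alpha) = -\tilde{\sigma}(\alpha)$, since fusion inverses are preserved. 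Lemma \ref{L:innerZ} then upgrades these congruences to exact equalities of inner products, and Lemma \ref{L:symdelta} (applied with $\Delta = \Delta'$) extends $\tilde{\sigma}$ uniquely to an element of $\Aut(\Delta')$. In case (1) this gives $\psi(\sigma) \in \Aut(\Delta_\g)$ directly, and when $\g$ is not of type $C_4$ in case (2) we still have $\Aut(\Delta') = \Aut(\Delta_s) \cong \Aut(\Delta_\g)$ by Lemma \ref{L:auts}. The main anticipated obstacle is the outlier $\g = C_4$, $k=2$, where $\Aut(\Delta_s) = \Aut(D_4)$ strictly contains $\Aut(\Delta_\g) = \Aut(C_4)$ with index $3$; here the plan is to exploit the additional constraint that Corollary \ref{C} with $t=1$ forces $\sigma$ to permute the (four) cosets $\{[2e_i]\} \subset Q/2Q_L$ containing long roots, which should exclude the triality component of $\Aut(D_4)$ and cut the image of $\psi$ down to $\Aut(C_4)$.

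Finally, injectivity of $\psi$ is routine. If $\tilde{\sigma}$ is trivial on $\Delta'$, then $\sigma$ fixes every coset $\alpha+kQ_L$ for $\alpha \in \Delta'$. In every case under consideration, $\Delta'$ generates $Q$ as an abelian group (all roots span $Q$ in case (1); a brief type-by-type check shows that short roots still span $Q$ for each non simply laced $\g$ in case (2)), so by the group-automorphism property, $\sigma$ acts trivially on the whole of $\irr(K(\g,k))_{sc}$. Lemma \ref{L:inj} then forces $\sigma = \id$, completing the construction of $\psi$ and hence the proof.
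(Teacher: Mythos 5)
Your proposal is correct and follows essentially the same route as the paper's proof: an injection $\varphi$ from Lemma \ref{L:T1}, a reverse map built from the action on simple currents via Corollary \ref{C}, Lemma \ref{L:minnorm}, Proposition \ref{P:lowest}\,(1), and Lemmas \ref{L:innerZ} and \ref{L:symdelta}, injectivity via Lemma \ref{L:inj}, and the same use of the long-root cosets (Corollary \ref{C} with $t=1$) to handle the $C_4$, $k=2$ outlier. No substantive differences.
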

\begin{proof}
By Lemma \ref{L:T1} and $|\Aut(\Delta_\g)|<\infty$, it suffices to construct an injective group homomorphism from $\Aut(K(\g,k))$ to $\Aut(\Delta_\g)$.

Set $\Delta=\Delta_\g$ if (1) holds.
Let $\Delta$ be the set of all short roots if (2) holds; in this case, $\Delta$ is a (scaled) simply laced root system (see Lemma \ref{L:auts} (2)).
Recall from Proposition \ref{simplecurrent} that 
$$\Irr(K(\g,k))_{sc}=\{M^{0,\beta+kQ_L}\mid \beta+kQ_L\in Q/kQ_L\}.$$
By Corollary \ref{C}, $\Aut(K(\g,k))$ preserves $\{M^{0,\alpha+kQ_L}\mid \alpha\in\Delta\}$.

Let $\sigma\in \Aut(K(\g,k))$.
By Lemma \ref{L:minnorm}, for any $\alpha\in\Delta$, $(\alpha+kQ_L)\cap\Delta=\{\alpha\}$;
we define $\tilde\sigma(\alpha)\in\Delta$ so that $M^{0,\tilde\sigma(\alpha)+kQ_L}\cong M^{0,\alpha+kQ_L}\circ \sigma^{-1}$.
Then $\tilde\sigma$ defines a permutation on $\Delta$, i.e., $\tilde{\sigma}\in \Sym\Delta$, the permutation group on $\Delta$.
By Proposition \ref{Cor:fusion}, for $\alpha\in\Delta$, $$M^{0,\alpha+kQ_L}\boxtimes M^{0,-\alpha+kQ_L}=M^{0,kQ_L}.$$
Applying the $\sigma^{-1}$-conjugation to both sides, we obtain $M^{0,\tilde\sigma(\alpha)+kQ_L}\boxtimes M^{0,\tilde\sigma(-\alpha)+kQ_L}=M^{0,kQ_L}$ by \eqref{Eq:rhoconf}.
On the other hand, $M^{0,\tilde\sigma(\alpha)+kQ_L}\boxtimes M^{0,-\tilde\sigma(\alpha)+kQ_L}=M^{0,kQ_L}.$
Since $\Irr(K(\g,k))_{sc}$ has a group structure under the fusion product, we obtain $M^{0,\tilde\sigma(-\alpha)+kQ_L}\cong M^{0,-\tilde\sigma(\alpha)+kQ_L}$, and 
\begin{equation}
\tilde\sigma(-\alpha)=-\tilde\sigma(\alpha).\label{Eq:sigma-}
\end{equation}
By \eqref{Eq:rhoconf} and Proposition \ref{Cor:fusion}, for $\alpha_1,\alpha_2\in\Delta$
$$\rho(M^{0,\alpha_1+\alpha_2+kQ_L})=\rho((M^{0,\alpha_1+kQ_L}\boxtimes M^{\alpha_2+kQ_L})\circ\sigma^{-1})=\rho(M^{0,\tilde\sigma(\alpha_1)+\tilde\sigma(\alpha_2)+kQ_L}).$$
By Proposition \ref{P:lowest} (1), we obtain for $i=1,2$,
$$\dfrac{|\alpha_i|^2}{2k}\equiv\dfrac{|\tilde\sigma(\alpha_i)|^2}{2k},\quad \dfrac{|\alpha_1+\alpha_2|^2}{2k}\equiv\dfrac{|\tilde\sigma(\alpha_1)+\tilde\sigma(\alpha_2)|^2}{2k}\pmod\Z.$$
Then we obtain 
\begin{equation}
\dfrac{\langle \alpha_1,\alpha_2\rangle}{k}\equiv\dfrac{\langle \tilde\sigma(\alpha_1),\tilde\sigma(\alpha_2)\rangle}{k}\pmod\Z.\label{Eq:mod}
\end{equation}
By \eqref{Eq:sigma-} and \eqref{Eq:mod}, we can apply Lemma \ref{L:innerZ} to $\tilde{\sigma}$; for any $\alpha_1,\alpha_2\in\Delta$,
$$\langle \alpha_1,\alpha_2\rangle=\langle\tilde\sigma(\alpha_1),\tilde\sigma(\alpha_2)\rangle.$$
By Lemma \ref{L:symdelta}, 
$\tilde\sigma$ can be extended to an element in $\Aut(\Delta)$.
Thus we obtain the group homomorphism
$$\nu:\Aut(K(\g,k))\to\Aut(\Delta),\quad \sigma\mapsto\tilde\sigma.$$
Since $\Delta$ generates $Q$, $\ker\nu$ acts trivially on $\irr(K(\g,k))_{sc}$ by Proposition \ref{simplecurrent}.
By Lemma \ref{L:inj}, $\ker\nu=\{id\}$, and $\nu$ is injective.

If (1) holds, then $\Delta=\Delta_\g$, and $\Aut(\Delta)=\Aut(\Delta_\g)$.
If (2) holds and $\g$ is not of type $C_4$, then by Lemma \ref{L:auts}, $\Aut(\Delta)=\Aut(\Delta_\g)$.
For $k=2$ and $\g$ is of type $C_4$, 
$\im\nu$ preserves $\{\beta+kQ_L\in Q/kQ_L\mid (\beta+kQ_L)\cap\Delta_\g\neq\emptyset\}$ by Corollary \ref{C} 
and it also preserves  $\Delta_\g$.
Hence $\im\nu\subset\Aut(\Delta_\g)$.

Thus, in all cases, we obtain an injective group homomorphism from $\Aut(K(\g,k))$ to $\Aut(\Delta_\g)$.
\end{proof}

Combining Theorems \ref{T:LSY} and \ref{Thm:main2} and Remark \ref{R:level1}, we obtain the following corollary:

\begin{corollary} Let $\g$ be a simple Lie algebra and $k\in\Z_{>0}$.
Then $\Aut(K(\g,k))$ is larger than $\im\varphi$ if and only if $\g$ is of type $E_8$ and $k=2$.
\end{corollary}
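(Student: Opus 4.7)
The plan is to partition the pairs $(\g,k)$ into regimes governed by results already established in the paper and, in each regime, to conclude whether $\im\varphi$ exhausts $\Aut(K(\g,k))$ or not, by identifying $|\ker\varphi|$ and $|\Aut(K(\g,k))|$ and comparing orders.

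For $k=1$, Remark \ref{R:level1} already records $\Aut(K(\g,1))=\im\varphi$ for every simple $\g$, so there is nothing new to check. For the cases covered by Theorem \ref{Thm:main2} --- namely $k\ge3$, or $k=2$ with $\g$ non simply laced --- Lemma \ref{L:T1} gives $\ker\varphi=\{id\}$, so $\im\varphi\subseteq\Aut(K(\g,k))$ is a subgroup of order $|\Aut(\Delta_\g)|$, while Theorem \ref{Thm:main2} gives $|\Aut(K(\g,k))|=|\Aut(\Delta_\g)|$; the inclusion is therefore an equality since all groups in sight are finite.

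The remaining cases are $k=2$ with $\g$ simply laced. Theorem \ref{T:LSY} (and the remarks just after its statement) tell us that $\ker\varphi=\{\pm1\}$ for all simply laced $\g$ at level $2$, so $|\im\varphi|=|\Aut(\Delta_\g)|/2$. When $\g$ is not of type $E_8$, Theorem \ref{T:LSY} gives $\Aut(K(\g,2))\cong\Aut(\Delta_\g)/\{\pm1\}$, so both sides of $\im\varphi\subseteq\Aut(K(\g,2))$ have order $|\Aut(\Delta_\g)|/2$, forcing equality. When $\g$ is of type $E_8$, the same theorem gives $\Aut(K(E_8,2))\cong Sp_8(2)$, and since $|W(E_8)|/2$ is strictly less than $|Sp_8(2)|$, we obtain $\im\varphi\subsetneq\Aut(K(E_8,2))$; so the inclusion is strict in exactly this case.

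Combining the regimes yields the equivalence in the corollary. There is no substantive obstacle here: the result is pure bookkeeping on top of Theorem \ref{Thm:main2}, Theorem \ref{T:LSY}, Lemma \ref{L:T1} and Remark \ref{R:level1}, and the only care needed is to track both $|\ker\varphi|$ and $|\Aut(K(\g,k))|$ consistently as one moves between the four regimes (level $1$; the non-exceptional cases of Theorem \ref{Thm:main2}; simply laced non-$E_8$ at level $2$; and $E_8$ at level $2$).
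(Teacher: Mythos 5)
Your proposal is correct and follows essentially the same route as the paper, which proves this corollary simply by combining Theorem \ref{T:LSY}, Theorem \ref{Thm:main2} and Remark \ref{R:level1}; you have merely made the order-counting in each regime (including the injectivity of $\varphi$ from Lemma \ref{L:T1} and the comparison $|\Aut(\Delta_{E_8})|/2<|Sp_8(2)|$) explicit. No gaps.
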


\vskip.25cm
\paragraph{\bf Acknowledgement.} The authors thank Professor Hiroshi Yamauchi  and the referee for useful comments.

\end{document}